\newtheorem{theorem}{Theorem}[section]
\newtheorem{lemma}[theorem]{Lemma}
\newtheorem{corollary}[theorem]{Corollary}
\newtheorem{proposition}[theorem]{Proposition}
\newtheorem{conjecture}[theorem]{Conjecture}
\newtheorem{definition}[theorem]{Definition}
\newenvironment{proof}{\normalsize {\sc Proof}.}{{\hfill $\Box$%
 \hskip - \parfillskip\bigskip}}
\newcommand{\Syl}{\mathop{\rm Syl}\nolimits}
\newcommand{\Irr}{\mathop{\rm Irr}\nolimits}
\newcommand{\Aut}{\mathop{\rm Aut}\nolimits}
\newcommand{\Out}{\mathop{\rm Out}\nolimits}
\def\CO{{\mathcal{O}}}
\newcommand{\bG} {{\bf G}}
\newcommand{\bH} {{\bf H}}
\newcommand{\bL} {{\bf L}}
\newcommand{\bM} {{\bf M}}
\newcommand{\bT} {{\bf T}}
\newcommand{\bU} {{\bf U}}
\newcommand{\bX} {{\bf X}}
\newcommand{\bY} {{\bf Y}}
\def\bigcp{\mathop{\mathchoice 
 {\hbox{\sf\Large\lower 0.1\baselineskip\hbox{Y}}}%
 {\hbox{\sf\large\lower 0.1\baselineskip\hbox{Y}}}%
 {\hbox{\sf\normalsize\lower 0.1\baselineskip\hbox{Y}}}%
 {\hbox{\sf\tiny\lower 0.1\baselineskip\hbox{Y}}}%
}}
\def\bigtimes{\mathop{\mathchoice 
 {\hbox{\sf\Large\lower 0.1\baselineskip\hbox{X}}}%
 {\hbox{\sf\large\lower 0.1\baselineskip\hbox{X}}}%
 {\hbox{\sf\normalsize\lower 0.1\baselineskip\hbox{X}}}%
 {\hbox{\sf\tiny\lower 0.1\baselineskip\hbox{X}}}%
}}
\def\Sym(#1){\mathop{\rm Sym}(#1)}
\def\Sym(#1){S_{#1}}
\def\diag(#1){\mathop{\rm diag}(#1)}
\newenvironment{enumerate*}{%
 \begin{enumerate}%
 }%
 {\end{enumerate}}
\begin{document}

\title{$2$-blocks with abelian defect groups}

\author{Charles W. Eaton, Radha Kessar, Burkhard K\"{u}lshammer and\\ Benjamin Sambale}

\date{22nd May 2013}
\maketitle


\begin{abstract}
We give a classification, up to Morita equivalence, of $2$-blocks of quasi-simple groups with abelian defect groups. As a consequence, we show that Donovan's conjecture holds for elementary abelian $2$-groups, and that the entries of the Cartan matrices are bounded in terms of the defect for arbitrary abelian $2$-groups. We also show that a block with defect groups of the form $C_{2^m} \times C_{2^m}$ for $m \geq 2$ has one of two Morita equivalence types and hence is Morita equivalent to the Brauer correspondent block of the normaliser of a defect group. This completes the analysis of the Morita equivalence types of $2$-blocks with abelian defect groups of rank $2$, from which we conclude that Donovan's conjecture holds for such $2$-groups. A further application is the completion of the determination of the number of irreducible characters in a block with abelian defect groups of order $16$. The proof uses the classification of finite simple groups.
\end{abstract}


\section{Introduction}

Let $k$ be an algebraically closed field of prime characteristic $\ell$, and let $\CO$ be a discrete valuation ring with residue field $k$. Let $G$
be a finite group, and let $B$ be a block of the group algebra $\CO G$ with
defect group $D$. Assume that $\CO$ contains a primitive $|D|$-th root of unity.

The motivation for this paper is Donovan's conjecture, which states that for a fixed $\ell$-group $D$, there should only be a finite number of Morita equivalence classes of blocks with defect groups isomorphic to $D$. This conjecture is stated for blocks with respect to $k$, but it is also expected to hold for blocks with respect to $\CO$ (however, there are key results used in reduction arguments for the conjecture that at present are only known for $k$).

The main result   is that every $2$-block of a quasi-simple group with an abelian defect group is either one of a short list of exceptional cases or is Morita equivalent  over  $\CO$ to either a block covered by a nilpotent block, or to a tensor product of a nilpotent block with a block with Klein $4$-defect groups (see Theorem \ref{ab-2-str}). Blocks covered by nilpotent blocks are treated in~\cite{pu10}, where it is shown that they are Morita equivalent to their Brauer correspondent in the normaliser of a defect group. The Morita equivalences are achieved through the Bonnaf\'e-Rouquier correspondence.

The above may be viewed as being in the spirit of Walter's classification of simple groups with abelian Sylow $2$-subgroups, and it is hoped that it will eventually be used to tackle Donovan's conjecture for $2$-blocks with arbitrary abelian defect groups. We begin here with some cases which we may tackle with tools already at our disposal.

We prove that Donovan's conjecture holds for $2$-blocks with elementary abelian defect groups (Theorem \ref{elabDonovan}). In this case we are restricted to blocks defined over $k$ because we are reliant on the results of~\cite{ku95}.

A conjecture of Brauer (from his Problem $22$) represents a weak version of Donovan's conjecture. It states that for a given $\ell$-group $D$, there is a bound on the entries of the Cartan matrix of a block with defect groups isomorphic to $D$. This conjecture has been reduced to quasi-simple groups by D\"uvel in~\cite{du04}. We use our main result to show that the conjecture holds for all abelian $2$-groups (Theorem  \ref{weak-donovan}).

We also consider the case that $D$ is abelian of rank $2$, so that $D$ is isomorphic to a direct
product $C_{2^m} \times C_{2^n}$ of two cyclic subgroups $C_{2^m}$ and
$C_{2^n}$. If $m \neq n$, then every block with defect group $D$ is necessarily nilpotent. The case $m=n=1$ is the Klein $4$-case, which is already well known by~\cite{er82}. We show the following:

\begin{theorem}
\label{homocyclictheorem} Let $G$ be a finite group and $B$ a block of $\CO G$ with defect group $D$. Suppose that $D \cong C_{2^m} \times C_{2^m}$ for some $m \geq 2$. Then $B$ is Morita equivalent to either $\CO D$ or $\CO (D \rtimes C_3)$.
\end{theorem}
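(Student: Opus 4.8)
The plan is to reduce Theorem \ref{homocyclictheorem} to the case of quasi-simple groups via the standard reduction machinery for Donovan-type statements, and then invoke the classification of $2$-blocks of quasi-simple groups with abelian defect groups (Theorem \ref{ab-2-str}). First I would recall that for a block $B$ of $\CO G$ with abelian defect group $D$, a result in the style of K\"ulshammer--Puig (and its extensions) allows passage to a situation where $G$ is generated by $D$ and the components of a suitable normal subgroup. More precisely, using the Fong--Reynolds reduction one may assume $B$ covers a $G$-stable block of a normal subgroup; passing to quotients by the largest normal $\ell'$-subgroup and central extensions, one is reduced to the case where $G$ has a unique component (or a product of $G$-conjugate components permuted transitively), $O_{\ell'}(G) = Z(G)$ is cyclic, and $C_G(E(G))$ has no contribution. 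At this point the block $B$ is (Morita) related to a block of a central extension of a direct product of simple groups wreathed by a permutation action, and the defect group $D \cong C_{2^m} \times C_{2^m}$ forces rather rigid possibilities for that structure.

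Next I would analyse the possible quasi-simple pieces. Since $D$ has rank $2$ and is homocyclic of exponent $2^m$ with $m \geq 2$, a product of two or more components would force a defect group that is a direct product of at least two nontrivial pieces each of rank $\geq 1$; the homocyclic condition then severely constrains this, and a short case check (using that a wreathed product $D_0 \wr C_2$ is never homocyclic of rank $2$ unless $D_0$ is trivial or $C_2$, the latter excluded by $m \geq 2$) rules out the genuinely wreathed case. So we are left with $B$ (Morita) coming from a single quasi-simple group $H$ with $Z(H)$ cyclic of odd order and a $2$-block of $H$ having defect group contained in $D$, with the full defect group $D$ recovered after adjoining the relevant part of $O_{\ell'}$ and outer automorphisms. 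Now Theorem \ref{ab-2-str} lists the Morita equivalence types: either $B$ is Morita equivalent to a block covered by a nilpotent block (hence, by~\cite{pu10}, to the Brauer correspondent $\CO(D \rtimes E)$ in $N_G(D)$ for the inertial quotient $E$), or to a tensor product of a nilpotent block with a Klein-four block, or it is one of the finitely many exceptional cases. For defect group $C_{2^m}\times C_{2^m}$ with $m\geq 2$ the Klein-four tensor factor cannot occur (a Klein-four block has defect group $C_2\times C_2$, and tensoring with a nilpotent block with defect group $Q$ gives defect group $Q\times C_2\times C_2$, which is not homocyclic of rank $2$ and exponent $\geq 4$), and one checks the exceptional list in Theorem \ref{ab-2-str} contains no block with defect group $C_{2^m}\times C_{2^m}$, $m\geq 2$. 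Hence $B$ is Morita equivalent to its Brauer correspondent $b$ in $N_G(D)$.

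Finally I would determine $b$ explicitly. Since $D$ is abelian, $b$ has normal defect group $D$ and is (Morita) equivalent to a twisted group algebra of $D \rtimes E$, where $E$ is the inertial quotient, a $2'$-subgroup of $\Aut(D) = \GL_2(\mathbb{Z}/2^m)$ acting freely on $D \setminus \{1\}$ in the sense required (no nontrivial fixed points for nontrivial elements, since $D$ is a defect group). The automorphism group $\Aut(C_{2^m}\times C_{2^m})$ has order $2^{4m-3}\cdot 3$, so the only odd prime dividing it is $3$ and $|E| \in \{1,3\}$; moreover for $m\geq 2$ an element of order $3$ acting on $C_{2^m}\times C_{2^m}$ without nontrivial fixed points is unique up to conjugacy in $\Aut(D)$ (it reduces to the fixed-point-free order-$3$ action on the Frattini quotient $C_2\times C_2$, and lifts uniquely), and the relevant second cohomology group $H^2(E, k^\times)$ vanishes since $E$ is cyclic, so there is no twist. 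Therefore $b$ — and hence $B$ — is Morita equivalent to $\CO D$ (if $E=1$) or to $\CO(D\rtimes C_3)$ (if $E=C_3$), which is exactly the assertion.

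The main obstacle, I expect, is the reduction step: carefully pushing a block with abelian defect group down to the quasi-simple case requires invoking the correct form of the reduction theorems (stable equivalences or Morita equivalences under Fong--Reynolds, quotients by $O_{\ell'}$, and central extensions) and verifying that the homocyclic rank-$2$ hypothesis is inherited and that the wreathed/product-of-components configurations cannot arise — this combinatorial bookkeeping, rather than the final identification of $N_G(D)$-blocks, is where the real work lies.
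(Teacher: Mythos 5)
Your overall strategy coincides with the paper's: reduce to a quasi-simple group with centre of odd order, feed the result into Theorem \ref{ab-2-str}, observe that cases (i), (ii) and (iv) are incompatible with a homocyclic defect group of exponent at least $4$, and identify the remaining (nilpotent-covered) case with $\CO D$ or $\CO(D\rtimes C_3)$ via Puig's theorem and the structure of blocks with normal abelian defect group. That last identification, and the elimination of several components (rank considerations plus $O^2(G)=G$), are essentially as in the paper.

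The genuine gap is the reduction itself, which you dismiss as ``standard reduction machinery'' and defer to the closing sentence. No standard reduction (Fong--Reynolds, second Fong, K\"ulshammer--Puig as in Proposition \ref{kp}, or D\"uvel-type reductions) lands you on the quasi-simple layer: after those steps one is left with a quasi-primitive block of a group $G$ whose layer $L=E(G)$ is quasi-simple but where $G/LZ(G)$ can be a nontrivial solvable group of outer-automorphism type, and there is no automatic Morita equivalence between $B$ and the covered block of $L$. (The splitting result of Proposition \ref{KK} that makes such a passage painless in the elementary abelian case is not available here, since $D\cong C_{2^m}\times C_{2^m}$ with $m\geq 2$ need not split off the piece coming from $L$.) This is precisely where the paper's proof does its real work, inside a minimal counterexample argument: a normal subgroup of index $2$ is excluded because the covered block would have defect group $C_{2^m}\times C_{2^{m-1}}$, whose automorphism group is a $2$-group, making that block nilpotent and allowing Proposition \ref{kp}; and a normal subgroup $N$ of odd prime index $w$ is excluded by a Clifford-theoretic count of irreducible Brauer characters, which relies on the perfect isometry between $B$ and its Brauer correspondent (so that $l(B)$, $l(B_N)\in\{1,3\}$ and $k=(|D|+8)/3$ in the non-nilpotent case), distinguishing the subcases $w>3$, $w=3$, stable versus permuted Brauer characters, and, when $B$ is not the unique block over $B_N$, invoking the Hida--Koshitani criterion to get a Morita equivalence with $B_N$ and contradict minimality. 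None of this is supplied, or even sketched, by your proposal; without it the claim that one may assume $G$ quasi-simple is unsupported, and the proof is incomplete at its most labour-intensive point. Your secondary claim that a central product of two components is ruled out ``because $D_0\wr C_2$ is never homocyclic'' is also not quite the right argument: the paper rules out two transitively permuted components using $O^2(G)=G$ (the kernel of the permutation action would have index $2$), not a wreath-product defect group computation.
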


By the remarks above, this completes the analysis of $2$-blocks with abelian defect groups of rank $2$, including the verification of Donovan's conjecture for these groups.

We are also able to show that Donovan's conjecture holds for groups of the form $C_{2^m} \times C_{2^m} \times C_2$ for $m \geq 3$ (Theorem \ref{2m2m2}).

Finally, we complete the determination of the number of irreducible characters and irreducible Brauer characters in a block with abelian defect groups of order $16$ (Theorem \ref {eldef16}). This was completed modulo one case in~\cite{ks13}.

The structure of the paper is as follows: In Section \ref{background} we collect together some results which we will use later. In Section \ref{Leviabelian2Sylow} we give an analysis of finite groups of quotients of Levi subgroups with abelian Sylow $2$-subgroups which will be important in later arguments.

As mentioned above, the main result uses the Bonnaf\'e-Rouquier Morita equivalence. However, this equivalence only applies to finite groups of Lie type in the strict sense. In Section \ref{br} we show that the Bonnaf\'e-Rouquier Morita equivalence induces Morita equivalences on certain quotient groups, so that it applies to the associated simple group.

Section \ref{reductive groups} contains the analysis of $2$-blocks of finite groups of Lie type defined over fields of odd characteristic. The structure is presented in more detail than in the statement of the main result. In Section~\ref{main theorem} we prove the main theorem. In Section~\ref{homocyclic} we consider blocks with homocyclic defect groups and prove Theorem \ref{homocyclictheorem}. We prove Donovan's conjecture for elementary abelian $2$-groups in Section~\ref{elabsection}.

In Section \ref{weakdonovansection} we prove that the weak version of Donovan's conjecture holds for abelian $2$-groups. In Section \ref{elabinvariants} we treat blocks with elementary abelian defect groups of order $16$ and inertial index $15$, so completing the work of~\cite{ks13}. Finally in Section~\ref{homocyclicplusabit} we prove that Donovan's conjecture holds for groups $C_{2^m} \times C_{2^m} \times C_2$ for $m \geq 3$.


\section{Background material}
\label{background}

For $G$ a   finite group, we  will use the   term  ``$\ell$-block of  $G$" to denote either a block of $\CO G $  or $ kG$;  the base ring will be specified   as needed.

We will make frequent use of the following, which apply for any prime $\ell$:

\begin{proposition}[\cite{Wa94}]
\label{watanabe}
Let $B$ be an $\ell$-block of a finite group $G$ and let $Z \leq Z(G)$ be an $\ell$-subgroup. Let $\bar B$ be the unique block of $G/Z$ corresponding to $B$. Then $B$ is nilpotent if and only if $\bar B$ is nilpotent.
\end{proposition}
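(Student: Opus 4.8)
The plan is to prove Proposition \ref{watanabe} by relating the block structure of $\CO G$ to that of $\CO(G/Z)$ via the canonical surjection $G \to G/Z$, and then comparing the local data (Brauer pairs, fusion, inertial quotients) that detect nilpotency. Recall that since $Z$ is a central $\ell$-subgroup, the natural algebra map $\CO G \to \CO(G/Z)$ sends the block idempotent of $B$ to a (nonzero) sum of block idempotents of $\CO(G/Z)$; in fact, because $Z$ acts trivially on every module in the image and $Z$ is an $\ell$-group, there is a unique block $\bar B$ of $G/Z$ corresponding to $B$, the defect groups of $\bar B$ are exactly $D/Z$ for $D$ a defect group of $B$ (using $Z \leq D$, which holds as $Z$ is a central $\ell$-subgroup), and the Brauer categories of $B$ and $\bar B$ are naturally isomorphic. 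The key reference point is that nilpotency of a block, by Brou\'e--Puig, is equivalent to the statement that for every $B$-Brauer pair $(Q,e)$ the quotient $N_G(Q,e)/C_G(Q)$ is an $\ell$-group.

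First I would recall the standard correspondence: for a central $\ell$-subgroup $Z$, inflation along $G \to G/Z$ gives a bijection between blocks of $kG$ (equivalently $\CO G$) with $Z$ in their kernel and all blocks of $k(G/Z)$, and this bijection preserves defect groups modulo $Z$; moreover $B$ and $\bar B$ have the same $\ell'$-representation theory above $Z$. Second, I would set up the correspondence of subpairs: if $\bar Q = Q/Z$ with $Z \leq Q \leq D$, then $C_{G/Z}(\bar Q)$ contains $C_G(Q)/Z$, but more to the point $N_{G/Z}(\bar Q) = N_G(Q)/Z$ since $Z$ is central, and the $B$-Brauer pairs $(Q,e)$ with $Z \leq Q$ correspond bijectively to the $\bar B$-Brauer pairs $(\bar Q, \bar e)$; every $B$-Brauer pair is $G$-conjugate to one containing $Z$ because $Z$ is central and hence contained in every defect group. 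Third, under this correspondence the inertial quotients match up to the central piece: $N_G(Q,e)/C_G(Q)Z$ is isomorphic to a subquotient comparison giving that $N_G(Q,e)/QC_G(Q)$ is an $\ell$-group if and only if $N_{G/Z}(\bar Q, \bar e)/\bar Q\,C_{G/Z}(\bar Q)$ is. Since $Z$ is an $\ell$-group, quotienting by $Z$ changes none of these groups modulo $\ell$-parts, so the ``all local inertial quotients are $\ell$-groups'' condition transfers in both directions.

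Putting these together: $B$ is nilpotent iff every $B$-Brauer pair $(Q,e)$ has $N_G(Q,e)/C_G(Q)$ an $\ell$-group; by the subpair correspondence it suffices to check pairs with $Z \leq Q$; for such a pair the image in $G/Z$ is the $\bar B$-Brauer pair $(\bar Q,\bar e)$ and $N_G(Q,e)/C_G(Q)$ has the same $\ell'$-part as $N_{G/Z}(\bar Q,\bar e)/C_{G/Z}(\bar Q)$ (the kernel and cokernel of the comparison map being $\ell$-groups, since they are built from $Z$ and from $Z$-fixed-point subtleties which are again $\ell$-groups); hence one is an $\ell$-group iff the other is, and the equivalence of nilpotency follows. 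The main obstacle I anticipate is the careful bookkeeping in the third step, namely verifying that passing to $G/Z$ induces a genuine isomorphism (not merely a correspondence) of Brauer categories so that the fusion system of $B$ is literally identified with that of $\bar B$; one must check that a block $e$ of $C_G(Q)$ and its image correspond correctly and that centralizers behave well, using that $C_G(Q)/Z$ need not equal $C_{G/Z}(\bar Q)$ but the discrepancy is an $\ell$-group (it lies in a section controlled by $Q/Z$ acting), which is exactly what one needs since nilpotency is insensitive to $\ell$-local distortions of this kind. Once that identification is in place, the equivalence is immediate from the Brou\'e--Puig characterization, and indeed this is the content of Watanabe's result in~\cite{Wa94} which we may simply cite.
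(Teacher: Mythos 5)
The first thing to note is that the paper does not prove this statement at all: Proposition \ref{watanabe} is simply quoted from Watanabe \cite{Wa94}, so there is no internal argument to compare yours with. Your sketch is an attempt at an independent proof via the Brou\'e--Puig subpair criterion, and that is a reasonable route in principle, but as written it has a false step and leaves the central technical point unproved.

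Concretely: (a) you justify restricting to $B$-Brauer pairs $(Q,e)$ with $Z \leq Q$ by claiming that every $B$-Brauer pair is $G$-conjugate to one containing $Z$; this is false, since $Z$ is central, so $Z \leq Q^g$ if and only if $Z \leq Q$, and a pair whose first component does not contain $Z$ is never conjugate to one that does. What is true is that every pair is \emph{contained} in a pair whose subgroup contains $Z$, and the sufficiency of checking pairs containing $Z$ requires a genuine argument (for instance: conjugation maps fix $Z$ pointwise, so $Z$ is central in the fusion system of $B$, essential subgroups are centric and hence contain $Z$, and one then invokes Alperin's fusion theorem); none of this appears in the sketch. (b) The heart of the matter --- a bijection $(Q,e) \leftrightarrow (\bar Q, \bar e)$ between $B$-pairs with $Z \leq Q$ and $\bar B$-pairs, compatible with inclusion of pairs and with normalisers, together with the comparison of $N_G(Q,e)/C_G(Q)$ and $N_{G/Z}(\bar Q, \bar e)/C_{G/Z}(\bar Q)$ up to $\ell$-groups --- is asserted rather than proved. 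You correctly observe that $C_{G/Z}(\bar Q)/(C_G(Q)/Z)$ embeds into $\Hom(Q,Z)$ and is therefore an $\ell$-group, but you still have to specify $\bar e$ (it is the unique block of $C_{G/Z}(\bar Q)$ covering the image of $e$, uniqueness coming from the $\ell$-power index), check that $(\bar Q,\bar e)$ is indeed a $\bar B$-pair, that inclusions of pairs are preserved in both directions, and that the stabilisers agree modulo $\ell$-groups. Closing with ``this is the content of Watanabe's result, which we may simply cite'' makes the argument circular as a proof: it stops exactly where the work begins. The outline is salvageable if these steps are carried out in full; otherwise one should do what the paper does and simply cite \cite{Wa94}.
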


\begin{proposition}[\cite{kp}]
\label{kp}
Let $G$ be a finite group and $N \lhd G$. Let $B$ be a block of $\CO G$ with defect group $D$ covering a nilpotent block $b$ of $\CO N$ with defect group $D \cap N$ and stabiliser $H$. 
Then there is a finite group $L$ and $M \lhd L$ such that (i) $M \cong D \cap N$, (ii) $L/M \cong H/N$, 
(iii) there is a subgroup $D_L$ of $L$ with $D_L \cong D$ and $ M \leq D_L$, and (iv) there is
a central extension $\tilde{L}$ of $L$ by an $\ell'$-group, and  a
 block $\tilde{B}$ of  $\CO \tilde L$   which is Morita equivalent to $B$ and has defect group $\tilde{D} \cong D$.
\end{proposition}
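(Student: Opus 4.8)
The statement is essentially the K\"ulshammer--Puig theorem on extensions of nilpotent blocks, packaged together with a Fong--Reynolds reduction, and that is how I would prove it. First I would pass to the inertial group: by the Fong--Reynolds correspondence there is a unique block $B_{H}$ of $\CO H$ covering $b$ with $(B_{H})^{G}=B$, and $B_{H}$ is Morita equivalent to $B$; one may choose the defect groups so that $B$ and $B_{H}$ share the defect group $D$ (with $D\leq H$), and then $b$ still has defect group $P:=D\cap N$ as a block of $\CO N$. Since $H/N$ is unchanged and a composite of Morita equivalences is again a Morita equivalence, it is enough to construct $L,M,\tilde L,\tilde B$ for the pair $(H,B_{H})$; so from now on I assume $b$ is $G$-invariant.

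Next I would apply the K\"ulshammer--Puig structure theorem to the $G$-invariant nilpotent block $b$. As $b$ is nilpotent, a source algebra of $b$ is $\CO P$ (Puig's nilpotent block theorem), and the theorem then produces a finite group $L$ with a normal subgroup $M$, an isomorphism $M\cong P$, an isomorphism $L/M\cong G/N$ (which equals $H/N$), a central extension $1\to Z\to\tilde L\to L\to1$ by an $\ell'$-group $Z$, and a block $\tilde B$ of $\CO\tilde L$ that is source-algebra equivalent --- in particular Morita equivalent --- to $B$. (In the twisted-group-algebra formulation one instead obtains a block $B_{\alpha}$ of $\CO_{\alpha}L$, with $\alpha$ inflated from $L/M$, source-algebra equivalent to $B$; one then writes $\CO_{\alpha}L\cong e_{\lambda}\CO\tilde L$ for a central extension $\tilde L$ of $L$ by a cyclic group $Z=C_{m}$, where $m$ is the order of $[\alpha]\in H^{2}(L/M;\CO^{\times})$ and $\lambda$ is a faithful linear character of $C_{m}$ with associated central idempotent $e_{\lambda}$. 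Here $m$ is prime to $\ell$: by its construction the class $[\alpha]$ is controlled by its image in $H^{2}(L/M;k^{\times})\cong\Hom(H_{2}(L/M,\mathbb Z),k^{\times})$, and this group has no $\ell$-torsion since $k^{\times}$ has none --- in characteristic $\ell$, $x^{\ell}=1$ forces $x=1$.) By construction $L$ contains a subgroup $D_{L}$ with $M\leq D_{L}$ which is the image of a defect group $\tilde D$ of $\tilde B$ under $\tilde L\to L$; since $Z$ is an $\ell'$-group and $\tilde D$ is an $\ell$-group we have $\tilde D\cap Z=1$, so $\tilde D\cong D_{L}$, and $\tilde D\cong D$ because source-algebra equivalences preserve the defect group. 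Hence $D_{L}\cong D$, while $\tilde B$ is Morita equivalent to $B$ and has defect group $\tilde D\cong D$. This gives (i)--(iv).

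The only genuine obstacle I foresee is the K\"ulshammer--Puig input of the second step --- the construction of $L$, the (untwisted) cocycle, and the source-algebra isomorphism with $B$ --- which is exactly where the nilpotency of $b$, through the very rigid structure of its source algebra $\CO P$, is essential. The Fong--Reynolds reduction, and the passage from a twisted group algebra to a central $\ell'$-extension if one starts from that formulation, are routine.
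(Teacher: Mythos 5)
Your argument is exactly the intended one: the paper gives no proof of this proposition, citing it directly to K\"ulshammer--Puig \cite{kp}, and the standard derivation is precisely your Fong--Reynolds reduction to the stabiliser $H$ followed by the K\"ulshammer--Puig structure theorem for a stable nilpotent block, whose conclusion already includes the central $\ell'$-extension and the source-algebra (hence defect-group-preserving Morita) equivalence. The only cosmetic remark is that your cohomological aside on why the extension is by an $\ell'$-group is unnecessary (it is part of the cited theorem) and, as phrased, slightly loose, since the kernel of $H^{2}(L/M,\CO^{\times})\to H^{2}(L/M,k^{\times})$ need not be trivial a priori; but this does not affect the proof.
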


We will show that many blocks of finite groups of Lie type are covered by a nilpotent block. These blocks have very nice properties, as shown by Puig.

\begin{definition} We say that a block $B$ of a finite group $G$ is nilpotent-covered if there exists a group $\tilde G$ containing $G$ as a normal subgroup, and a nilpotent block $\tilde B$ of $\tilde G$ covering $B$.
\end{definition}

\begin{proposition}[\cite{pu10}]
\label{puig}
Every nilpotent-covered block $B$ of $ \CO G$ with defect group $D$ is Morita equivalent to its Brauer correspondent in $\CO N_G(D)$.
\end{proposition}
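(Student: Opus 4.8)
The plan is to work with source algebras. By Fong--Reynolds we may replace $\tilde G$ by the stabiliser of $B$ in $\tilde G$: this changes neither the Morita equivalence class of $B$ nor the nilpotency of the covering block, so we may assume that $\tilde B$ is the \emph{unique} block of $\CO\tilde G$ covering $B$. By a well-known result on covered blocks we may then choose a defect group $\tilde D$ of $\tilde B$ together with compatible maximal Brauer pairs $(D,e_D)\le(\tilde D,\tilde e_{\tilde D})$ such that $D=\tilde D\cap G$ is a defect group of $B$.

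Since $\tilde B$ is nilpotent, Puig's structure theorem for nilpotent blocks says that the source algebra of $\tilde B$ with respect to $(\tilde D,\tilde e_{\tilde D})$ is isomorphic, as an interior $\tilde D$-algebra, to $\End_{\CO}(\tilde W)\otimes_{\CO}\CO\tilde D$ for a suitable endo-permutation $\CO\tilde D$-module $\tilde W$ of vertex $\tilde D$; in particular the fusion system of $\tilde B$ on $\tilde D$ is trivial. The next step, and the heart of the argument, is to transport this information along the normal inclusion $G\lhd\tilde G$ by means of the Clifford theory of source algebras (the crossed-product and pointed-group formalism of Külshammer and Puig for extensions of blocks): the source algebra $A$ of $B$ with respect to $(D,e_D)$ is recovered from a graded component of the source algebra of $\tilde B$, and the triviality of the fusion of $\tilde B$ forces $A$ to be of \emph{inertial type}, namely $A\cong\End_{\CO}(W)\otimes_{\CO}\CO_{\gamma}[D\rtimes E]$, where $E=N_G(D,e_D)/DC_G(D)$ is the inertial quotient of $B$ and $\gamma\in H^2(E;\CO^{\times})$; note that the relevant extension of $E$ by $D$ splits by Schur--Zassenhaus, so no further group-theoretic datum intervenes. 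Equivalently, $A$ is isomorphic, up to a matrix-algebra factor, to the source algebra of a block with normal defect group $D$, inertial quotient $E$ and cocycle $\gamma$.

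It then remains to identify this with the Brauer correspondent $b$ of $B$ in $N_G(D)$. The block $b$ has normal defect group $D$, so by Külshammer's description of blocks with normal defect group its source algebra is $\cong\End_{\CO}(W')\otimes_{\CO}\CO_{\gamma'}[D\rtimes E']$; here $E'=E$ and $\gamma'=\gamma$ because the inertial quotient and the second cohomology class are local invariants read off from the Brauer pair $(D,e_D)$ and the unique block of $C_G(D)$ it determines, hence are preserved under the Brauer correspondence. Therefore $A$ and the source algebra of $b$ agree up to a matrix-algebra factor, so $B$ and $b$ are source-algebra equivalent, in particular Morita equivalent over $\CO$, and $b$ is the Brauer correspondent of $B$ in $\CO N_G(D)$.

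I expect the genuine obstacle to be the middle step: extracting the inertial form of the source algebra of $B$ from that of the nilpotent block $\tilde B$, and in particular pinning down the cocycle $\gamma$ so that it coincides with the one occurring for the Brauer correspondent. This is precisely where the Külshammer--Puig book-keeping of crossed products over a nilpotent block is required (it is also the reason one cannot in general take $\gamma$ trivial), together with the verification that covering by a nilpotent block leaves no room for fusion in $B$ beyond that controlled by $N_G(D)$.
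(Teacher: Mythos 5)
The paper does not actually prove this statement: its ``proof'' consists of the single line that this is part of Corollary 4.3 of \cite{pu10}, i.e.\ Puig's theorem on nilpotent extensions of blocks is taken as a black box. Your sketch has the right overall shape --- reduce by Fong--Reynolds, pick compatible Brauer pairs with $D=\tilde D\cap G$, invoke Puig's structure theorem for the nilpotent block $\tilde B$, conclude that the source algebra of $B$ has the inertial form $\End_{\CO}(W)\otimes_{\CO}\CO_{\gamma}[D\rtimes E]$, and match this against K\"ulshammer's description of the Brauer correspondent $b$ (whose K\"ulshammer--Puig class and inertial quotient are indeed the same local invariants). The final matching step and the Morita (rather than source-algebra) conclusion are fine as stated.

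The genuine gap is the step you yourself flag as ``the heart of the argument'': deducing the inertial form of the source algebra of $B$ from the nilpotency of the \emph{covering} block. This is not book-keeping that can be delegated to the K\"ulshammer--Puig crossed-product formalism, because that machinery (the result quoted as Proposition \ref{kp} in this paper, from \cite{kp}) treats the \emph{opposite} configuration: a block of $G$ covering a nilpotent block of a normal subgroup $N\lhd G$. Here the nilpotent block sits on the overgroup $\tilde G$ and $B$ sits on the normal subgroup, and recovering $\CO Gb$ inside the nilpotent algebra $\CO\tilde G\tilde b\cong\End_{\CO}(\tilde W)\otimes\CO\tilde D$ --- including the identification of the resulting $2$-cocycle with the local K\"ulshammer--Puig class of $(D,e_D)$ --- is precisely the content of Puig's 2011 paper and resisted proof for well over a decade after the dual case was settled. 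So while your outline correctly predicts the statement one needs, the proposal assumes rather than establishes the theorem's essential content; as it stands it is a plausible reading of \emph{why} the cited result should be true, not a proof of it.
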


\begin{proof}
This is part of Corollary 4.3 of~\cite{pu10}.
\end{proof}

\begin{lemma}
\label{nilcoveredandquotients}
Let $G$ be a finite group and $N \lhd G$ with $Z(N) \leq Z(G)$. Let $\bar b$ be a block of $N/Z(N)$, and let $b$ be the unique block of $N$ corresponding to $\bar b$ with $O_{\ell'}(Z(N))$ in its kernel. Then $b$ is covered by a nilpotent block of $G$ if and only if $\bar b$ is covered by a nilpotent block of $G/Z(N)$.
\end{lemma}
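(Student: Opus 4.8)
The plan is to reduce the two-sided statement to a chain of elementary reductions using the results already at hand, principally Proposition \ref{watanabe} and Proposition \ref{kp}. Write $Z = Z(N)$, and decompose $Z = O_{\ell}(Z) \times O_{\ell'}(Z)$. Since $b$ is specified to have $O_{\ell'}(Z)$ in its kernel, $b$ is really a block of $N/O_{\ell'}(Z)$, and likewise $\bar b$ is a block of $N/Z$; both of these are central quotients by $\ell$-groups once we factor out $O_{\ell'}(Z)$. So the first step is to observe that, replacing $G$ by $G/O_{\ell'}(Z)$ (legitimate because $O_{\ell'}(Z) \leq Z(G)$ is a central $\ell'$-subgroup, hence blocks and their defect groups and the covering relation are all unchanged, and the property of being nilpotent is unchanged by Proposition \ref{watanabe} applied in the $\ell'$ direction, or simply by the classical fact that central $\ell'$-quotients induce Morita equivalences on blocks), we may assume $O_{\ell'}(Z) = 1$, so $Z = O_\ell(Z)$ is a central $\ell$-subgroup of $G$, $b$ is a block of $N$, and $\bar b$ is the unique block of $N/Z$ dominated by $b$.

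The second step handles the harder (``only if'') direction. Suppose $b$ is covered by a nilpotent block $B$ of $G$, with defect group $D$; then $D \cap N$ is a defect group of $b$ and $b$ itself is nilpotent (covered blocks of nilpotent blocks are nilpotent — this is standard, and in any case $b$ being a block covered by the nilpotent $B$ forces the relevant fusion to be trivial). Since $Z \leq Z(G)$ is an $\ell$-group contained in the defect group $D$, let $\bar B$ be the unique block of $G/Z$ corresponding to $B$; by Proposition \ref{watanabe}, $\bar B$ is nilpotent, it has defect group $D/Z$, and it covers the block of $N/Z$ that $B$ dominates. The one point needing care is that the block of $N/Z$ covered by $\bar B$ is indeed $\bar b$: this follows because domination of blocks under a central quotient commutes with covering (the canonical surjection $\CO N \to \CO(N/Z)$ and $\CO G \to \CO(G/Z)$ are compatible, and $\bar b$ is by hypothesis the unique block of $N/Z$ dominated by $b$). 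Hence $\bar b$ is covered by the nilpotent block $\bar B$ of $G/Z$, as required.

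The third step is the converse. Suppose $\bar b$ is covered by a nilpotent block $\bar B$ of $G/Z$. Let $B$ be the unique block of $G$ dominating $\bar B$ and with trivial kernel on the $\ell'$-part (here that is automatic since $O_{\ell'}(Z)=1$); equivalently $B$ is the block of $G$ corresponding to $\bar B$ under inflation along $G \to G/Z$. Then $B$ covers $b$ (again by compatibility of domination with covering), and by Proposition \ref{watanabe} $B$ is nilpotent because $\bar B$ is. So $b$ is covered by the nilpotent block $B$ of $G$, completing the equivalence.

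I expect the main obstacle to be the bookkeeping in the second step: verifying that, under the central $\ell$-quotient, the block $\bar B$ of $G/Z$ produced from $B$ by Proposition \ref{watanabe} covers precisely the block $\bar b$ of $N/Z$ and not some other block dominated by $b$. This is where one must be careful that $Z = Z(N) \leq Z(G)$ genuinely lies in a defect group of $B$ (true because $Z \leq N$ is central so lies in $Z(D)$ for a suitably chosen defect group $D$ of $B$ meeting $N$ in a defect group of $b$) and that the correspondence of blocks under $\CO G \to \CO(G/Z)$ restricts compatibly to $\CO N \to \CO(N/Z)$. Everything else is a routine application of the stated propositions together with the standard facts that central $\ell'$-subgroups are invisible to block theory and that blocks covered by nilpotent blocks are nilpotent.
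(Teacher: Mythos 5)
Your proof is correct and takes essentially the same route as the paper, whose proof simply observes that the lemma is an almost immediate corollary of Proposition \ref{watanabe}: you split off the central $\ell'$-part of $Z(N)$, then use the covering-compatible bijection of blocks under the central $\ell$-quotient together with Proposition \ref{watanabe} to transfer nilpotency in both directions. The extra remarks (defect group $D/Z$, blocks covered by nilpotent blocks being nilpotent) are not needed but do no harm.
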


\begin{proof}
This is an almost immediate corollary of Proposition \ref{watanabe}.
\end{proof}

The main result of~\cite{kk96} applies particularly well to blocks with elementary abelian defect groups:

\begin{proposition}
\label{KK} Let $G$ be a finite group  and let  $B$ be a block of $kG$ with elementary abelian defect group $D$ and suppose $N \unlhd G$ with $G=ND$. If $B$ covers a $G$-stable block $b$ of $kN$, then  there is an elementary abelian $\ell$-group $Q$  such that $B$ is Morita equivalent to a block $C$  of  $k(N \times Q) $  with  defect group $(D \cap N) \times Q \cong D$.
\end{proposition}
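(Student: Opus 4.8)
The plan is to combine Clifford theory with Puig's theory of source algebras, reducing the assertion to an ``untwisting'' statement for a skew group algebra; the defect-group hypothesis will enter precisely at that last step.

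First, some reductions. Since $b$ is $G$-stable, its block idempotent $e_b$ lies in $(kN)^G\subseteq Z(kG)$, and as $G/N\cong DN/N\cong D/(D\cap N)$ is an $\ell$-group, $B=kGe_b$ is the unique block of $kG$ covering $b$; moreover $P:=D\cap N$ is a defect group of $b$ and $D/(D\cap N)$ is elementary abelian. Viewing $D$ as an $\mathbb{F}_\ell$-space, choose a complement $Q\leq D$ to $P$, so that $D=P\times Q$ internally, $Q\cong D/(D\cap N)\cong G/N$, $Q\cap N=1$, $QN=G$, and $Q$ centralises $P$. Thus $G=N\rtimes Q$, hence $kG=kN\rtimes Q$ is a skew group algebra and, $e_b$ being $Q$-stable, $B=b\rtimes Q$ with no $2$-cocycle. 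On the other hand $C:=b\otimes_k kQ$ is a block of $k(N\times Q)=kN\otimes kQ$ with defect group $P\times Q\cong D$, since $kQ$ is local with defect group $Q$. So the assertion is equivalent to $b\rtimes Q$ being Morita equivalent to $b\otimes kQ$.

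Second, pass to source algebras. Pick a source idempotent $i$ of $b$ with respect to $P$: a primitive idempotent of $(kN)^P$ lying in $b$ with $\mathrm{Br}_P(i)\neq0$. Because $Q$ is an $\ell$-group centralising $P$ and stabilising $b$, it permutes the local points of $P$ on $b$, and $i$ can be chosen $Q$-stable: a maximal $B$-Brauer pair $(D,e_D)$ is $Q$-fixed (as $D$ abelian gives $Q\leq C_G(D)$), hence so is the unique $B$-Brauer pair of shape $(P,\ast)$ below it and the corresponding local point of $P$ on $b$, and a $Q$-fixed local point of an $\ell$-group contains a $Q$-fixed idempotent. With $i$ chosen $Q$-stable we get $i\in(kG)^D$, $BiB=B$, and $iBi=ikGi=(ikNi)\rtimes Q=A\rtimes Q$, where $A:=ikNi$ is a source algebra of $b$ and $Q$ acts on $A$ by conjugation in $kG$, fixing the image $\{ui:u\in P\}$ of $P$ pointwise. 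Since $iBi\sim B$ and $A\otimes kQ\sim b\otimes kQ$, it suffices to prove that $A\rtimes Q\cong A\otimes kQ$ as $k$-algebras; equivalently, that the $Q$-action $\rho$ on $A$ is inner and realised by a homomorphism $Q\to(A^P)^\times$.

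This is the crux, and the defect hypothesis is indispensable here. First, $i$ is also a source idempotent of $B$ with respect to $D$: as $\mathrm{Br}_D=\mathrm{Br}_Q\circ\mathrm{Br}_P$ and $i$ is $Q$-fixed with $\mathrm{Br}_P(i)\neq0$, one gets $\mathrm{Br}_D(i)\neq0$. Hence $A\rtimes Q=iBi$ is a source algebra of $B$, and its Brauer quotient $\mathrm{Br}_D(iBi)$ is a source algebra of the block $e_D$ of $kC_G(D)$, which has $D$ as a \emph{central} defect group; such a block is Morita equivalent to $kD$ and has source algebra $kD=kP\otimes kQ$. Factoring $\mathrm{Br}_D=\mathrm{Br}_Q\circ\mathrm{Br}_P$: applying $\mathrm{Br}_P$ collapses $A$ to the source algebra of the block of $kC_N(P)$ with central defect group $P$, namely $kP$, carrying a residual $Q$-action; a dimension count using $\mathrm{Br}_Q(\mathrm{Br}_P(A)\rtimes Q)\cong kP\otimes kQ$ forces $Q$ to act trivially on $\mathrm{Br}_P(A)\cong kP$, and then Puig's analysis of automorphisms of source algebras (together with the fact that no $D$-fusion of $B$ mixes the $P$- and $Q$-parts) forces $\rho$ to be inner via elements of $(A^P)^\times$. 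The residual obstruction to choosing these elements multiplicatively lies in $H^2(Q,Z(A)^\times)=H^2(Q,Z(b)^\times)$: its $k^\times$-part vanishes because $x\mapsto x^\ell$ is bijective on $k^\times$, and its $(1+J(Z(b)))$-part is trivialised by the genuine homomorphic lift $Q\to(kD)^\times$ already present in $\mathrm{Br}_D(iBi)$. This yields $A\rtimes Q\cong A\otimes kQ$ and hence the proposition.

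The principal obstacle is this last step: showing that $\rho\colon Q\to\Out(A)$ is trivial. Without the hypothesis that $D$ itself (and not some unrelated group $P\times Q$) is a defect group of $B$, the algebras $b\rtimes Q$ and $b\otimes kQ$ need not lie in the same Morita equivalence class, so it is exactly the comparison with the central-defect block $e_D$ of $kC_G(D)$ — forcing $\mathrm{Br}_D(iBi)\cong kD$ — that does the work. A secondary, purely technical, point is the construction of a $Q$-stable source idempotent, which rests on the $Q$-invariance of the maximal $B$-Brauer pair.
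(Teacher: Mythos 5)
Your proposal is, in substance, an attempt to reprove the Koshitani--K\"ulshammer splitting theorem from scratch. The paper's own proof is two lines: since $D$ is elementary abelian one writes $D=(D\cap N)\times Q$, and then the main result of \cite{kk96} is quoted verbatim to give the $k$-algebra isomorphism $B\cong kQ\otimes_k b$, which is visibly a block of $k(N\times Q)$ with defect group $(D\cap N)\times Q$. Your reductions in the first part (uniqueness of $B$ over $b$, $P=D\cap N$ a defect group of $b$, $G=N\rtimes Q$, and the reformulation as ``$b\rtimes Q\simeq b\otimes kQ$'') are fine and agree with the setup of \cite{kk96}; but from that point on the decisive steps are asserted rather than proved, so as a standalone argument it has genuine gaps.

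Concretely: (1) the claim that $\mathrm{Br}_P(i)\neq 0$ together with $Q$-fixedness of $i$ gives $\mathrm{Br}_D(i)\neq 0$ is a non sequitur --- a $Q$-fixed idempotent with nonzero image under $\mathrm{Br}_P$ can vanish under the further Brauer construction for $Q$ (think of a sum of idempotents permuted freely by $Q$); producing a $D$-stable source idempotent of $b$ which is simultaneously a source idempotent of $B$ (and even the prior claim that a $Q$-stable local point contains a $Q$-fixed primitive idempotent of $(kNb)^P$) is genuine content, of K\"ulshammer--Puig type, and needs an argument. (2) The crux --- that each $u\in Q$ acts on $A$ by an inner automorphism coming from $(A^P)^\times$ --- does not follow from the triviality of the induced action on $\mathrm{Br}_P(A)\cong kP$ (which, incidentally, is immediate from $Q$-stability of $i$ and $[Q,P]=1$, no dimension count needed); it requires the theorem that over $k$ every automorphism of a source algebra as an interior $P$-algebra is induced by an element of $(A^P)^\times$, which you gesture at (``Puig's analysis'') but never state or correctly invoke, and the parenthetical about $D$-fusion not mixing the $P$- and $Q$-parts does no work. (3) The final obstruction in $H^2\bigl(Q,1+J(Z(b))\bigr)$ is not trivial for formal reasons: $Q$ is an $\ell$-group and $1+J(Z(b))$ is $\ell$-torsion, so such $H^2$ groups are typically nonzero, and ``trivialised by the homomorphic lift present in $\mathrm{Br}_D(iBi)$'' is not an argument --- compatibility with $\mathrm{Br}_D$ only controls the image of the cocycle over $kD$, not its class. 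Points (2) and (3) are exactly where the content of the splitting theorem lies; either prove them or simply cite \cite{kk96}, as the paper does, in which case the whole source-algebra apparatus is unnecessary.
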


\begin{proof}
We may write $D=(D \cap N) \times Q$ for some $Q \leq D$. By the main result of~\cite{kk96}, $B \cong kQ \otimes_k b$ as $k$-algebras. Observe that $kQ \otimes_k b$ is a block of $N \times Q$ with defect group $D=(D \cap N) \times Q$.
\end{proof}

For dealing with finite simple groups, a powerful reduction is provided by a theorem of Bonnaf\'e and Rouquier~\cite{BonRou}. However, to effectively use the Bonnaf\'e-Rouquier results, we will need also to have a version
for simple Chevalley groups which are not groups of Lie type (the main cases in question are the simple groups of type $E_7$, where the finite group of Lie type has either a centre of order two or a normal subgroup of index two). This will be done in Section~\ref{br} using the following well known result. A proof is given for the convenience of the reader.

\begin{lemma}
\label{moritaquotient}
Let $G$ and $H$ be finite groups, $b$ and $c$ be block  idempotents of $\CO G$ and $\CO H$ respectively. Let $M$ be an $(\CO Gb, \CO Hc)$-bimodule, and let
 $Z$ be an $\ell$-group embedded as a central subgroup of both
$G$ and $H$. Let $\bar G =G/Z $, $\bar H = H/Z$, and
 let $\bar b$ (respectively $\bar c $) be the image of $b$ (respectively $c$) in $\CO G/Z$ (respectively $\CO H/Z $) under the canonical surjection $\CO G \rightarrow \CO G/Z$ (respectively $\CO H \rightarrow \CO H/Z$).

 Suppose that $ zm = mz $ for all $z \in Z$ and all $m \in M$.
 Then $ \CO \otimes _{\CO Z} M $ is an $(\CO\bar G\bar b, \CO \bar H \bar c)$-bimodule via $ \bar g ( 1\otimes m) \bar h = 1\otimes gmh $, $g\in G, h\in H, m\in M$.
If $M \otimes_{\CO Hc}-$ induces a Morita equivalence between $\CO Hc $ and $\CO Gb$, then $(\CO \otimes_{\CO Z} M )\otimes_{\CO \bar H \bar c} -$ induces a Morita equivalence between $\CO \bar H \bar c $ and $\CO \bar G \bar b$.
\end{lemma}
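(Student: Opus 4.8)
The plan is to verify directly that $\bar M := \CO \otimes_{\CO Z} M$ has the claimed bimodule structure and then to produce an explicit inverse bimodule, transporting the inverse of $M$ along the quotient map. First I would check that the formula $\bar g(1\otimes m)\bar h = 1\otimes gmh$ is well defined: since $Z$ acts the same way on $M$ from the left and from the right (the hypothesis $zm = mz$), and $\CO \otimes_{\CO Z} M$ identifies $z m \otimes 1$-type tensors, the action of $g$ and $gz$ agree on $\bar M$, and similarly on the right; one also checks $\bar b$ acts as the identity on the left and $\bar c$ on the right, using that $b$ (resp.\ $c$) acts as the identity on $M$. So $\bar M$ is an $(\CO\bar G\bar b, \CO\bar H\bar c)$-bimodule, and the same construction applied to the inverse bimodule $N$ of $M$ (so that $M\otimes_{\CO Hc} N \cong \CO Gb$ and $N\otimes_{\CO Gb} M \cong \CO Hc$ as bimodules) yields $\bar N := \CO\otimes_{\CO Z} N$, an $(\CO\bar H\bar c, \CO\bar G\bar b)$-bimodule.

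The heart of the argument is to show $\bar M \otimes_{\CO\bar H\bar c} \bar N \cong \CO\bar G\bar b$ as $(\CO\bar G\bar b, \CO\bar G\bar b)$-bimodules, and symmetrically for the other composite. The key observation is that $\CO\bar G\bar b = \CO\otimes_{\CO Z}(\CO Gb)$ as a $(\CO Gb, \CO Gb)$-bimodule on which $Z$ acts trivially (via $Z \hookrightarrow G$), because $\CO\bar G = \CO G/(\text{augmentation ideal of }\CO Z)\cdot\CO G = \CO\otimes_{\CO Z}\CO G$. Then I would compute
\[
\bar M \otimes_{\CO\bar H\bar c} \bar N = (\CO\otimes_{\CO Z} M)\otimes_{\CO\bar H\bar c}(\CO\otimes_{\CO Z} N).
\]
Here one uses that $\bar H = H/Z$ and the right $Z$-action on $M$ equals the left $Z$-action, so that $(\CO\otimes_{\CO Z} M)\otimes_{\CO\bar H\bar c}(\CO\otimes_{\CO Z} N) \cong \CO\otimes_{\CO Z}(M\otimes_{\CO Hc} N)$; concretely, $M\otimes_{\CO Hc} N$ already has a well-defined action of $\bar H$ in the relevant tensor slot (the $H$-action factors through nothing, but the point is that tensoring over $\CO Hc$ and then killing $Z$ on the outside is the same as first killing $Z$ on the outside of each factor and tensoring over $\CO\bar H\bar c$, since the $Z$ we quotient by on the right of $M$ is the same central $Z$ we quotient by on the left of $N$). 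Applying the isomorphism $M\otimes_{\CO Hc} N \cong \CO Gb$ of $(\CO Gb, \CO Gb)$-bimodules and the identification $\CO\otimes_{\CO Z}(\CO Gb) \cong \CO\bar G\bar b$ from above gives $\bar M\otimes_{\CO\bar H\bar c}\bar N \cong \CO\bar G\bar b$. The symmetric computation gives $\bar N\otimes_{\CO\bar G\bar b}\bar M \cong \CO\bar H\bar c$, and hence $\bar M\otimes_{\CO\bar H\bar c} -$ is a Morita equivalence.

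I expect the main obstacle to be the bookkeeping in the identification $(\CO\otimes_{\CO Z} M)\otimes_{\CO\bar H\bar c}(\CO\otimes_{\CO Z} N) \cong \CO\otimes_{\CO Z}(M\otimes_{\CO Hc} N)$: one must be careful that the right $\CO Z$-module structure on $M$ used to form $\CO\otimes_{\CO Z} M$ is compatible (via $zm = mz$) with the left $\CO Z$-module structure on $N$ used to form $\CO\otimes_{\CO Z} N$, and that $\CO Hc$ acting on the right of $M$ and left of $N$ descends correctly to $\CO\bar H\bar c$. It is convenient to phrase this as: $\bar M$ is $M$ viewed as an $(\CO\bar G\bar b,\CO\bar H\bar c)$-bimodule via restriction along $\CO\bar H \to \CO H$ is not quite right since $\CO\bar H$ is a quotient, so instead one works with the surjection $\CO G \to \CO\bar G$ and the fact that $\CO\otimes_{\CO Z}(-)$ is an exact functor (as $\CO Z$ is free of finite rank over $\CO$ and $\CO$ is the trivial module, with the augmentation ideal of $\CO Z$ being a direct summand up to the usual subtleties — in fact over $\CO$ one just uses that $\CO\otimes_{\CO Z}- $ is right exact and the relevant modules are projective over $\CO$, so there is no higher Tor). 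Once this compatibility is set up cleanly, the rest is formal manipulation of tensor products.
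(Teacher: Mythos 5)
Your proposal is correct and follows essentially the same route as the paper: quotient $M$ and an inverse bimodule (the paper takes the dual $M^*$, noting $zm^*=m^*z$ is inherited), establish the isomorphism $(\CO\otimes_{\CO Z}M)\otimes_{\CO\bar H\bar c}(\CO\otimes_{\CO Z}M^*)\cong \CO\otimes_{\CO Z}(M\otimes_{\CO Hc}M^*)$ and its counterpart, and conclude via $\CO\otimes_{\CO Z}\CO Gb\cong\CO\bar G\bar b$ and $\CO\otimes_{\CO Z}\CO Hc\cong\CO\bar H\bar c$. The aside about exactness/Tor is unnecessary (the argument is purely formal tensor bookkeeping, as in the paper), but it does no harm.
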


\begin{proof} The first assertion is straightforward. Since $zm^* = m^*z $ for all $z\in Z$, $m^* \in M^* $ we have similarly that
$\CO\otimes_{\CO Z} M^*$ is an
$(\CO\bar H\bar c, \CO \bar G\bar b)$-bimodule via $ \bar h( 1\otimes m^*) \bar g= 1\otimes hm^*g $, $g\in G, h\in H, m^*\in M^*$.
We also have that in $M \otimes_{\CO Hc} M^* $, $z (m \otimes m^*)= (m \otimes m^*) z $ and in $M^*\otimes_{\CO G b} M $, $z (m^* \otimes m)=(m^* \otimes m) z $ for all $z\in Z$, $m\in M$, $m^*\in M^*$.
Further, $ (\CO \otimes _{\CO Z} M ) \otimes _{\CO \bar H\bar c} (\CO \otimes _{\CO Z} M^*) \cong \CO \otimes_{\CO Z}(M\otimes_{\CO Hc} M^*) $ as $(\CO \bar G \bar b, \CO \bar H \bar c)$-bimodule via $1\otimes m \otimes 1 \otimes m^* \rightarrow 1\otimes m\otimes m^* $, for $m\in M$, $m^* \in M^*$
and similarly, $ (\CO \otimes _{\CO Z} M^* ) \otimes _{\CO \bar G\bar b} (\CO \otimes _{\CO Z} M ) \cong \CO \otimes_{\CO Z}(M^*\otimes_{\CO Gb} M) $
as an $(\CO \bar H \bar c, \CO \bar G \bar b)$-bimodule.
The result follows since $ \CO \otimes _{\CO Z} \CO Gb \cong \CO \bar G \bar b $ as an $(\CO \bar G\bar b , \CO \bar G \bar b )$-bimodule and $ \CO \otimes _{\CO Z} \CO Hc \cong \CO \bar H \bar c $ as an $(\CO \bar H\bar c , \CO \bar H \bar c )$-bimodule.
\end{proof}

This is easily extended to the following:

\begin{lemma}
\label{bimodswithZinkernel}
Let $G_1,\ldots,G_t$ and $H_1,\ldots,H_t$ be finite groups. Let $b_i$ be a block  idempotent of $\CO G_i$ and $c_i$ be a block  idempotent of $\CO H_i$ for $i=1,\ldots,t$.
Write $G=G_1 \times \cdots \times G_t$ and $H=H_1 \times \cdots \times H_t$. Write $b=b_1\cdots b_t$ and $c=c_1 \cdots c_t$, so that $b$ is a block idempotent of $\CO G$ and $c$ is a block  idempotent  of $\CO H$. Let $M_i$ be an $(\CO Gb_i,\CO Hc_i)$-bimodule and let $Z_i$ be an $\ell$-group embedded as a central subgroup of both $G_i$ and $H_i$. Write $\bar{G}_i=G_i/Z_i$ and $\bar{H}_i=H_i/Z_i$. Write $M=M_1 \otimes_\CO \cdots \otimes_\CO M_t$, an $(\CO Gb,\CO Hc)$-bimodule.

Suppose that $ z_im_i = m_iz_i $ for all $z_i \in Z_i$ and all $m_i \in M_i$. Let $Z \leq Z_1 \times \cdots \times Z_t$, and write $\bar{G}=G/Z$ and $\bar{H}=H/Z$. Let $\bar{b}$ (resp. $\bar{c}$) be the block of $\bar{G}$ (resp. $\bar{H}$) corresponding to $b$ (resp. $c$).
Then $ \CO \otimes _{\CO Z} M $ is an $(\CO\bar G\bar b, \CO\bar H \bar c)$-bimodule via $ \bar g ( 1\otimes m) \bar h = 1\otimes gmh $, $g\in G, h\in H, m\in M$.
If $M_i \otimes_{\CO H_ic_i}-$ induces a Morita equivalence between $\CO H_ic_i $ and $\CO G_ib_i$ for each $i$, then $(\CO \otimes_{\CO Z} M )\otimes_{\CO \bar H \bar c} -$ induces a Morita equivalence between $\CO \bar H \bar c $ and $\CO \bar G \bar b$.
\end{lemma}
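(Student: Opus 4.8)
The plan is to deduce Lemma \ref{bimodswithZinkernel} from Lemma \ref{moritaquotient} in two stages: first apply Lemma \ref{moritaquotient} componentwise, then quotient down from $Z_1 \times \cdots \times Z_t$ to the chosen subgroup $Z$. Concretely, for each $i$ the hypothesis $z_i m_i = m_i z_i$ together with Lemma \ref{moritaquotient} gives that $\CO \otimes_{\CO Z_i} M_i$ induces a Morita equivalence between $\CO \bar H_i \bar c_i$ and $\CO \bar G_i \bar b_i$. A Morita equivalence of each factor yields a Morita equivalence of the tensor product over $\CO$: the bimodule $(\CO \otimes_{\CO Z_1} M_1) \otimes_\CO \cdots \otimes_\CO (\CO \otimes_{\CO Z_t} M_t)$ induces a Morita equivalence between $\CO \bar H_1 \bar c_1 \otimes_\CO \cdots \otimes_\CO \CO \bar H_t \bar c_t = \CO \bar H' \bar c'$ and $\CO \bar G' \bar b'$, where $\bar G' = G_1/Z_1 \times \cdots \times G_t/Z_t$ and similarly for $\bar H'$, $\bar b'$, $\bar c'$. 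Moreover there is a natural isomorphism $(\CO \otimes_{\CO Z_1} M_1) \otimes_\CO \cdots \otimes_\CO (\CO \otimes_{\CO Z_t} M_t) \cong \CO \otimes_{\CO (Z_1 \times \cdots \times Z_t)} M$ of bimodules, so $\CO \otimes_{\CO (Z_1 \times \cdots \times Z_t)} M$ induces this Morita equivalence.

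Now let $\hat{Z} = Z_1 \times \cdots \times Z_t$, regarded as a central $\ell$-subgroup of both $G$ and $H$. The previous paragraph shows that $\CO \otimes_{\CO \hat Z} M$ induces a Morita equivalence between $\CO (H/\hat Z)\hat c$ and $\CO (G/\hat Z)\hat b$, where $\hat b$, $\hat c$ are the images of $b$, $c$. Since $Z \leq \hat Z$, the quotient $\hat Z/Z$ embeds as a central $\ell$-subgroup of $G/Z$ and $H/Z$, with $(G/Z)/(\hat Z/Z) \cong G/\hat Z$ and $(H/Z)/(\hat Z/Z) \cong H/\hat Z$. To reverse the quotient — i.e. to go from the equivalence over $G/\hat Z$ to one over $G/Z$ — I would apply Lemma \ref{moritaquotient} in the converse direction: that is, I would first apply Lemma \ref{moritaquotient} with ``$Z$'' taken to be $\hat Z/Z$, to the groups $G/Z$, $H/Z$ and the bimodule $\CO \otimes_{\CO Z} M$. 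For this to be legitimate I must check the hypothesis that $\CO \otimes_{\CO Z} M$ is a well-defined $(\CO (G/Z)\bar b, \CO (H/Z)\bar c)$-bimodule on which $\hat Z/Z$ acts centrally, and that quotienting it by $\hat Z/Z$ recovers $\CO \otimes_{\CO \hat Z} M$; both follow from the associativity of the relevant tensor products ($\CO \otimes_{\CO(\hat Z/Z)}(\CO \otimes_{\CO Z} M) \cong \CO \otimes_{\CO \hat Z} M$) and from the fact that the $Z$-invariance hypotheses on the $M_i$ force $Z$, hence all of $\hat Z$, to act trivially after passing to the quotient.

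The only genuinely delicate point is the direction of Lemma \ref{moritaquotient}: that lemma is stated as ``if the original bimodule induces a Morita equivalence then so does the quotient bimodule,'' whereas here I know the quotient ($\CO \otimes_{\CO \hat Z} M$ over $G/\hat Z$) is an equivalence and want the intermediate one ($\CO \otimes_{\CO Z} M$ over $G/Z$). So strictly speaking I cannot cite Lemma \ref{moritaquotient} verbatim in this last step. The clean fix is to avoid the ``converse'' altogether by not quotienting all the way to $\hat Z$ in the first place: apply Lemma \ref{moritaquotient} (respectively its evident multi-factor analogue, proved exactly as Lemma \ref{moritaquotient} is, using $M^*$ and the adjunction isomorphisms) directly with the central subgroup $Z$ — not $\hat Z$ — acting on $M = M_1 \otimes_\CO \cdots \otimes_\CO M_t$. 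The hypothesis $z_i m_i = m_i z_i$ for $z_i \in Z_i$ guarantees that $Z_1 \times \cdots \times Z_t$, and in particular any subgroup $Z$, acts centrally on $M$; and the proof of Lemma \ref{moritaquotient} goes through mutatis mutandis with $\CO \otimes_{\CO Z} M$ in place of $\CO \otimes_{\CO Z_i} M_i$, the key identities $\CO \otimes_{\CO Z} \CO G b \cong \CO \bar G \bar b$ and the compatibility of $\otimes$ with $\otimes_{\CO Z}$ being precisely what is used. Thus the proof reduces to two observations: (1) a tensor product over $\CO$ of Morita equivalences of blocks is a Morita equivalence of the block tensor product, applied to the $M_i$ themselves to see that $M$ induces a Morita equivalence between $\CO Hc$ and $\CO Gb$; and (2) Lemma \ref{moritaquotient} then applies to $M$ with the central $\ell$-subgroup $Z$.

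\begin{proof}
By hypothesis each $M_i$ induces a Morita equivalence between $\CO H_i c_i$ and $\CO G_i b_i$. Taking tensor products over $\CO$, the bimodule $M = M_1 \otimes_\CO \cdots \otimes_\CO M_t$ induces a Morita equivalence between $\CO H c = \CO H_1 c_1 \otimes_\CO \cdots \otimes_\CO \CO H_t c_t$ and $\CO G b = \CO G_1 b_1 \otimes_\CO \cdots \otimes_\CO \CO G_t b_t$; indeed $M^* \cong M_1^* \otimes_\CO \cdots \otimes_\CO M_t^*$ and the tensor products of the individual adjunction isomorphisms $M_i^* \otimes_{\CO G_i b_i} M_i \cong \CO H_i c_i$ and $M_i \otimes_{\CO H_i c_i} M_i^* \cong \CO G_i b_i$ give $M^* \otimes_{\CO Gb} M \cong \CO Hc$ and $M \otimes_{\CO Hc} M^* \cong \CO Gb$.

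Now $Z \leq Z_1 \times \cdots \times Z_t$ is a central $\ell$-subgroup of both $G$ and $H$. For $z = (z_1,\ldots,z_t) \in Z$ and $m = m_1 \otimes \cdots \otimes m_t \in M$ we have $zm = (z_1 m_1) \otimes \cdots \otimes (z_t m_t) = (m_1 z_1) \otimes \cdots \otimes (m_t z_t) = mz$ by hypothesis, so $zm = mz$ for all $z \in Z$, $m \in M$. Applying Lemma \ref{moritaquotient} with this $Z$, the groups $G$ and $H$, the blocks $b$ and $c$, and the bimodule $M$, we conclude that $\CO \otimes_{\CO Z} M$ is an $(\CO \bar G \bar b, \CO \bar H \bar c)$-bimodule via $\bar g(1 \otimes m)\bar h = 1 \otimes gmh$ and that $(\CO \otimes_{\CO Z} M) \otimes_{\CO \bar H \bar c} -$ induces a Morita equivalence between $\CO \bar H \bar c$ and $\CO \bar G \bar b$.
\end{proof}
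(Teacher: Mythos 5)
Your final proof is correct and is essentially the ``easy extension'' of Lemma \ref{moritaquotient} that the paper intends: note that $M=M_1\otimes_\CO\cdots\otimes_\CO M_t$ already induces a Morita equivalence between $\CO Hc$ and $\CO Gb$ (tensor product of the component equivalences), that $Z$ acts centrally on $M$ by the hypotheses on the $Z_i$, and then apply Lemma \ref{moritaquotient} verbatim with this $Z$. The preliminary detour through $\hat Z = Z_1\times\cdots\times Z_t$ and the worry about reversing the quotient were unnecessary, as you yourself recognised before writing the actual proof.
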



\section{On Levi subgroups with abelian Sylow $2$-subgroups}
\label{Leviabelian2Sylow}
Let ${\mathbb F}$ be an algebraically closed field of characteristic $p> 0$ and let $\bf G$ be a connected reductive group
defined over ${\mathbb F}$
with a Steinberg endomorphism $F : {\bf G} \to {\bf G}$, and $G={\bf G}^F$ the finite group of fixed points.
Define $q$ so that $\mathbb{F}_q$ is the field of definition of $G$.

Recall that $\bG =Z^{\circ} (\bG) [\bG, \bG] $, where $Z^{\circ} (\bG)$ is the connected centre of $\bG$, and that the derived subgroup $[\bG, \bG]$ of $\bG$ is a semi-simple group, that is $[\bG, \bG]$ is a commuting product of simple groups, called the components of $\bG$. We assume throughout this section that the fixed point subgroup of no $F$-orbit of components of $\bG$ is isomorphic to a Suzuki or Ree group.

\begin{lemma}\label{abelian-cyclic-quotient} Suppose that $p$ is odd and $[\bG, \bG] $ is simply connected (that is $[\bG, \bG] $ is a direct product of its components, each of which is simply connected).
\begin{enumerate}[(i)]
\item If $G$ has abelian Sylow $2$-subgroups, then $\bG$ is a torus.
\item Suppose that $G$ has non-abelian Sylow $2$-subgroups, but for some central $2$-subgroup $Z$ of $G$, $G/ Z$ has abelian Sylow $2$-subgroups. Then all components of $\bG$ are of type $A_1$. Further, if
$\{\bX_1, \bX_2, \ldots, \bX_r\} $ is an $F$-orbit of components of $[\bG, \bG] $, then $q^r \equiv \pm 3\pmod{8}$, $Z \cap (\prod_{1\leq i\leq r } \bX_i)^F \ne 1 $
and $ Z^{\circ} (\bG)^F \cap (\prod_{1\leq i\leq r } \bX_i)^F =1$.
\end{enumerate}
\end{lemma}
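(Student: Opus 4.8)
The plan is to reduce everything to a statement about the finite groups $\bX^F$ attached to single $F$-orbits of components and then to play off the structure of $\mathrm{SL}_2(q^r)$ against the hypothesis on Sylow $2$-subgroups. Since $[\bG,\bG]$ is simply connected, it is the direct product of its components, and $F$ permutes these components; grouping each $F$-orbit $\{\bX_1,\dots,\bX_r\}$ together, the fixed-point group $(\prod_i \bX_i)^F$ is isomorphic to $\bX_1^{F^r}$, a quasi-simple group of the form $X(q^r)$ in the simply connected isogeny type of the component $\bX_1$. Moreover $\bG = Z^\circ(\bG)[\bG,\bG]$ gives $G = Z^\circ(\bG)^F \cdot [\bG,\bG]^F$ up to an $\ell'$-correction, and $[\bG,\bG]^F$ is the direct product of these $X(q^r)$'s (one per $F$-orbit). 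A Sylow $2$-subgroup of $G$ thus contains the direct product of Sylow $2$-subgroups of the various $X(q^r)$ together with a Sylow $2$-subgroup of the torus $Z^\circ(\bG)^F$, the latter being abelian. Hence $G$ has abelian Sylow $2$-subgroups if and only if each $X(q^r)$ does, and this is where the classification of simple groups (more precisely, the known list of quasi-simple groups of Lie type with abelian Sylow $2$-subgroups, excluding Suzuki/Ree which we have assumed away) enters.

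For part (i): if $G$ has abelian Sylow $2$-subgroups, then each quasi-simple factor $X(q^r)$ in $[\bG,\bG]^F$ must have abelian Sylow $2$-subgroups. But for $p$ odd, a simply connected quasi-simple group of Lie type in characteristic $p$ (not Suzuki or Ree) always has non-abelian Sylow $2$-subgroups — one checks this from the standard description of Sylow $2$-subgroups of $\mathrm{SL}_n$, $\mathrm{SU}_n$, $\mathrm{Sp}_{2n}$, $\mathrm{Spin}^\pm_n$ and the exceptional types, since e.g. even $\mathrm{SL}_2(q)$ for $q$ odd has generalised quaternion (hence non-abelian) Sylow $2$-subgroups. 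Therefore $[\bG,\bG]$ has no components at all, i.e. $[\bG,\bG]=1$, so $\bG = Z^\circ(\bG)$ is a torus.

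For part (ii): now $G$ has non-abelian Sylow $2$-subgroups but $G/Z$ has abelian ones for some central $2$-subgroup $Z$. Pushing down to $G/Z$, each factor $X(q^r)$ maps to $X(q^r)/(Z\cap X(q^r)^F\text{-part})$, and for the quotient to have abelian Sylow $2$-subgroups we need, for every $F$-orbit, that $X(q^r)$ modulo a central $2$-subgroup becomes abelian-Sylow while $X(q^r)$ itself need not be. Among simply connected quasi-simple groups in odd characteristic (no Suzuki/Ree), the only ones admitting a central $2$-subgroup quotient with abelian Sylow $2$-subgroups are of type $A_1$: indeed $\mathrm{SL}_2(q^r)$ has quaternion Sylow $2$-subgroups of order $2^{a+2}$ where $2^a\|q^{2r}-1$, and $\mathrm{SL}_2(q^r)/\{\pm 1\}=\mathrm{PSL}_2(q^r)$ has dihedral Sylow $2$-subgroups, which are abelian (in fact Klein four) exactly when $|Q|/2=4$, i.e. $q^r\equiv\pm 3\pmod 8$; in all other types the centre is too small or of the wrong parity to kill the non-abelianness. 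This forces all components to be of type $A_1$ and simultaneously gives the congruence $q^r\equiv\pm 3\pmod 8$ for every $F$-orbit that genuinely contributes (and since $G$ is non-abelian-Sylow at least one does — in fact each must, else that factor would already be abelian-Sylow over $k$ and could be absorbed; here one argues that if some orbit had $X(q^r)$ with abelian Sylow $2$-subgroups then by (i)-type reasoning it would be trivial, contradiction, so every orbit has $X=\mathrm{SL}_2$ with $q^r\equiv\pm3\pmod 8$). The remaining two assertions then follow: $Z\cap(\prod_i\bX_i)^F\ne 1$ because the $2$-part of a Sylow $2$-subgroup of $\mathrm{SL}_2(q^r)$ can only be rendered abelian in the quotient by a central involution lying in $Z$, and that involution lies in $\mathrm{SL}_2(q^r)^F=(\prod_i\bX_i)^F$; and $Z^\circ(\bG)^F\cap(\prod_i\bX_i)^F=1$ because $Z^\circ(\bG)\cap[\bG,\bG]$ is finite of order prime to $p$ and, $[\bG,\bG]$ being a direct product of its simply connected $A_1$-components, $Z^\circ(\bG)$ meets each component trivially (the only way $Z^\circ(\bG)$ could meet a factor is through $Z^\circ(\bG)\cap \bX_i\le Z(\bX_i)$, which is trivial since $[\bG,\bG]$ is a \emph{direct} product).

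The main obstacle, and the place where I would be most careful, is the clean bookkeeping in part (ii): establishing that \emph{every} $F$-orbit of components (not merely one) must be of type $A_1$ with $q^r\equiv\pm3\pmod 8$, and that $Z$ must genuinely meet the corresponding fixed-point subgroup rather than sitting inside the torus part. This requires a precise case analysis of which simply connected quasi-simple groups of Lie type in odd characteristic admit a central $2$-subgroup with abelian-Sylow quotient — effectively invoking the classification — together with the observation that a Sylow $2$-subgroup of a direct product is abelian iff each factor's is, applied after quotienting, so that an orbit escaping the $A_1$/congruence conclusion would already force non-abelian Sylow $2$-subgroups in $G/Z$. Everything else (the torus-intersection statements, the structure $\bG=Z^\circ(\bG)[\bG,\bG]$, the identification $(\prod_i\bX_i)^F\cong X(q^r)$) is routine structure theory of reductive groups.
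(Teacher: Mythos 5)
Your part (i) and the first two assertions of part (ii) are fine and follow essentially the paper's route: pass to a single $F$-orbit, identify $(\prod_i\bX_i)^F\cong\bX_1^{F^r}$, observe that its image in $G/Z$ is $\bX_1^{F^r}/\bigl(Z\cap(\prod_i\bX_i)^F\bigr)$ and must have abelian Sylow $2$-subgroups, and use the known list to force type $A_1$, $SL_2(q^r)$ with $q^r\equiv\pm3\pmod 8$, and $Z\cap(\prod_i\bX_i)^F\ne 1$ (since $SL_2(q^r)$ itself has quaternion Sylow $2$-subgroups).

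However, your justification of the last assertion, $Z^{\circ}(\bG)^F\cap(\prod_i\bX_i)^F=1$, has a genuine gap. You claim $Z^{\circ}(\bG)\cap\bX_i\leq Z(\bX_i)$ is trivial ``since $[\bG,\bG]$ is a direct product of its components.'' Directness of the product of the components says nothing about the intersection of a component with the connected centre, and the claim is false in general: for $\bG=\GL_2$ one has $Z^{\circ}(\bG)\cap[\bG,\bG]=\{\pm I\}\ne 1$, and $-I$ lies in the unique component $\SL_2$. The assertion to be proved is precisely the statement that the central involution $\zeta$ of $(\prod_i\bX_i)^F\cong SL_2(q^r)$ does \emph{not} lie in $Z^{\circ}(\bG)$, and this is not a formal consequence of the structure of $\bG$; it genuinely uses the hypothesis that $G/Z$ has abelian Sylow $2$-subgroups. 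The paper's proof argues by contradiction: assuming $\zeta\in Z^{\circ}(\bG)$, Lang--Steinberg produces $2$-elements $u\in Z^{\circ}(\bG)$, $g\in[\bG,\bG]$ with $u^{-1}F(u)=\zeta=gF(g^{-1})$, so $(u,g)$ lies in the preimage $A$ of $G$ under the multiplication map; since $r$ is odd, the $\bX_1$-coordinate of $g$ is not $F^r$-fixed, so the projection $\tau_1(A)$ has quaternion Sylow $2$-subgroups of order at least $16$, while the hypothesis $[\mu(S),\mu(S)]\leq Z$ forces $[\tau_1(S),\tau_1(S)]\leq Z(\tau_1(S))$, which fails for quaternion groups of order at least $16$. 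Some argument of this kind (exploiting the hypothesis on $G/Z$, not just the algebraic-group structure) is needed, and your proposal does not supply it.
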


\begin{proof} The first statement is well-known. We prove (ii). Let $ \{\bX_1, \ldots, \bX_s \} $ be the set of components of $[\bG, \bG] $ and let $\{ \bX_1, \ldots ,\bX_r\}$, $ 1\leq r \leq s $ be an $F$-orbit of components such that $ F(\bX_i) = \bX_{i+1} $ for $1\leq i\leq r-1 $ and $F(\bX_r)=\bX_1 $.
Then $(\prod_i \bX_i)^F \cong \bX_1^{F^r}$, and denoting by $Z'$ the image of $ Z\cap (\prod_i \bX_i)^F $ under this isomorphism,
$ \bX_1^{F^r}/Z' $ has abelian Sylow $2$-subgroups. It follows that $\bX_1 $ is of type $A_1 $,
$\bX_1^{F^r}\cong SL_2(q^r)$ with $q^r \equiv \pm 3\pmod{8}$ and that $Z'$ is the unique central subgroup of order $2$ of $\bX_1^{F^r} $.
In other words, $Z \cap (\prod_{1\leq i\leq r } \bX_i)^F$ is generated by $ (z_1, F(z_1), \ldots, F^{r-1}(z_1 )) =:\zeta $ where $ z_1$ is the unique involution in the centre of $ \bX_1 $.
It remains only to show that $ \zeta \notin Z^{\circ}(\bG) $.
Suppose the contrary.
The multiplication map $ \mu : Z^{\circ} (\bG) \times [\bG, \bG] \to \bG $ is surjective with kernel $\Delta (Z^{\circ} (\bG) \cap [\bG, \bG ] )$, where $\Delta (Z^{\circ} (\bG) \cap [\bG, \bG ] )$ is the diagonally embedded copy of $Z^{\circ} (\bG) \cap [\bG, \bG ] $ in
$Z^{\circ} (\bG) \times [\bG, \bG] $.
Let $ A$ denote the inverse image under $ \mu $ of $G$. So,
\[A =\{ ( u, g) \, : \, u\in Z^{\circ}({\bG}), g \in [{ \bG}, {\bG}] \, \text{ such that } \, u^{-1} F(u) = g F(g^{-1}) \}.\]
Let $\tau_1: A \to \bX _1$ be the (restriction to $ A$ of the) projection of
 $Z^{\circ} (\bG) \times [\bG, \bG]= Z^{\circ} (\bG) \times \bX_1 \times \cdots \times \bX_s $ onto $\bX_1$. Since
$ A$ contains $Z^{\circ} (\bG)^F \times [\bG, \bG]^F $, $ \bX_1^{F^r} \leq \tau_1(A)$.

By the Lang-Steinberg theorem, there exist $2$-elements $u\in Z^{\circ} (\bG) $ and $ g\in [\bG, \bG]$ such that
$ u^{-1}F(u) = \zeta= gF(g^{-1})$. In particular, $ (u, g ) \in A $. Write $g=(x_1, \ldots, x_s) $, $x_i \in \bX_i $.
The equation $ \zeta= gF(g^{-1})$
implies that $ x_1= (z_1)^r F^r(x_1)$. Since $q^r \equiv \pm 3\pmod{8}$, $r$ is odd, hence
$\tau_1(u, g) =x_1 \notin \bX_1 ^{F^r} $. Consequently, a Sylow $2$-subgroup of $\tau_1(A)$ properly contains a Sylow $2$-subgroup of $\bX_1^{F^r}$ and it follows that the Sylow $2$-subgroups of $\tau_1(A) $ are non-abelian of order at least $16$. Since $\bX_1 $ is of type $A_1$, any finite $2$-subgroup of $\bX_1 $ is cyclic or quaternion. Hence the Sylow $2$-subgroups of $\tau_1 (A) $ are quaternion of order at least $16$.

Let $S$ be a Sylow $2$-subgroup of $A$. By hypothesis, $[\mu (S), \mu (S)] \leq Z $ and the inverse image under $\mu $ of $Z$ is a central subgroup of
$Z^{\circ} (\bG) \times [\bG, \bG] $ (note that $Z^{\circ}( \bG) \times Z([\bG, \bG])$ is the full inverse image under $\mu $ of $Z(\bG)$ and $Z \leq Z(\bG)$). Hence, $ [S, S]\leq Z(S)$ from which it follows that $[\tau_1(S), \tau_1(S)] \leq \tau_1(Z(S)) \leq Z(\tau_1(S))$, a contradiction.
\end{proof}

We now fix a maximal torus $\bT$ of $\bG $ and assume that ${\mathbb F} $ has odd characteristic. Let $X(\bT)$ be the group of rational characters of $\bT$, $ Y(\bT)$ the group of one-parameter subgroups of $\bG$, and $<, > : X(\bT) \times Y(\bT) \rightarrow {\mathbb Z} $ the canonical exact pairing. Let $\Phi \subset X(\bT)$ be the set of roots of $\bG$ with respect  to $\bT $, $\Phi^{\vee} $ the  corresponding set of coroots and $ I \subset \Phi $ a set of fundamental roots
corresponding to a Borel subgroup of $ \bG $ containing $\bT$. For each $\alpha \in \Phi $, denote by $\bU_{\alpha}$ the corresponding root subgroup of $\bG$ and let $\phi_{\alpha} : SL_2({\mathbb F}) \to \langle \bU_{\alpha}, \bU_{-\alpha} \rangle $ be a surjective homomorphism such that the image of the group of upper triangular matrices is $\bU_{\alpha}$ and the image of the group of lower triangular matrices is $\bU_{-\alpha}$ (see \cite[Prop.~0.44]{DiMi}). If $\beta \in \Phi $ and $ a \in {\mathbb F} ^{\times} $, then $\beta( \phi_{\alpha}( \begin{smallmatrix} a &0 \\ 0&a^{-1} \end{smallmatrix} ))= a^{<\beta, \alpha^{\vee}>} $.

For $J \subseteq I $, let $\Phi_J $ be the set of roots which are in the subspace of ${\mathbb R} \otimes X(\bT)$ generated by $J$. 
The group $\bM_J:= \langle \bU_{\alpha}, \bU_{-\alpha} \, : \, \alpha \in \Phi_J \rangle $ is the derived subgroup of the Levi subgroup $\bL_J:= \langle \bT, \bM_J\rangle $ of $\bG$ and any Levi subgroup of $\bG $ is conjugate to $\bL_J $ for some $ J \subseteq I$. All components of $\bM_J $ are of type $A_1 $ if and only if
for each $\alpha, \beta \in J $ with $\alpha \ne \beta $, $<\alpha, \beta^{\vee} > =0 $ and in this case,
$\bM_J = \prod_{\alpha \in J} \langle \bU_{\alpha}, \bU_{-\alpha} \rangle $. Further, if $\bG$ is simply connected, then the product $ \prod_{\alpha \in J} \langle \bU_{\alpha}, \bU_{-\alpha} \rangle $ is direct and for all $\alpha \in J$,
$\phi_{\alpha} $ is an isomorphism. In particular, if $\bG $ is simply connected and $char(\mathbb F) $ is odd, then $z_{\alpha}:= \phi_{\alpha} ( \begin{smallmatrix} -1 &0 \\ 0& -1 \end{smallmatrix}) $ is the
 unique central element of order $2$ in $\langle \bU_{\alpha}, \bU_{-\alpha} \rangle $.

\begin{lemma}\label{dynkin-class} Suppose that $char(\mathbb F) $ is odd and that $\bG$ is simple and simply connected. Let $ \emptyset \ne J\subseteq I $ such that if $\alpha, \beta\in J$ with $\alpha \ne \beta $, then $<\alpha, \beta^{\vee}> =0 $. Let $ z:= \prod_{\alpha\in J } z_{\alpha}$ and suppose that $z \in Z(\bG )$.
\begin{enumerate}[(i)]
\item Suppose that $\bG $ is of type $A_n $, $ n\geq 1 $. Let $I =\{\alpha_1, \ldots, \alpha_n \}$, where $\alpha_i, \alpha_{i+1} $, $1\leq i \leq n-1 $ are consecutive nodes in the corresponding Dynkin diagram. Then $n$ is odd and
$ J =\{\alpha_1, \alpha_3, \alpha_5, \ldots, \alpha_{n-2}, \alpha_n \} $.
\item Suppose that $\bG $ is of type $B_n $, $n \geq 2 $. Let $I =\{\alpha_1, \ldots, \alpha_n \}$, where $\alpha_i, \alpha_{i+1} $, $1\leq i \leq n-1 $ are consecutive nodes in the corresponding Dynkin diagram and there is a double arrow from $ \alpha_{n-1} $ to $\alpha_n $. Then $ J =\{ \alpha_n \} $.
\item Suppose that $\bG $ is of type $C_n $, $n \geq 3 $. Let $I =\{\alpha_1, \ldots, \alpha_n \}$, where $\alpha_i, \alpha_{i+1} $, $1\leq i \leq n-1 $ are consecutive nodes in the corresponding Dynkin diagram and there is a double arrow from $ \alpha_{n} $ to $\alpha_{n-1} $. If $n $ is even, then $ J = \{ \alpha_1, \alpha_3, \ldots, \alpha_{n-3}, \alpha_{n-1} \} $.
 If $n $ is odd, then $ J = \{ \alpha_1, \alpha_3, \ldots, \alpha_{n-2}, \alpha_{n} \} $.
\item Suppose that $\bG $ is of type $D_n $, $n \geq 4 $ . Let $I =\{\alpha_1, \ldots, \alpha_n \}$, where $\alpha_i, \alpha_{i+1} $, $1\leq i \leq n-2 $ are consecutive nodes and $\alpha_ n$ and $\alpha_{n-2} $ are connected.
Then either $ J =\{ \alpha_{n-1}, \alpha_n \} $ or $n$ is even and $J$ is one of $ \{ \alpha_1, \alpha_3, \ldots, \alpha_{n-3}, \alpha_{n-1} \} $ or $ \{ \alpha_1, \alpha_3, \ldots, \alpha_{n-3}, \alpha_{n} \} $.
\item Suppose $\bG$ is of type $E_7 $. Let $I =\{\alpha_1, \ldots, \alpha_7\}$, such that $ I -\{ \alpha_2\} $ corresponds to the Dynkin diagram of type $A_6 $ and $ \alpha_2 $ is connected to $\alpha_5$. Then $ J =\{ \alpha_1, \alpha_2, \alpha_4 \} $.
\end{enumerate}
\end{lemma}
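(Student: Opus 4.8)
The plan is to convert the hypothesis $z\in Z(\bG)$ into a system of congruences indexed by the nodes of the Dynkin diagram, and then to solve that system in each of the five cases.

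First I would record that $z=\prod_{\alpha\in J}z_\alpha$ lies in the maximal torus $\bT$, and that an element $t\in\bT$ is central in $\bG$ precisely when $\beta(t)=1$ for every root $\beta$, equivalently for every $\beta\in I$ (as $\Phi\subseteq\mathbb{Z}I$). By the formula for $\beta\circ\phi_\alpha$ on diagonal matrices recorded just before the lemma, $\beta(z_\alpha)=(-1)^{\langle\beta,\alpha^\vee\rangle}$, so that $\beta(z)=(-1)^{\sum_{\alpha\in J}\langle\beta,\alpha^\vee\rangle}$. Hence $z\in Z(\bG)$ is equivalent to
\[\sum_{\alpha\in J}\langle\beta,\alpha^\vee\rangle\equiv 0\pmod 2\quad\text{for every }\beta\in I.\]
For $\beta\in J$ this sum equals $\langle\beta,\beta^\vee\rangle=2$, the remaining terms vanishing by the orthogonality hypothesis on $J$; so only the nodes $\beta\in I\setminus J$ impose a genuine condition.

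Next I would read the condition at a node $\beta\in I\setminus J$ off the diagram. A term $\langle\beta,\alpha^\vee\rangle$ with $\alpha\in J$ is nonzero only if $\alpha$ is adjacent to $\beta$, and then it equals $-2$ (hence $\equiv 0$) exactly when the bond is a double bond with $\alpha$ the short root, and equals $-1$ otherwise (no triple bonds occur among the types in the statement). So the condition becomes: $J$ is a set of pairwise non-adjacent nodes, and every node outside $J$ has an even number of such ``odd-bond'' neighbours lying in $J$. Recording $J$ by its indicator vector $(c_\gamma)_{\gamma\in I}$ over $\mathbb{F}_2$ turns this into a homogeneous $\mathbb{F}_2$-linear system, to be solved subject to $c_\gamma c_\delta=0$ for adjacent $\gamma,\delta$.

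Finally I would solve this system type by type; in the chain diagrams the constraints simply propagate along the diagram from one end. For $A_n$, starting at $\alpha_1$: either $c_1=0$, forcing $c_2=0$, then $c_3=0$, and so on down to the excluded $J=\emptyset$, or $c_1=1$, forcing $c_2=0$, then $c_3=1$, and so on, which gives the odd-indexed nodes and is consistent exactly when $n$ is odd (for $n$ even the constraint at $\alpha_n$ fails). For $B_n$, starting at the short-root node $\alpha_n$: the constraint there forces $c_n=1$ and then $c_{n-1}=c_{n-2}=\cdots=c_1=0$ in turn, giving $J=\{\alpha_n\}$ — the chain stops at once because the bond at $\alpha_{n-1}$ contributes evenly. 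For $C_n$, starting at $\alpha_1$: one again forces the odd-indexed pattern, and the short/long bond at the far end makes it terminate at $\alpha_{n-1}$ when $n$ is even and at $\alpha_n$ when $n$ is odd. For $D_n$ the constraint at $\alpha_{n-1}$ forces $c_{n-2}=0$; one then splits on $(c_{n-1},c_n)$, where $(1,1)$ gives $J=\{\alpha_{n-1},\alpha_n\}$ for every $n$, the values $(1,0)$ and $(0,1)$ propagate back along the chain and close up consistently only for $n$ even, yielding the two ``tail'' solutions, and $(0,0)$ forces $J=\emptyset$. For $E_7$ it is a bounded check that the six constraints force $J=\{\alpha_1,\alpha_2,\alpha_4\}$. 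I expect the main difficulty to be organisational rather than conceptual: keeping the short/long orientations at the double bonds straight, and tracking how the boundary constraints either close up or fail — this is the source of the $n$-odd requirement for $A_n$, the $n$-parity dichotomy for $C_n$, and the case split for $D_n$.
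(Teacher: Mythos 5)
Your proposal is correct and follows essentially the same route as the paper: both reduce $z\in Z(\bG)$ via $\beta(z_\alpha)=(-1)^{\langle\beta,\alpha^\vee\rangle}$ to parity conditions at the simple roots (the paper phrases the mod-$2$ condition as ``if $\langle\beta,\alpha^\vee\rangle$ is odd and all other pairings with $J$ are even, then $\alpha\notin J$'' rather than as an $\mathbb{F}_2$-linear system), combine this with non-adjacency of the elements of $J$, and then propagate along the Dynkin diagram type by type, with the same endpoint analysis producing the parity conditions on $n$ and the $D_n$ case split.
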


\begin{proof} Let $\alpha, \beta \in I$. Then as explained above, $\beta(z_{\alpha})= (-1 )^{< \beta, \alpha^{\vee} >}$.
By hypothesis, $z \in Z(\bG)$ whence $ \prod_{\alpha \in J } \beta(z_{\alpha})= \beta(z) =1 $ for all $ \beta\in \Phi $ (see
\cite[Prop~0.35]{DiMi}).
So, if $< \beta, \alpha^{\vee} >$ is odd (that is equals $-1$ or $-3$) and for all $\gamma \in J $ different from $\alpha $, $ < \beta, \gamma^{\vee} > $ is even (that is equals $0$ or $\pm 2 $) ,
then $\alpha \notin J $. We will systematically use this observation. Also note that since the elements of $J$ are pairwise orthogonal, $J$ does not contain any pair of consecutive nodes.

Suppose first that $\bG $ is of type $A_n$, $n\geq 1 $. Then, $<\alpha_i, \alpha_j^{\vee} > $ is odd if and only if $ j =i\pm 1$.
So, $\alpha _2 $ is the unique element of $I$ such that $<\alpha_1, \alpha_2^{\vee}> $ is odd. By the observation above $\alpha_2 \notin J $. We claim that $\alpha_1\in J $.
Indeed, suppose not and let $ i $ be the least integer such that $\alpha_i \in J$. Then $i \geq 3 $ and $ <\alpha_{i-1}, \alpha_j^{\vee} > $ is odd if and only if $ j =i-2 $ or $ i $.
Since $\alpha_{i-2} \notin J $, it follows that $ i\notin J$, a contradiction. Thus, $\alpha_1\in J$ and $\alpha_2 \notin J $. The result follows by repeating the argument.

Suppose that $\bG $ is of type $B_n $, $n \geq 2$. Then, $ <\alpha_i, \alpha_j^{\vee} > $ is odd if and only if $ 1\leq i, j \leq n-1 $ and $j = i\pm 1 $ or $ i=n$ and $j=n-1$.
In particular, $<\alpha_n, \alpha_j^{\vee} > $ is odd if and only if $j =n-1 $, hence $\alpha_{n-1}\notin J$. If $n=2 $, the result is proved.
Suppose that $n \geq 3 $. Then $<\alpha_{n-1}, \alpha_j^{\vee} > $ is odd if and only if $j=n-2 $, hence $\alpha_{n-2} \notin J $. Suppose that $n \geq 4 $ and let $ i $ be the greatest integer such that $ i\leq n- 3 $ and $ i \in J$. Then $<\alpha_{i+1}, \alpha_j^{\vee} > $ is odd if and only if $ j=i $ or $ j =i+2 $. By maximality of $i $, $ n-1 \geq i+2 \notin J $, hence $ i \notin J $, a contradiction. Thus, $ J =\{ \alpha_n \} $.

Suppose that $\bG $ is of type $C_n $, $n \geq 3$. Then, $ <\alpha_i, \alpha_j^{\vee} > $ is odd if and only if $ 1\leq i, j \leq n-1 $ and $j =i\pm 1 $ or $ i=n-1$ and $j=n$. Suppose first that $n-1 \in J $. Then $ n-3 \in J$ as $ <\alpha_{n-2}, \alpha_j^{\vee} > $ is odd if and only if $ j=n-1 $ or $n-3 $. Continuing like this, we get that $ n$ is even and $ J =\{ \alpha_1, \alpha_3, \ldots, \alpha_{n-1} \} $. Now suppose that $ \alpha_{n-1} \notin J $. We claim that $\alpha_n\in J$. Indeed, suppose not and let $ i $ be the greatest integer such that $\alpha_i \in J $. Then $ i \leq n-2 $, and $<\alpha_{i+1}, \alpha_j^{\vee}> $ is odd if and only if either $j=i $ or $ j= i+2 $. Since $i+2 \notin J $, $ i \notin J$, a contradiction. Thus, $ \alpha_n \in J $ from which it follows that $n-2 \in J$. Continuing, one obtains that $ n$ is odd and $ J =\{ \alpha_1, \alpha_3, \ldots, \alpha_{n} \} $.

Suppose that $\bG $ is of type $D_n $. Then, $\alpha_{n-2} \notin J $ since $<\alpha_{n}, \alpha_j^\vee > $ is odd if and only if $ j=n-2 $. Suppose first that $\alpha_{n-3} \notin J$. Then it follows that $\alpha_i \notin J $ for any $i \leq n- 3 $ whence $ J \subseteq \{\alpha_{n-1}, \alpha_n \}$ and consequently $ J = \{ \alpha_{n-1}, \alpha_n \} $. The case that $\alpha_{n-3} \in J $ leads to the conclusion that $ n $ is even and $ J$ is one of the two sets claimed.

Finally suppose that $\bG $ is of type $E_7 $. Then, $<\alpha_1, \alpha_j^{\vee}> $ is odd if and only if $ j =3 $, hence $\alpha_3 \notin J $. Also, $<\alpha_4, \alpha_j^{\vee} > $ is odd if and only if $j=3 $ or $j=5 $, hence
$\alpha_5 \notin J $. Since
 $<\alpha_6, \alpha_j^{\vee} > $ is odd if and only if $j=7 $ or $j=5 $, $\alpha_7 \notin J $. Since $<\alpha_7, \alpha_j^{\vee} > $ is odd if and only if $j=6 $, $\alpha_6\notin J $. Similarly, it follows that $\alpha_2 \in J$ if and only
if $\alpha_4 \in J$ if and only if $\alpha_1 \in J $. Hence $ J =\{ \alpha_1, \alpha_2, \alpha_4 \} $ as claimed.
\end{proof}

\begin{lemma}\label{dynkin-con} Keep the notation and hypothesis of Lemma \ref{dynkin-class}.
Let $\bL_J = \langle \bT, \bU_{\alpha}, \bU_{-\alpha}: \alpha \in J \rangle $ be the Levi subgroup corresponding to $J$.
\begin{enumerate}[(i)]
\item If $\bG$ is of type $B_n$, $n\geq 2 $, or $C_n $, $n \geq 3$ and $n$ even, then $Z(\bL_J)$ is connected.
\item If $\bG$ is of type $C_n $, $n$ odd, then the components of $\bL $ are not transitively permuted by $N_W(W_J)$.
\end{enumerate}
\end{lemma}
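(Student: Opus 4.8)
The plan is to reduce part (i) to a lattice-theoretic computation and to settle part (ii) via root lengths.

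For (i), I would start from the standard description of the centre of a Levi: with $\bT$ our fixed maximal torus of $\bG$ we have $Z(\bL_J) = \bigcap_{\alpha \in \Phi_J} \ker(\alpha\colon \bT \to {\mathbb F}^\times)$. Since $\Phi_J$ is a root system with base $J$, its $\mathbb{Z}$-span is $\mathbb{Z}J$, so under the identification $\bT \cong \Hom(X(\bT),{\mathbb F}^\times)$ this becomes $Z(\bL_J) \cong \Hom(X(\bT)/\mathbb{Z}J,{\mathbb F}^\times)$; for a finitely generated abelian group $M$ the group $\Hom(M,{\mathbb F}^\times)$ is a torus, in particular connected, whenever $M$ is torsion-free. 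Because $\bG$ is simply connected, $X(\bT)$ is the weight lattice $P(\Phi)$, so it suffices to check that $P(\Phi)/\mathbb{Z}J$ is torsion-free in the two cases at hand, which I would do in the usual coordinates. For $B_n$, where $P(\Phi) = \mathbb{Z}^n + \mathbb{Z}\cdot\tfrac12(e_1 + \cdots + e_n)$ in $\mathbb{R}^n$ with $\alpha_i = e_i - e_{i+1}$ for $i<n$ and $\alpha_n = e_n$, Lemma~\ref{dynkin-class}(ii) gives $J = \{\alpha_n\}$; the vector $e_n$ is indivisible in $P(\Phi)$, so $\mathbb{Z}\alpha_n$ is a direct summand and $P(\Phi)/\mathbb{Z}\alpha_n$ is free. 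For $C_n$ with $n$ even, where $P(\Phi) = \mathbb{Z}^n$ with $\alpha_i = e_i - e_{i+1}$ for $i<n$ and $\alpha_n = 2e_n$, Lemma~\ref{dynkin-class}(iii) gives $J = \{e_1 - e_2,\, e_3 - e_4,\, \ldots,\, e_{n-1} - e_n\}$; the unimodular change of basis replacing each $e_{2i-1}$ by $e_{2i-1} - e_{2i}$ exhibits $P(\Phi)/\mathbb{Z}J \cong \mathbb{Z}^{n/2}$, again free. In both cases $Z(\bL_J)$ is connected.

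For (ii), since the roots of $J$ are pairwise orthogonal, $[\bL_J,\bL_J] = \bM_J = \prod_{\alpha \in J} \langle \bU_\alpha, \bU_{-\alpha}\rangle$, so the components of $\bL_J$ are exactly the rank-one subgroups $\langle \bU_\alpha, \bU_{-\alpha}\rangle$, $\alpha \in J$. An element $w$ of $N_W(W_J)$ permutes the rank-one subsystems $\{\pm\alpha\}$, $\alpha \in J$, of $\Phi_J$, hence sends $\langle \bU_\alpha, \bU_{-\alpha}\rangle$ to $\langle \bU_{w\alpha}, \bU_{-w\alpha}\rangle$ with $w\alpha \in \pm J$; as $w \in W$ preserves root lengths, it cannot carry a short-root component to a long-root one. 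By Lemma~\ref{dynkin-class}(iii), $J = \{\alpha_1, \alpha_3, \ldots, \alpha_{n-2}, \alpha_n\}$, and in the standard coordinates $\alpha_1 = e_1 - e_2$ is short whereas $\alpha_n = 2e_n$ is long; since $n \geq 3$, both lie in $J$. Hence $\langle \bU_{\alpha_n}, \bU_{-\alpha_n}\rangle$ is not in the $N_W(W_J)$-orbit of $\langle \bU_{\alpha_1}, \bU_{-\alpha_1}\rangle$, so $N_W(W_J)$ does not permute the components of $\bL_J$ transitively.

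The main obstacle is part (i): one has to be careful to identify $Z(\bL_J)$ with the character group $\Hom(X(\bT)/\mathbb{Z}J,{\mathbb F}^\times)$ and then to verify, using the exact form of $J$ coming from Lemma~\ref{dynkin-class} and the explicit shape of the weight lattice, that $\mathbb{Z}J$ is a direct summand of $P(\Phi)$ — this is where the lattice bookkeeping really enters, and where the hypothesis that $\bG$ is simply connected is used. Part (ii) is comparatively soft, relying only on the fact that $W$ acts by isometries and so preserves root lengths.
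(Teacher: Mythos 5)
Your argument is correct and follows essentially the same route as the paper: part (i) is the same reduction to torsion-freeness of $X(\bT)/\mathbb{Z}J$ (which the paper cites from \cite[Lemma 13.14]{DiMi}) followed by the same weight-lattice computation in standard coordinates, with your primitivity/unimodular-basis bookkeeping replacing the paper's explicit bases, and part (ii) is the identical short-root versus long-root observation for $\alpha_1$ and $\alpha_n$. No substantive differences to report.
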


\begin{proof} (i) Let $P$ be the subgroup of $ X(\bT )$ generated by the $\alpha_i, $ $i \in J$. It suffices to show that
$ X(\bT)/P$ is torsion-free (see for instance \cite[Lemma 13.14]{DiMi}). Keep the labelling of the fundamental roots introduced in Lemma \ref{dynkin-class}. Let $q_i$, $1\leq i \leq n $ be the set of fundamental weights corresponding to $\Phi$, $\Phi^{\vee}$ and $I$. Thus $q_i $ are vectors in ${\mathbb R}\otimes X(\bT ) $ defined by
$ < q_i, \alpha_j^{\vee} > =\delta_{i,j} $, $1\leq i, j\leq n$.
Since $\bG$ is simply connected, $X(\bT )$ equals the subgroup of ${\mathbb R}\otimes X(\bT ) $ generated by the fundamental weights.

Suppose that $ \bG$ is of type $B_n$, $n \geq 2 $.
Then ${\mathbb R}\otimes X(\bT ) $ may be identified with an $n$-dimensional Euclidean space with an orthonormal basis $ e_1, \ldots,e_n $ such that under this identification,
\[ \alpha_1 = e_1-e_2, \ldots, \alpha_{n-1} = e_{n-1}- e_{n}, \alpha_n= e_n \]
and
\[ q_1= e_1, q_2= e_1 + e_2, \ldots, q_{n-1}= e_1 + \cdots + e_{n-1}, q_n = \frac{1}{2} (e_1 +e_2 +\cdots + e_n). \]
So, $X(\bT)$ is generated by $e_1, e_2, \ldots e_n, q_n $ and $e_1, e_2, \ldots, e_{n-1}, q_n $ is a basis of $X(\bT)$.
By Lemma~\ref{dynkin-class}, $P= {\mathbb Z} e_n $. So, $X(\bT) /P $ is free with basis $e_1, \ldots, e_{n-2}, q_n $.

Suppose that $ \bG$ is of type $C_n$, $n \geq 3 $. Then ${\mathbb R}\otimes X(\bT ) $ may be identified with an $n$-dimensional Euclidean space with an orthonormal basis $ e_1, \ldots, e_n $ such that under this identification,
\[ \alpha_1 = e_1-e_2, \ldots, \alpha_{n-1} = e_{n-1}- e_{n}, \alpha_n= 2e_n \]
and
\[ q_1= e_1, q_2= e_1 + e_2, \ldots, q_{n-1}= e_1 + \cdots + e_{n-1}, q_n = e_1 +e_2 +\cdots + e_n. \]
So, $e_1, e_2, \ldots, e_{n-1}, e_n $ is a basis of $X(\bT)$.

If $n$ is even, then by Lemma~\ref{dynkin-class},
\[P={\mathbb Z} (e_1-e_2) \oplus {\mathbb Z} (e_3-e_4) \oplus \cdots \oplus {\mathbb Z} (e_{n-1}-e_{n})\]
 and $X(\bT) /P $ is free with generators $ e_1 +P, e_2 +P, \ldots, e_{n-2} +P $.

(ii) Suppose that $\bG$ is of type $C_n $, $n$ odd. By Lemma \ref{dynkin-class}, $J=\{ \alpha_1, \alpha_3, \ldots, \alpha_{n-2}, \alpha_n\}$.
 Now, $\alpha_n= 2e_n $, $\alpha_i= e_i -e_{i+1} $ for $1\leq i \leq n-1 $ and for any $i$, $1\leq i \leq n $ and any $w\in W$, $\,^w (e_i) =\pm e_j $, hence $\alpha_n $ is not in the same $W$-orbit as $\alpha_{i}$ for any $i \leq n-1$.
\end{proof}

 We assume from now on that $\bT$ is $F$-stable. Recall that  to any $F$-stable Levi subgroup $\bL$ of $\bG$ is associated a pair $(J, w )$, where $ w \in W$, $ J \subset I $ such that $\,^{wF}J = J $ and $ \bL = \,^g \bL_J $ for some $g \in \bG$ with $g^{-1} F(g) \in N_{\bG}(\bT) $ whose image in $ W=N_{\bG}(\bT)/\bT $ is $w$. Moreover, $ \bL ^F \cong \bL_J ^{wF} $ (see \cite[Prop.~4.3]{DiMi}).

\begin{proposition}\label{2abelian-levi} Suppose that ${\mathbb F} $ has odd characteristic, $\bG$ is simple and simply connected and that $\bT$ is $F$-stable. Let $\bL $ be an $F$-stable non-toral Levi subgroup of $\bG$, $Z$ a central $2$-subgroup of $G$.
Suppose that the Sylow $2$-subgroups of $\bL^F/Z $ are abelian. Then,
\begin{enumerate} [(i)] \item $F$ has only one orbit on the set of components of $[\bL, \bL]$.
\item $[\bL, \bL]^F \cong SL_2 (q^t) $, $q \equiv \pm 3\pmod{8}$, where $t$ is odd and equals the number of components of $[\bL, \bL]$. Further, $ \bL^ F $ is a direct product of $[\bL, \bL]^F $ and $Z^{\circ}(\bL)^F$.
\end{enumerate}
Let $P$ be a Sylow $2$-subgroup of $\bL^F$ and write $P=P_0 \times P_1 $ where $P_0 \leq Z^{\circ}(\bL)^F $ and $P_1\leq [\bL, \bL]^F$.
Then $P_1$ is quaternion of order $8$, $ Z \cap P_1$ is the central subgroup of $[\bL, \bL]^F$ and $P_1/(Z\cap P_1)$ is a Klein $4$-group.
Moreover, one of the following holds
\begin{enumerate}[(a)]\item $\bG $ is of type $A_n $, $n+1= 2t$, $P_0 $ is trivial and $Z$ has order $2$. In particular, $P/Z \cong C_2 \times C_2 $.
\item $\bG $ is of type $D_n $, $n =2t$, $ G $ is of untwisted type $D_n(q) $ and either $Z$ is of order $2$ or a Klein $4$-group. If $ q\equiv 3\pmod{4}$, then $P_0 $ has order $2$ and $P/Z $ is elementary abelian of order $8$ or a Klein $4$-group, depending on whether $Z$ has order $2$ or $4$. If $ q\equiv 1\pmod{4}$, then $P_0 $ has order $4$ and $P/Z$ is isomorphic to $C_4 \times C_2 \times C_2 $ or is elementary abelian of order $8$, depending on whether $Z$ has order $2$ or $4$.
\item $\bG $ is of type $E_7$ and $t=3$.
\end{enumerate}
\end{proposition}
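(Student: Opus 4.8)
The plan is to begin with the standard reduction to a split Levi: by the parametrisation of $F$-stable Levi subgroups recalled above, after replacing $\bL$ by $\bL_J$ and $F$ by $wF$ we may assume $\bL=\bL_J$ for some $\emptyset\ne J\subseteq I$ with $\,^{wF}J=J$, and we rename $wF$ as $F$. Since $\bG$ is simply connected, $Z\le Z(\bG^F)=Z(\bG)^F\le\bT^F\le\bL^F$ and $[\bL,\bL]=\bM_J$ is simply connected. As $\bL$ is non-toral, $\bL^F$ has non-abelian Sylow $2$-subgroups by Lemma~\ref{abelian-cyclic-quotient}(i), so Lemma~\ref{abelian-cyclic-quotient}(ii) applies to $(\bL,F,Z)$, and I would record its output: every component of $\bL$ has type $A_1$, so $[\bL,\bL]=\prod_{\alpha\in J}\langle\bU_\alpha,\bU_{-\alpha}\rangle$ is a direct product of copies of $SL_2(\mathbb F)$ with $Z([\bL,\bL])=\prod_{\alpha\in J}\langle z_\alpha\rangle\cong(\mathbb Z/2)^{|J|}$; and for each $F$-orbit $\mathcal O$ of components, with index set $J_{\mathcal O}\subseteq J$, the group $(\prod_{\bX\in\mathcal O}\bX)^F$ is isomorphic to $SL_2(q^{|J_{\mathcal O}|})$ with $q^{|J_{\mathcal O}|}\equiv\pm3\pmod{8}$, while $Z\cap(\prod_{\bX\in\mathcal O}\bX)^F\ne 1$ and $Z^{\circ}(\bL)^F\cap(\prod_{\bX\in\mathcal O}\bX)^F=1$.

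To prove part~(i) I would show $F$ has a unique orbit. For a fixed orbit $\mathcal O$, pick a generator $\zeta$ of $Z\cap(\prod_{\bX\in\mathcal O}\bX)^F$; as $\zeta\in Z\le Z(\bG)$ centralises $\prod_{\bX\in\mathcal O}\bX$ it lies in $\prod_{\alpha\in J_{\mathcal O}}\langle z_\alpha\rangle$, and since it is a nontrivial $F$-fixed element, $F$ permutes $J_{\mathcal O}$ transitively, and the $z_\alpha$ are independent, we must have $\zeta=\prod_{\alpha\in J_{\mathcal O}}z_\alpha$. Hence $\prod_{\alpha\in J_{\mathcal O}}z_\alpha\in Z(\bG)$, so $J_{\mathcal O}$ satisfies the hypothesis of Lemma~\ref{dynkin-class}, and multiplying over the orbits gives $\prod_{\alpha\in J}z_\alpha\in Z(\bG)$, so $J$ does too. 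Since $\prod_{\alpha\in J}z_\alpha$ is a nontrivial $2$-element while $Z(\bG)$ has no involution for types $E_6,E_8,F_4,G_2$, the group $\bG$ is of type $A_n,B_n,C_n,D_n$ or $E_7$, and both $J$ and each $J_{\mathcal O}$ are among the sets listed in Lemma~\ref{dynkin-class}. A direct inspection shows that none of the listed sets is a disjoint union of two or more nonempty listed sets; since the $J_{\mathcal O}$ partition $J$, there is a single orbit. I set $t:=|J|$, the number of components. I expect this step — the simultaneous use of Lemma~\ref{dynkin-class} for $J$ and for each $J_{\mathcal O}$, together with the ``no nontrivial decomposition'' check — to be the main obstacle.

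For part~(ii) and the Sylow data: with one orbit, $[\bL,\bL]^F\cong SL_2(q^t)$ with $q^t\equiv\pm3\pmod{8}$; as odd squares are $\equiv1\pmod{8}$, $t$ is odd, so $q^t\equiv q\pmod{8}$ and $q\equiv\pm3\pmod{8}$. For the splitting $\bL^F=Z^{\circ}(\bL)^F\times[\bL,\bL]^F$ I would use $Z^{\circ}(\bL)^F\cap[\bL,\bL]^F=1$ together with the fact that $H:=Z^{\circ}(\bL)\cap[\bL,\bL]\le Z([\bL,\bL])\cong(\mathbb Z/2)^t$ is a $2$-group on which $\langle F\rangle$ acts through a cyclic quotient of order dividing the odd number $t$, whence $H^1(\langle F\rangle,H)=0$ and $\bL^F=Z^{\circ}(\bL)^F\cdot[\bL,\bL]^F$ is a direct product; this gives~(ii). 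Writing $P=P_0\times P_1$ with $P_0\in\Syl_2(Z^{\circ}(\bL)^F)$ (abelian) and $P_1\in\Syl_2(SL_2(q^t))$: from $q^t\equiv\pm3\pmod{8}$ one gets $|SL_2(q^t)|_2=8$, so $P_1\cong Q_8$; moreover $Z\cap P_1=Z\cap[\bL,\bL]^F$ is a nontrivial subgroup of $Z(SL_2(q^t))$, hence equals $Z(P_1)$ of order $2$, so $P_1/(Z\cap P_1)$ is Klein four. Finally $Z\le Z(\bL^F)=Z^{\circ}(\bL)^F\times Z(P_1)$ and $Z(P_1)\le Z$ force $Z=Z_0\times Z(P_1)$ with $Z_0:=Z\cap Z^{\circ}(\bL)^F\le P_0$, so $P/Z\cong(P_0/Z_0)\times(\mathbb Z/2)^2$.

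It remains to run the case analysis, which also eliminates types $B$ and $C$. If $\bG$ has type $B_n$, or type $C_n$ with $n$ even, then $Z(\bL_J)$ is connected by Lemma~\ref{dynkin-con}(i), so $Z([\bL,\bL])\le Z^{\circ}(\bL)$ and the $F$-fixed element $\prod_{\alpha\in J}z_\alpha$ lies in $Z^{\circ}(\bL)^F\cap[\bL,\bL]^F=1$, a contradiction; if $\bG$ has type $C_n$ with $n$ odd, Lemma~\ref{dynkin-con}(ii) contradicts part~(i). So $\bG$ has type $A_n$, $D_n$ or $E_7$, and Lemma~\ref{dynkin-class} determines $J$. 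In type $A_n$, $J=\{\alpha_1,\alpha_3,\dots,\alpha_n\}$, so $n+1=2t$; here $\prod_{\alpha\in J}z_\alpha=-I$ generates $\Syl_2(Z(\bG^F))$, which has order $2$ because $(n+1)_2=2$, whence $|Z|=2$, and $|Z^{\circ}(\bL_J)^F|$ — being, up to a possible unitary twist, $(q^t\mp1)/(q\mp1)$, an ordinary or alternating sum of $t$ odd terms with $t$ odd — is odd, so $P_0=1$ and $P/Z\cong C_2\times C_2$: case~(a). In type $D_n$, oddness of $t=|J|$ rules out $J=\{\alpha_{n-1},\alpha_n\}$, so $n$ is even and $J$ is one of the two length-$n/2$ sets of Lemma~\ref{dynkin-class}(iv), with $n=2t$; in orthonormal coordinates the roots of $J$ form $\{e_{2i-1}-e_{2i}:1\le i\le t\}$ up to the graph automorphism, and any element of $W(B_n)$ stabilising this set of roots makes an even number of sign changes, hence lies in $W(D_n)$ — so $F$ does not involve the graph automorphism and $G$ is of untwisted type $D_n(q)$; then a direct computation shows $Z^{\circ}(\bL_J)^F$ is cyclic of order $q^t-1$ with $2$-part $(q-1)_2$ (as $t$ is odd), giving $|P_0|=2$ or $4$ according as $q\equiv3$ or $1\pmod{4}$, while $Z\le Z(\bG^F)\cong(\mathbb Z/2)^2$ forces $|Z|\in\{2,4\}$ with $|Z_0|=|Z|/2$; the four displayed isomorphism types of $P/Z$ then follow from $P/Z\cong(P_0/Z_0)\times(\mathbb Z/2)^2$ with $P_0$ cyclic: case~(b). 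In type $E_7$, $J=\{\alpha_1,\alpha_2,\alpha_4\}$, so $t=3$: case~(c).
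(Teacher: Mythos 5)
Your proposal follows essentially the same route as the paper's proof: reduce to $\bL_J$ with a twisted Frobenius, apply Lemma \ref{abelian-cyclic-quotient} to $\bL$, use Lemmas \ref{dynkin-class} and \ref{dynkin-con} to pin down $J$ type by type, and get the structure of $Z^{\circ}(\bL)^F$ from Carter's order formulas. Your variant of part (i) (each orbit $J_{\mathcal O}$ and $J$ itself satisfy the hypothesis of Lemma \ref{dynkin-class}, and no listed set is a disjoint union of two or more listed sets) is a valid repackaging of the paper's per-type argument, and your explicit $W(B_n)$-parity computation ruling out the twisted form in type $D_n$ is a concrete version of the paper's assertion that $\,^{wF}J\ne J$ there; note also that since $n=2t$ with $t$ odd forces $n\equiv 2\pmod 4$, the case $n=4$ (triality) never arises, so your parity argument suffices. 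Asserting the orders of $Z^{\circ}(\bL)^F$ in types $A$ and $D$ (``up to a possible unitary twist'', ``a direct computation shows'') is acceptable but these are exactly the formulas the paper takes from \cite{carter78}, so cite or verify them.

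The one step whose justification is wrong as written is the splitting $\bL^F=Z^{\circ}(\bL)^F\times[\bL,\bL]^F$. The obstruction to surjectivity of the multiplication map $Z^{\circ}(\bL)^F\cdot[\bL,\bL]^F\to\bL^F$ is not the group cohomology $H^1$ of the finite cyclic group through which $F$ acts on $H:=Z^{\circ}(\bL)\cap[\bL,\bL]$, but the Lang-type quotient $H/(F-1)H$, whose order equals $|H^F|$; coprimality of $t$ with $|H|$ does not kill it (for $H=Z([\bL,\bL])\cong(\mathbb Z/2)^t$ with the same cyclic permutation action one has $H/(F-1)H\cong\mathbb Z/2\ne 0$), so the inference ``whence $H^1(\langle F\rangle,H)=0$'' proves nothing as stated. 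The conclusion is nevertheless correct, and the repair is immediate from the other ingredient you already cite: $H^F=(Z^{\circ}(\bL)\cap[\bL,\bL])^F=Z^{\circ}(\bL)^F\cap[\bL,\bL]^F=1$ by Lemma \ref{abelian-cyclic-quotient}(ii), which is exactly the vanishing of the relevant obstruction; alternatively argue as the paper does, from $|\bL^F|=|Z^{\circ}(\bL)^F|\,|[\bL,\bL]^F|$ together with the trivial intersection.
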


\begin{proof} Note that $|\bL^F|= |Z^{\circ}(\bL)^F| |[\bL, \bL]^F|$ . So, if $Z^{\circ} (\bL ) \cap [\bL, \bL]^F =1$, then
$ \bL ^F $ is a direct product of $Z^{\circ} (\bL ) ^F $ and $ [\bL, \bL]^F $. Thus, (ii) is a consequence of (i) and Lemma~\ref{abelian-cyclic-quotient}.

We prove (i). By Lemma~\ref{abelian-cyclic-quotient} (applied to $\bL$), $Z$ and hence $O_2(G) $ is non-trivial. Hence, $\bG$ is of type $A_n $, $B_n $, $C_n $, $D_n $ or $ E_7 $. Similarly we may also assume that $G$ is not of type $\,^3D_4 (q)$.
Let $ (J, w) $ be associated to $\bL $ as explained above.
The $F$-orbits of $\bL $ on the components of $\bL$ correspond to $wF$-orbits of $J$. Hence it suffices to prove that there is only one $wF$-orbit on $J$.
Let $J_1 \subset J $ be a $wF$-orbit of $J$ of size $t$. By Lemma \ref{abelian-cyclic-quotient},
$[\bL_{J_1}, \bL_{J_1}] ^{wF} \cong [\bL, \bL]^F \cong SL_2(q^t) $, where $ q^t \equiv \pm 3\pmod{8}$. So, $ t $ is odd. Also by Lemma~\ref{abelian-cyclic-quotient} and transport of structure, we have
$[\bL_{J_1}, \bL_{J_1}]^{wF} \cap Z \ne 1 $ (here note that any $\bG$-conjugate of $Z$ equals $Z$). So, in particular
$[\bL_{J_1}, \bL_{J_1}] \cap Z(\bG) \ne 1 $. We now apply Lemma \ref{dynkin-class} and Lemma~\ref{dynkin-con} to $J_1$ (note that by Lemma~\ref{abelian-cyclic-quotient}, all components of $\bL_J $ have rank $1$).

Suppose $\bG$ is of type $A_n $. Then by Lemma \ref{dynkin-class}, $n$ is odd and $J_1 = \{ \alpha_1, \alpha_3, \ldots, \alpha_n\} $. So,
$n+1 =2t$. It also follows that $J=J_1 $, since $J_1 $ is the only subset of $J$ with the required properties. Thus, all statements for type $A_n$ are proved except the assertion on $P_1 $ being trivial. For this, we use the order formulas for
$Z^{\circ}(\bL)^{wF}$ given in \cite{carter78}. By Proposition 7 and 8 of \cite{carter78}, we see that $|Z^{\circ} ( \bL)^F|= \frac{q^t -1}{q-1} $ if
$ G $ is untwisted and $|Z^{\circ} ( \bL)^F|=\frac{q^t +1}{q+1} $ if $G$ is twisted. Since $t$ is odd, in either case, we have that
$|Z^{\circ} ( \bL)^F|$ is odd. This proves the proposition for groups of type $A_n$.

Suppose $\bG$ is of type $B_n $. Then by Lemma \ref{dynkin-class}, $J=J_1=\{\alpha_{n} \} $. By Lemma \ref{dynkin-con}, $ Z^{\circ} (\bL_I) $ is connected, hence $ Z\leq Z^{\circ} (\bL_I) $. But by Lemma \ref{abelian-cyclic-quotient}
$Z \cap [\bL_J, \bL_J ] ^{wF} \ne 1 $ and $Z^{\circ} (\bL ) \cap [\bL_J, \bL_J ] ^{wF} = 1 $, a contradiction.

Suppose $\bG$ is of type $C_n $. If $n$ is even, then by Lemma~\ref{abelian-cyclic-quotient} we are done by the same argument as for type $B_n $.
Suppose that $n$ is odd. Since $\bG$ is of type $C_n $, we may assume that $F$ acts trivially on $I$.
Then, by Lemma \ref{dynkin-con} (ii), we get that there are at least two $wF$-orbits on $J$.
But $Z\leq Z(\bG)$ is cyclic, and by Lemma \ref{abelian-cyclic-quotient}, for each $wF$-orbit $J'$ of $J$, $[\bL_{J'}, \bL_{J'} ]^F$ intersects $Z$ non-trivially, a contradiction.

Suppose $\bG $ is of type $D_n $. Since $|J_1|=t $ is odd, $J_1 \ne \{ \alpha_{n-1}, \alpha_{n} \} $. Hence, by Lemma~\ref{dynkin-class},
$n $ is even and $J_1$ is one of $\{ \alpha_1, \alpha_3, \ldots, \alpha_{n-3}, \alpha_{n-1}\} $ or $\{ \alpha_1, \alpha_3, \ldots, \alpha_{n-3}, \alpha_{n} \}$. In any case $J=J_1 $ and $n=2t $. If $G $ is of type $\,^2D_n $, we may assume that $F(\alpha_{n-1} ) = \alpha_n $ and $F(\alpha_i)=\alpha_i $ for all $i$, $1\leq i\leq n-2 $. But then $ \,^wF(J) \ne J$ for any $w\in W$, hence there is no $F$-stable Levi subgroup corresponding to $J$. By \cite[Prop.~10]{carter78}, $ Z^{\circ} (\bL)^F \cong Z^{\circ} (\bL_I)^{wF} $ is cyclic of order $ (q^t -1)$ which proves all the assertions for the case $\bG $ is of type $D_n$.

If $\bG $ is of type $E_7$, then by Lemma \ref{dynkin-class}, $J=J_1 =\{ \alpha_1, \alpha_2, \alpha_4\}$.
\end{proof}


\section{On Bonnaf\'e-Rouquier equivalences and central extensions}
\label{br}

 We keep the notation of Section 3. We assume throughout this section also that the fixed point subgroup of no $F$-orbit of components of $\bG$ is isomorphic to a Suzuki or Ree group. We assume in this section that $\ell $ is a prime different from $p$. We briefly recall the standard set-up for describing the representation theory of $G$.

Let $\bT$ be an $F$-stable maximal torus of
$\bG$, and $\bG^*$ be a group in duality with $\bG$ with respect to $\bT$ and with corresponding Steinberg homomorphism again denoted by $F$. Set $G^*=\bG^{*F}$. By the fundamental results of Lusztig, the set of complex
irreducible characters of $G$ is a disjoint union of rational Lusztig series ${\mathcal E}(G, s)$, where
$s$ runs over semi-simple elements of $G^*$ up to conjugation. Further, by results of Brou\'e-Michel and Hiss, for any $\ell$-block $B$ of $G$, there is a unique $G^*$-conjugacy class of $\ell' $-elements $s$ such that $B$ contains a complex irreducible character in ${\mathcal E}(G, s)$. We will call such an $s$ a semi-simple label of $B$.

Let $\bL$ be an $F$-stable Levi subgroup of $\bG$ such that $C_{\bG^*} (s) \leq \bL^*$, where $\bL^*$ is a Levi subgroup of $\bG^*$ in duality with $\bL$.
Then Lusztig induction provides a one-one correspondence between $\ell$-blocks of $\bL^F$ with semi-simple label $s$ and $\ell$-blocks of $G$ with semi-simple label $s$.
If a block $B$ of $ G$ and a block $C$ of $\bL^F $ are related in this way, then $B$ and $C$ are said to be Bonnaf\'e-Rouquier correspondents
(see Definition 7.7 of \cite{kessarmalle}).
By \cite[Theorem B', \S 11]{BonRou}, if $B$ and $C$ are Bonnaf\'e-Rouquier correspondents, then $B$ and $C$ are Morita equivalent. We show next that this equivalence is preserved on passing to quotients by central $\ell$-subgroups.

\begin{proposition}\label{bonnrou-cent} Let $\bL$ be an $F$-stable Levi subgroup of $\bG$,  $L :=\bL^F$,  $B$ a block of $\CO G$ and $C$ a  block of $\CO L $ in Bonnaf\'e-Rouquier correspondence with $B$. Let $ Z $ be a central $\ell$-subgroup of $G$ contained in $L$, and let $\bar B$ (respectively $\bar C$) be the block of $\CO (G /Z)$ (respectively $\CO (L/Z)$) corresponding to $B$ (respectively $C$).
Then $\bar B$ and $\bar C$ are Morita equivalent.
\end{proposition}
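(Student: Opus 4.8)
The plan is to realise the Bonnaf\'e--Rouquier Morita equivalence between $B$ and $C$ by a bimodule on which the central $\ell$-subgroup $Z$ acts identically from the left and from the right, and then to quote Lemma~\ref{moritaquotient}. Recall the geometric origin of this equivalence: letting $b$, $c$ be the block idempotents of $B$, $C$ and $s$ a semi-simple label of $B$ (equivalently of $C$), fix a parabolic subgroup $\mathbf{P}$ of $\bG$ with (the given $F$-stable) Levi complement $\bL$ and unipotent radical $\bU$, and form the Deligne--Lusztig variety
\[ \bY_{\bU} \ = \ \{\, g\bU \in \bG/\bU \ : \ g^{-1}F(g) \in \bU\cdot F(\bU)\,\}, \]
which carries commuting actions of $G = \bG^F$ by left translation and of $L = \bL^F$ by right translation (the latter being well defined because $\bL$ normalises $\bU$). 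Then $H^{*}_c(\bY_{\bU},\CO)$ is an $(\CO G,\CO L)$-bimodule, and by \cite[Theorem B', \S 11]{BonRou} --- the hypothesis $C_{\bG^*}(s)\le\bL^*$ being precisely the condition for $B$ and $C$ to be Bonnaf\'e--Rouquier correspondents --- the summand $M$ of $H^{*}_c(\bY_{\bU},\CO)$ cut out by $b$ and $c$ is an $(\CO Gb,\CO Lc)$-bimodule such that $M\otimes_{\CO Lc}-$ induces a Morita equivalence between $\CO Lc$ and $\CO Gb$.

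Next I would check the hypotheses of Lemma~\ref{moritaquotient}, taking $H=L$, the block idempotents $b,c$, the bimodule $M$, and the central $\ell$-subgroup $Z$. Being an $\ell$-group, $Z$ is embedded as a central subgroup of both $G$ and $L$ (indeed $Z\le Z(G)\cap L\le Z(L)$), and, $Z$ being a central $\ell$-subgroup, the reductions $\CO G\to\CO(G/Z)$ and $\CO L\to\CO(L/Z)$ send $b$ and $c$ to the block idempotents of $\bar B$ and $\bar C$ respectively. The essential point is the commutation $zm=mz$ for all $z\in Z$ and $m\in M$, where $z$ acts on the left through $G$ and on the right through $L$: here one uses that $Z$, although a priori only central in $G=\bG^F$, in fact lies in $Z(\bG)$ in the situations where the proposition is applied (where $Z$ arises as a subgroup of the centre of a simply connected group), so that each $z\in Z$ commutes with every element of $\bG$. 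Granting this, for $g\bU\in\bY_{\bU}$ the left translate $zg\bU$ equals the right translate $gz\bU$, so $z$ induces the same automorphism of $\bY_{\bU}$ on each side, hence the same operator on $H^{*}_c(\bY_{\bU},\CO)$ and on its summand $M$; that is, $zm=mz$.

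Lemma~\ref{moritaquotient} then yields that $(\CO\otimes_{\CO Z}M)\otimes_{\CO\bar L\bar c}-$ induces a Morita equivalence between $\CO\bar L\bar c$ and $\CO\bar G\bar b$, i.e.\ between $\bar C$ and $\bar B$, which is the claim. The step I expect to demand the most care is the first one: extracting from \cite{BonRou} that the Morita bimodule is (a block summand of) $H^{*}_c(\bY_{\bU},\CO)$ with its two-sided $G\times L$-action, together with the verification that $Z\le Z(\bG)$, so that the elementary identity $zg\bU=gz\bU$ is available; once these are secured, everything else is formal.
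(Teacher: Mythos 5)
Your proposal is correct and follows essentially the same route as the paper: realise the Bonnaf\'e--Rouquier bimodule as a block summand of the $\ell$-adic cohomology of the variety $\{g\bU : g^{-1}F(g)\in\bU\cdot F(\bU)\}$, observe that the left and right actions of $Z$ coincide on the variety because $Z\le Z(\bG)$ (the paper likewise invokes $Z(G)\le Z(\bG)$ and functoriality of cohomology), and then apply Lemma~\ref{moritaquotient}. No substantive difference from the paper's argument.
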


\begin{proof}
Let $\bf V$ be the unipotent radical of some parabolic subgroup of $\bG$ containing $\bL$ as Levi complement.
By \cite[Theorem B', \S 11]{BonRou}, $C $ and $B$ are isomorphic via a $(B, C) $-bimodule $M$ which is isomorphic to a direct summand of the $\ell$-adic cohomology module
$H_c^r({\bf X}, {\mathbb Z}_\ell) \otimes_{{\mathbb Z}_\ell } \CO $, where $ {\bf X}$ is the $(G, L)$-variety consisting of cosets $ g {\bf V} $ in $\bG$
such that $g^{-1}F(g) \in {\bf V}\cdot \,^F{\bf V} $ and where the action of $G$ is by left multiplication and that of $L$ is by right multiplication.
By Lemma \ref{moritaquotient} and the functoriality of $\ell$-adic cohomology with respect to finite morphisms, it suffices to note that $z g {\bf V} =g{\bf V }z $ for all $ g \in \bG$ and all $ z \in Z$ (here we use the fact that
$Z(G) \leq Z(\bG)$).
\end{proof}

The next result gives a sufficient condition for an $\ell$-block of $G$ to be nilpotent-covered.

\begin{lemma}\label{lem:nilp-cov-char} Let $B$ be a block of $\CO G$ and let $ s \in G^*$
be a semi-simple label of $B$. If $C_{\bG^*}^{\circ} (s)$ is a torus, then there exists a finite group $\tilde G$ such that $ G \unlhd \tilde G$, $Z(G) \leq Z(\tilde G) $ and a nilpotent block $\tilde B$ of $\CO \tilde G$ covering $B$. If $C_{\bG^*}(s)$ is a torus, then $B$ is nilpotent.
\end{lemma}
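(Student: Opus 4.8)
The plan is to separate the two assertions and, for the first, to produce the overgroup $\tilde G$ as a central extension of $G$ by a torus's worth of fixed points, so that $B$ becomes covered by a block whose Lusztig series is governed by a torus. First I would recall that, since $C_{\bG^*}^\circ(s)$ is a torus, the block $C$ of $\bL^F$ in Bonnaf\'e--Rouquier correspondence with $B$ lives in a Levi subgroup $\bL$ with $\bL^*$ a Levi of $\bG^*$ containing $C_{\bG^*}(s)$; choosing $\bL^*$ minimal we get $[\bL^*,\bL^*]$ small enough that $C_{\bL^*}^\circ(s) = C_{\bG^*}^\circ(s)$ is a maximal torus of $\bL^*$, forcing $[\bL,\bL]$ to be trivial, i.e.\ $\bL$ is itself a torus $\bT_0$. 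Then $C$ is a block of the abelian group $L = \bT_0^F$, hence has central (indeed full) defect and is nilpotent. The point is that a block of an abelian group is trivially nilpotent, and nilpotency is a Morita-invariant notion that is detected by the defect multiplicity module; since $B$ is Morita equivalent to $C$ by \cite[Theorem B', \S 11]{BonRou}, $B$ is nilpotent — which is exactly the second assertion.

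For the first assertion ($C_{\bG^*}^\circ(s)$ a torus, but $C_{\bG^*}(s)$ possibly disconnected), the component group $C_{\bG^*}(s)/C_{\bG^*}^\circ(s)$ is an abelian $\ell'$-group (as $s$ is an $\ell'$-element and $\bG$, hence $\bG^*$, has connected centre after a $z$-embedding — or one uses that the relevant quotient is always an $\ell'$-group here), and the block $C$ of $\bL^F$ now need not be nilpotent, but it has a maximal torus as $C_{\bL^*}^\circ(s)$ and so its non-nilpotency is entirely "toral and abelian". I would invoke the structural results on blocks with such labels: $C$ is covered by a nilpotent block of a group obtained by regularising the component group, concretely by passing to a Levi in a group $\tilde{\bG}$ with connected centre mapping onto $\bG$. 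More cleanly: after replacing $\bG$ by a $z$-extension with connected centre, $C_{\bG^*}(s)$ is connected and hence a torus, so $C$ is nilpotent there; pulling back along the central $\ell'$-isogeny and using Proposition~\ref{bonnrou-cent} to descend the Morita equivalence, together with Lemma~\ref{moritaquotient}, one produces $\tilde G \rhd G$ with $Z(G) \le Z(\tilde G)$ and a nilpotent block $\tilde B$ of $\CO\tilde G$ covering $B$. The overgroup $\tilde G$ is essentially $G$ "widened" by the extra toral part coming from the $z$-extension, and the covering block is the unipotent-times-linear block attached to $s$ in the bigger group.

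The main obstacle, and where the real work lies, is constructing $\tilde G$ and the covering block $\tilde B$ with the required centrality property $Z(G)\le Z(\tilde G)$ while simultaneously keeping track of the Bonnaf\'e--Rouquier Morita equivalence under the passage to the $z$-extension and back down to $G$. One has to check that the central $\ell'$-isogeny $\tilde{\bG}^F \to \bG^F$ (or the relevant subquotient thereof) interacts correctly with the Levi subgroup $\bL$, that the block $C$ is the restriction of a genuinely nilpotent block in the extension, and that the extension can be chosen so that $G$ sits normally inside $\tilde G$ with central intersection — this is a bookkeeping argument about duality, $z$-extensions, and which primes divide the relevant component groups, rather than a single clean lemma. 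I expect the formal argument to cite Bonnaf\'e--Rouquier, the results on nilpotent blocks of groups of Lie type with toral connected centraliser, and Proposition~\ref{bonnrou-cent} and Lemma~\ref{moritaquotient} above to move the equivalence across the central quotients; the second assertion will then drop out immediately since ``torus'' forces $\bL = \bT_0$ and blocks of abelian groups are nilpotent.
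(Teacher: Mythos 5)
Your treatment of the second assertion is essentially the paper's argument: when $C_{\bG^*}(s)$ is a torus it is a maximal torus, hence itself a Levi subgroup of $\bG^*$, so Bonnaf\'e--Rouquier makes $B$ Morita equivalent to a block of the abelian group $\bL^F$, and nilpotency is preserved under Morita equivalence. (Your intermediate claim that $C_{\bL^*}^{\circ}(s)$ being a maximal torus of $\bL^*$ ``forces'' $[\bL,\bL]=1$ is not right as stated -- a regular semisimple element of $SL_2$ is a counterexample -- but minimality of $\bL^*$ saves you, since the minimal Levi containing a maximal torus is that torus.)

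For the first assertion there is a genuine gap, precisely at the point you defer as ``bookkeeping''. You propose to pass to a $z$-extension and then descend via a ``central $\ell'$-isogeny'' using Proposition~\ref{bonnrou-cent} and Lemma~\ref{moritaquotient}. This does not work as described: a $z$-extension is a central \emph{surjection} $\tilde\bG \twoheadrightarrow \bG$ with torus kernel, so $\tilde\bG^F$ is not an overgroup of $G$ and ``covering'' in the sense of the statement does not apply; moreover Proposition~\ref{bonnrou-cent} and Lemma~\ref{moritaquotient} treat quotients by central $\ell$-subgroups, not isogenies with toral kernel, and no descent of Morita equivalences is needed anyway. The correct (and much shorter) construction is a regular \emph{embedding} $\iota\colon \bG \hookrightarrow \tilde\bG$ with $\tilde\bG$ of connected centre and $[\tilde\bG,\tilde\bG]=[\bG,\bG]$: then $\tilde G=\tilde\bG^F$ contains $G$ normally with $Z(G)=Z(\bG)^F\le Z(\tilde\bG)^F$, and the substantive input is the compatibility of Lusztig series under $\iota^*$ (\cite[Prop.~11.7]{Bon2006}), which lets one choose a semi-simple label $\tilde s$ of \emph{any} block $\tilde B$ of $\tilde G$ covering $B$ with $\iota^*(\tilde s)=s$ and identifies $C_{\tilde\bG^*}(\tilde s)=C_{\tilde\bG^*}^{\circ}(\tilde s)$ with the preimage of $C_{\bG^*}^{\circ}(s)$, hence a maximal torus. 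Applying Bonnaf\'e--Rouquier in $\tilde G$ then shows $\tilde B$ is Morita equivalent to a block of an abelian group, so $\tilde B$ is nilpotent and covers $B$. Your proposal gestures at this (``the overgroup is $G$ widened by the extra toral part'') but neither constructs $\tilde G$ with $G\unlhd\tilde G$ and $Z(G)\le Z(\tilde G)$, nor supplies the label-compatibility step, which is the actual content of the proof.
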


\begin{proof} Let $\iota: \bG \to \tilde\bG $, where $\tilde \bG$ is a connected reductive group with connected centre such that $[\tilde\bG, \tilde \bG] =[\bG, \bG] $, with corresponding Steinberg homomorphism again denoted $F$
 and let $\iota^*: \tilde \bG^* \to \bG^*$ be the corresponding dual map (see Section 2, \cite{Bon2006}). Set $\tilde G= \tilde \bG^F$. Then $Z(G) = Z(\bG)^F \leq Z(\tilde \bG) ^F$. Let $\tilde B $ be a block of
$\CO \tilde G$ covering $B$ and let $\tilde s\in \tilde \bG^{*F}$ be a semi-simple label of $\tilde B$. We may assume that
 $s= \iota^*(\tilde s)$ and hence that $C_{\tilde \bG^*}(\tilde s)= C_{\tilde \bG^*}^{\circ}(\tilde s) $ is the inverse image of $C_{\bG^*}^{\circ} (s)$ in $\tilde \bG^*$(see \cite[Prop.~11.7]{Bon2006}). So, $C_{\tilde \bG^*} (\tilde s)$ is a maximal torus of $\tilde \bG^*$.
By the Bonnaf\'e-Rouquier theorem, it follows that $\tilde B $ is Morita equivalent to
a block of $C_{\tilde \bG^*} (\tilde s)^F$. Since $C_{\tilde \bG^*} (\tilde s)^F$ is an abelian group and Morita equivalence preserves nilpotency of blocks (see \cite[8.2]{pu99}),
 $\tilde B$ is nilpotent. The second assertion follows by the same argument replacing $\tilde \bG$ by $\bG$.
\end{proof}


\section{On abelian $2$-blocks of finite reductive groups in odd characteristic}
\label{reductive groups}

We keep the notation and assumptions of the previous section.
Our aim in this section is to determine the structure of all $2$-blocks of $G$ when $p$ is odd and $\bG$ is simply connected
whose defect groups become abelian on passing to the quotient by a central $2$-subgroup of $G$. This will be done in Proposition \ref{2abelian-simp}.

We first consider quasi-isolated blocks. Recall that the block $B$ is called quasi-isolated if $C_{\bG^*}(s)$ is not contained in any proper Levi subgroup of $\bG^*$. In this case we call $s$ a quasi-isolated element.

\begin{lemma}\label{quasi-A-nilp} Suppose that ${\bf G}= PGL_{n+1}({\mathbb F})$ is the adjoint group of type $A_{n}$ and let
 $ s$ be a semi-simple quasi-isolated element of $\bf G$. Then, $o(s) $ is a divisor of $n+1$, $C_{\bf G}^{\circ} (s) $ is a Levi subgroup of ${\bf G}$ and $C_{\bf G}^{\circ} (s) /Z( C_{\bf G}^{\circ} (s)) $ is a direct product of $o(s)$ adjoint groups of type $A_{\frac{n+1} {o(s)} -1}$, that is, a direct
 product of $o(s)$ copies of $ PGL_{\frac{n+1}{o(s)} } ({\mathbb F})$.
\end{lemma}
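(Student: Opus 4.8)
The plan is to work with an explicit matrix model for $\bG = PGL_{n+1}(\mathbb{F})$ and its dual, and to use the standard description of centralisers of semisimple elements in type $A$. Since $\bG$ is adjoint of type $A_n$, its dual $\bG^*$ is simply connected of type $A_n$, i.e.\ $\bG^* = SL_{n+1}(\mathbb{F})$; alternatively one may argue directly in $GL_{n+1}$ and descend. So first I would fix a semisimple $s \in \bG$ and lift it to a semisimple element $\hat s \in GL_{n+1}(\mathbb{F})$, well-defined up to scalars. Diagonalising $\hat s$, its centraliser in $GL_{n+1}(\mathbb{F})$ is a product $\prod_i GL_{m_i}(\mathbb{F})$ where the $m_i$ are the multiplicities of the distinct eigenvalues. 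The key structural point is: $C_{\bG}(s)$ is connected (true for $\bG$ adjoint? no — for $\bG^*$ simply connected; here $\bG$ adjoint, so one must be careful), so I would instead record that $C_{\bG^*}(s)$ for $\bG^*$ simply connected may be disconnected, but for our statement we want $C_{\bG}^{\circ}(s)$ with $\bG$ adjoint, which is the image of $\prod_i GL_{m_i}(\mathbb{F})$ in $PGL_{n+1}$, hence always a Levi subgroup of $\bG$ (centralisers of semisimple elements in $GL$ are Levi, and this survives passing to the adjoint quotient). That disposes of the claim that $C_{\bG}^{\circ}(s)$ is a Levi subgroup, independently of quasi-isolatedness.

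Next I would bring in the quasi-isolated hypothesis. A Levi subgroup of $\bG = PGL_{n+1}$ is proper unless it is all of $\bG$; translating via the eigenvalue-multiplicity description, $C_{\bG}^{\circ}(s)$ (the image of $\prod GL_{m_i}$) is contained in a proper Levi of $\bG$ precisely when the tuple $(m_1,\dots,m_k)$ of multiplicities is not constant — more precisely, $C_{\bG^*}(s)$ fails to be quasi-isolated exactly when one can conjugate $s$ into a proper block-diagonal Levi, which happens unless all eigenspaces have the same dimension. So quasi-isolation forces all $m_i$ equal, say $m_i = m$ for all $i$, with $km = n+1$, and the number $k$ of distinct eigenvalues is the crucial parameter. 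I would then identify $k$ with $o(s)$: scaling $\hat s$ by a scalar $\mu$ permutes nothing but cycles eigenvalues multiplicatively, and the condition that $\hat s$ descends to an element of order $o(s)$ in $PGL_{n+1}$ means $\hat s^{o(s)}$ is scalar but no smaller power is; writing the eigenvalues as $\zeta \cdot (\text{a fixed set})$ one sees the distinct eigenvalues are exactly an $o(s)$-element coset structure, giving $k = o(s)$ and hence $o(s) \mid n+1$. With $m = (n+1)/o(s)$, the centraliser is the image in $PGL_{n+1}$ of $\prod_{i=1}^{o(s)} GL_m(\mathbb{F})$; modding out its centre $Z(C_{\bG}^{\circ}(s))$ kills the torus of scalars in each factor and the diagonal $PGL$-scalar, leaving $\prod_{i=1}^{o(s)} PGL_m(\mathbb{F})$, which is the asserted product of $o(s)$ adjoint groups of type $A_{m-1} = A_{(n+1)/o(s)-1}$.

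The main obstacle I anticipate is the bookkeeping around which group one diagonalises in and how centres behave under the isogeny $SL_{n+1} \to PGL_{n+1}$ (equivalently $GL_{n+1} \to PGL_{n+1}$): one must check that "$C_{\bG}^{\circ}(s)$ is a Levi" genuinely holds in the adjoint group and that the passage to $C/Z(C)$ really produces the adjoint-type factors without extra central pieces, and one must pin down precisely the translation between "quasi-isolated" (a condition really living in $\bG^*$ on $C_{\bG^*}(s)$) and the multiplicity pattern of the eigenvalues of a lift of $s$. I would handle this by citing the standard classification of isolated/quasi-isolated semisimple classes in type $A$ (e.g.\ Bonnaf\'e's work, \cite{Bon2006}), which already records that in type $A$ the quasi-isolated classes are exactly those whose centraliser has the above block form with equal block sizes, and then only the order computation $k = o(s)$ and the centre computation remain, both of which are short linear algebra.
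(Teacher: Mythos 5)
Your overall strategy (lift $s$ to $\hat s\in GL_{n+1}(\mathbb F)$, identify $C_{\bG}^{\circ}(s)$ with the image of $\prod_i GL_{m_i}(\mathbb F)$, then mod out centres) is sound, and your fallback is in fact exactly what the paper does: its entire proof is a reference to Table 2 of \cite{Bon2004}. But as a self-contained argument your proposal has a genuine gap at the bridge between ``quasi-isolated'' and the eigenvalue data. Having all multiplicities $m_i$ equal is necessary but not sufficient for $s$ to be quasi-isolated, and the identification $k=o(s)$ does not follow from equal multiplicities. Concretely, take $n+1=4$ and $\hat s=\mathrm{diag}(1,1,\zeta,\zeta)$ with $\zeta^2\neq 1$: the multiplicities are equal, but $C_{\bG}(s)=C_{\bG}^{\circ}(s)$ is the image of $GL_2\times GL_2$, a proper Levi, so $s$ is not quasi-isolated; moreover $k=2$ while $o(s)$ is the order of $\zeta$, which can be arbitrary. (Your phrasing ``fails to be quasi-isolated exactly when one can conjugate $s$ into a proper block-diagonal Levi'' is also off: quasi-isolation is a condition on the centraliser, not on $s$, which always lies in proper Levi subgroups.)

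What is actually needed is: $s$ is quasi-isolated in $PGL_{n+1}(\mathbb F)$ if and only if the eigenvalues of a lift $\hat s$ form a single coset $a\mu_d$ of a finite group of roots of unity, with all multiplicities equal; equivalently, the finite group $A=\{\lambda\in\mathbb F^{\times}\,:\,\lambda\hat s\ \text{is conjugate to}\ \hat s\}$, which surjects onto $C_{\bG}(s)/C_{\bG}^{\circ}(s)$, permutes the distinct eigenvalues transitively. This transitivity is what makes the connected centre of the \emph{full} centraliser $C_{\bG}(s)$ trivial, i.e.\ prevents $C_{\bG}(s)$ from sitting inside the proper Levi $C_{\bG}^{\circ}(s)$, and it is also precisely what gives $d=o(s)$, hence $o(s)\mid n+1$ and the count of $o(s)$ blocks; so the disconnected part of the centraliser carries the whole content of the lemma rather than being bookkeeping. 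Once this equivalence is either proved (a short argument with the twisting scalars $\lambda$) or quoted from Table 2 of \cite{Bon2004} --- which records, for each divisor $d$ of $n+1$, that $C^{\circ}$ has type $(A_{(n+1)/d-1})^{d}$ with component group $\mathbb Z/d$ --- your remaining steps (that $C_{\bG}^{\circ}(s)$ is a Levi, that $o(s)=d$, and that $C^{\circ}/Z(C^{\circ})\cong\prod_{i=1}^{o(s)}PGL_{(n+1)/o(s)}(\mathbb F)$) are correct; note also that the relevant reference is \cite{Bon2004} rather than \cite{Bon2006}.
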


\begin{proof} See table 2 of \cite{Bon2004}.
\end{proof}

\begin{lemma} \label{quasi-iso-ab} Assume that $p$ is odd and that $\bG$ is simple and simply-connected.
 Let $B$ be a quasi-isolated $2$-block of $G$ with semi-simple label $s \in \bG^{*F}$.
\begin{enumerate}[\rm(a)]
\item Suppose that $B$ has abelian defect groups. Then one of the following holds.
\begin{enumerate}[(i)]
\item $\bG $ is of type $A_n $,
 $n$ is even, and $C_{\bG^*}^{\circ} (s) $ is a torus.
\item $\bG $ is of type $G_2 $, $F_4 $, $E_6$ or $E_8$, $s=1 $ and $B$ is of defect $0$.
\end{enumerate}
\item Suppose that $B$ has non-abelian defect groups, but for some non-trivial central $2$-subgroup $Z$ of $\bG^ F$, the image $\bar B$ of $B$ in $G/Z$
has abelian defect groups. Then $Z $ is cyclic of order $2$ and one of the following holds.
\begin{enumerate}[(i)]
\item $\bG$ is of type $A_n$, $ n\equiv 1\pmod{4}$ and the defect groups of $\bar B $ are $C_2 \times C_2 $.
\item $\bG$ is of type $E_7$, and the defect groups of $\bar B $ are $C_2 \times C_2 $.
\end{enumerate}
\end{enumerate}
\end{lemma}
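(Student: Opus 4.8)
The plan is to analyze quasi-isolated semisimple elements $s$ of $\bG^*$ of $2$-power order (for part (a)) or of slightly more general order (for part (b)) type-by-type, using the classification of quasi-isolated classes due to Bonnaf\'e (tables in \cite{Bon2004}, \cite{Bon2006}) together with the structural results of the previous section. The key observation is that the defect groups of a block with semisimple label $s$ are (up to conjugacy) a Sylow $\ell$-subgroup of $C_{\bG^*}(s)^F$, at least up to the relation between $\bG$ and a group with connected centre; more precisely, after passing via $\iota: \bG \to \tilde\bG$ as in Lemma \ref{lem:nilp-cov-char}, the block $\tilde B$ is Bonnaf\'e--Rouquier correspondent to a block of $C_{\tilde\bG^*}(\tilde s)^F$ (which is a connected reductive group since $\tilde\bG$ has connected centre), so it suffices to understand when a $2$-block of $C_{\tilde\bG^*}(\tilde s)^F$ has abelian (respectively, becomes abelian mod a central $2$-subgroup) defect groups, and then translate back. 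This reduces the question to: when does $C := C_{\bG^*}^\circ(s)$ (a connected reductive group, quasi-isolated, so typically not a proper Levi) have the property that a relevant $2$-block of $C^F$ is abelian? Since $C$ is reductive, its blocks have defect groups built from a Sylow $2$-subgroup of $Z^\circ(C)^F$ times defect groups of unipotent blocks of $[C,C]^F$, and abelianness forces $[C,C]$ to be a product of $SL_2$'s (or $[C,C]$ trivial), at which point Lemma \ref{2abelian-levi} and Lemma \ref{dynkin-class} apply directly if $C$ happens to be a Levi, or Bonnaf\'e's tables pin down $C$ otherwise.

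Concretely, for part (a): if $s$ has $2$-power order and $B$ has abelian defect groups, then first dispose of the case $s = 1$ (principal block; $B$ abelian forces $G$ to have abelian Sylow $2$-subgroups, which by Walter's theorem and the simply-connected hypothesis happens only in the exceptional types $G_2, F_4, E_6, E_8$ with $q$ appropriately restricted — but in fact one must check that those blocks have defect $0$, i.e. the Sylow $2$-subgroup is forced to be trivial, which happens precisely when $q^2 \equiv \pm 3 \pmod 8$ rules out nontrivial abelian options; this is case (ii)). If $s \neq 1$, run through the types: for type $A_n$, invoke Lemma \ref{quasi-A-nilp}, which says $C_{\bG^*}^\circ(s)$ is a Levi whose quotient by its centre is a product of $o(s)$ copies of $PGL_{(n+1)/o(s)}$; for this to have abelian Sylow $2$-subgroups the $PGL$-factors must be tori, forcing $(n+1)/o(s) = 1$, i.e. $C^\circ_{\bG^*}(s)$ is a torus, and then $n+1 = o(s)$ is a power of $2$, so $n$ is even — this is case (i). For the remaining quasi-isolated types $B, C, D, E_6, E_7, E_8, F_4, G_2$, consult Bonnaf\'e's tables for the $2$-power-order quasi-isolated classes: in each case $[C^\circ_{\bG^*}(s), C^\circ_{\bG^*}(s)]$ is a product of components that are either tori or too large (types other than $A_1$) to give abelian Sylow $2$-subgroups, except when the whole thing collapses to defect $0$ in the listed exceptional types.

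For part (b): $B$ is non-abelian but $\bar B$ abelian for some central $2$-subgroup $Z$. By Lemma \ref{abelian-cyclic-quotient}(ii) applied through the reduction above — essentially, Lemma \ref{2abelian-levi} when $C^\circ_{\bG^*}(s)$ is a Levi — the defect group of $B$ must be quaternion of order $8$ times an abelian $2'$-ish part, $Z$ is the central involution, $Z$ has order $2$, and $\bar B$ has Klein four defect groups. Then repeat the type analysis: in type $A_n$, Lemma \ref{quasi-A-nilp} plus Lemma \ref{dynkin-class}(i) forces the $SL_2$-situation, which via $q^t \equiv \pm 3 \pmod 8$ and the counting $n+1 = 2t$ with $t$ odd gives $n \equiv 1 \pmod 4$ — this is case (i). In type $E_7$, Lemma \ref{dynkin-class}(v) with $J = \{\alpha_1, \alpha_2, \alpha_4\}$ gives the relevant Levi, $t = 3$, and the defect group of $\bar B$ is again Klein four — this is case (ii). One must then check that the remaining quasi-isolated types ($B_n, C_n, D_n, E_6, E_8, F_4, G_2$) do not occur: for $B_n$ and $C_n$ ($n$ even) the connected-centre obstruction of Lemma \ref{dynkin-con}(i) kills them (as in the proof of Proposition \ref{2abelian-levi}), for $C_n$ ($n$ odd) the non-transitivity of Lemma \ref{dynkin-con}(ii) together with $Z$ being cyclic of order $2$ forces a contradiction, for $D_n$ the parity constraint $|J_1| = t$ odd plus the need for the right central involution works out but one must verify it does not actually produce a genuinely new quasi-isolated (non-Levi) case beyond what is subsumed — and for $E_6, E_8, F_4, G_2$ the centre is too small ($E_6$ has centre of order $1$ or $3$, the others trivial) to contribute a central $2$-subgroup at all. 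The main obstacle I expect is the bookkeeping for the non-Levi quasi-isolated classes in types $D$ and $E$: Bonnaf\'e's tables must be read carefully to confirm that $[C^\circ_{\bG^*}(s), C^\circ_{\bG^*}(s)]$ never secretly produces an $SL_2$-power with a nontrivially embedded central involution outside the Levi cases already handled, and to confirm the precise form of $C_{\bG^*}(s)/C^\circ_{\bG^*}(s)$ so that the component group does not destroy abelianness of the defect group after passing from $\bG$ to $\tilde\bG$ and back.
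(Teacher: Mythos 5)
There are genuine gaps, and the most serious one is at the very start: you take the semi-simple label $s$ to have $2$-power order, but for $\ell=2$ the label of a $2$-block is a $2'$-element, i.e.\ $s$ has \emph{odd} order (this is the Brou\'e--Michel/Hiss set-up recalled in Section 4 of the paper). This is not a cosmetic slip. With odd order, the paper disposes of types $B_n$, $C_n$, $D_n$ instantly: the identity is the only quasi-isolated element of odd order there, so $B$ is unipotent, hence the principal block by Cabanes--Enguehard, hence has the full non-abelian Sylow $2$-subgroup as defect group even modulo any central subgroup. With your $2$-power-order assumption you would instead be scanning Bonnaf\'e's tables over a completely different (and much larger) set of classes, e.g.\ quasi-isolated involutions in type $D_n$ with centralizers of type $D_k\times D_{n-k}$, which your sketch does not handle. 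The same confusion breaks your type $A_n$ argument internally: from $C^{\circ}_{\bG^*}(s)$ a torus and $n+1=o(s)$ you conclude ``$n+1$ is a power of $2$, so $n$ is even'', which is false arithmetic; the correct conclusion ($n$ even) only comes out because $o(s)$ is odd.

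The second gap is your treatment of the exceptional types. Your global principle that the defect groups are Sylow $2$-subgroups of $C_{\bG^*}(s)^F$ is not valid for $\ell=2$ in this generality, and the Levi-based machinery (Lemma \ref{2abelian-levi}, Lemma \ref{dynkin-class}) cannot be invoked for a quasi-isolated label, since by definition $C_{\bG^*}(s)$ is contained in no proper Levi subgroup; in particular your route to case (b)(ii) for $E_7$ via $J=\{\alpha_1,\alpha_2,\alpha_4\}$ is the argument for the non-quasi-isolated situation of Proposition \ref{2abelian-simp}, not for this lemma. The paper instead quotes the explicit classifications: for $s=1$ it uses Enguehard's lists of unipotent $2$-blocks (the abelian-defect cases in $G_2,F_4,E_6,E_8$ are the \emph{defect zero} unipotent blocks, and in $E_7$ two lines of the table give dihedral defect groups of order $8$, becoming Klein four modulo the centre), and for $s\ne 1$ quasi-isolated in $E_7$ it uses Kessar--Malle to rule out abelian defect even after central quotients. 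Your claim that abelian defect for $s=1$ forces abelian Sylow $2$-subgroups, occurring in $G_2,F_4,E_6,E_8$ for suitable $q$, is false --- these groups never have abelian Sylow $2$-subgroups in odd characteristic; the point is that the relevant blocks are non-principal unipotent blocks of defect zero. Only your type $A_n$ analysis for part (b) (components of type $A_1$, $n+1=2t$ with $t$ odd, $q^t\equiv\pm3\pmod 8$, Klein four quotient) is essentially the paper's argument, via Lemma \ref{abelian-cyclic-quotient}, Lemma \ref{quasi-A-nilp} and Proposition \ref{2abelian-levi}, and even there it rests on the odd-order correction above.
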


\begin{proof} Suppose first that $\bG$ is of type $B_n$, ($n\geq 2 $), $C_n$, ($ n\geq 3 $) or $D_n$, $(n \geq 4)$. The identity is the only quasi-isolated element of odd order in $\bG^*$, (see for example table 2 of \cite{Bon2004}), hence $s=1 $ and $B$ is unipotent. Further, the only unipotent $2$-block in classical groups is the principal block \cite{CaEn93} (for the case $G=\,^3D_4(q)$, see the relevant section of \cite{En00}). In particular, the defect groups of $B$ are the Sylow $2$-subgroups of $G$. Since the Sylow $2$-subgroups of $G/Z$ are not abelian for any central subgroup of $G$, neither $B$ nor $\bar B$ has abelian defect groups.

Suppose that $\bG$ is of type $G_2 $, $F_4 $, $E_6$ or $E_8$. Then $G$ has either a trivial centre or a centre of order $3$, so the hypothesis of (b) does not hold. By the lists in \cite{En00} and \cite{kessarmalle}, one sees that the only possibility for (a) to hold is $s=1 $ and $B$ of defect zero.

 Suppose that $\bG$ is of type $E_7$ and $s=1 $. By \cite{En00}, the defect groups of $B$ are non-abelian. If $Z $ is cyclic of order $2$, then $\bar B $ has abelian defect groups only if $\bar
B$ corresponds to lines 3 or 7 of the table on page 354 of \cite{En00} in which case the defect groups of $B$ are dihedral of order $8$ and those of $\bar B$ are isomorphic to $C_2 \times C_2 $. If $\bG $ is of type $E_7 $ and $s \ne 1 $, then by \cite{kessarmalle}, neither $B$ nor $\bar B$ has abelian defect groups.

It remains to consider the case that $\bG$ is of type $A_n$. Then $C_{\bG^*}^{\circ}(s) $ is a Levi subgroup of $\bG^*$. 
Let $\bH$ be an $F$-stable Levi subgroup of $\bG$ in duality with $C_{\bG^*}^{\circ}(s) $. By
\cite[Proposition 1.5]{En08}, the Sylow $2$-subgroups of $\bH^F$ are defect groups of $B$.
Suppose first that the Sylow $2$-subgroups of $\bH^F$ are abelian. Then
by Lemma~\ref{abelian-cyclic-quotient}(i) applied to $\bH$, $\bH$ and hence $C_{\bG^*}^{\circ}(s) $ are tori. Since $s$ has odd order,
Lemma \ref{quasi-A-nilp} implies that $n=o(s)-1$ is even.
Now suppose that the Sylow $2$-subgroups of $\bH^F$ are non-abelian, and let $Z $ be a central $2$-subgroup of $G$ such that $\bH^F /Z$ has abelian Sylow $2$-subgroups
(note that since $\bH$ is a Levi subgroup of $\bG$, $Z \leq \bH$).
By Lemma \ref{abelian-cyclic-quotient}(ii) all components of $\bH$ and hence of $C_{\bG^*}^{\circ}(s)$ are of type $A_1 $. Thus, by Lemma \ref{quasi-A-nilp}, $n \equiv 1\pmod{4}$. Since $\bar B$ has abelian defect groups and $B$ has non-abelian defect groups,
 by Proposition \ref{2abelian-levi} and its proof, the defect groups of $\bar B$ have order $4$. On the other hand, since $\bH^F/Z$ has a subgroup isomorphic to $C_2 \times C_2 $, the defect groups of $\bar B$ are isomorphic to $C_2 \times C_2 $ as required.
\end{proof}

The notation $A_n(q)$, $D_n(q)$ etc.\ in the following proposition stands for the simply connected version of the groups in question.

\begin{proposition}\label{2abelian-simp} Suppose that ${\mathbb F} $ has odd characteristic, $\bG$ is simple and simply connected. Let $B$ be a $2$-block of $\bG$ with semi-simple label $s$ and defect group $P$. Suppose that $Z \leq O_2(Z(G))$ is such that $P/Z$ is abelian and that $C_{\bG^*}^{\circ}(s) $ is not a torus. Then, one of the following holds.
\begin{enumerate}[(i)]
\item $\bG $ is of type $G_2$, $F_4 $, $E_6 $ or $E_8 $, $B$ is unipotent and $P=1 $.
\item $\bG$ is of type $E_7$, $B$ is quasi-isolated, $Z\ne 1 $ and $P/Z $ is a Klein $4$-group.
\item $\bG$ is of type $A_n $, $G \cong A_n(q) $ or $\,^2A_n(q) $, $n+1= 2t$, $t$ odd, $q \equiv \pm 3\pmod{8}$, $P$ is quaternion of order $8$ and $P/Z $ is a Klein $4$-group.
\item $\bG$ is of type $E_7 $ or $E_8 $ and there exists an $F$-stable Levi subgroup $\bL$ of $\bG $, and a $2$-block $C$ of $\bL^F $ such that $B$ and $C$ are Bonnaf\'e-Rouquier correspondents, $[\bL, \bL]^F \cong E_6(q)$ or $\, ^2E_6(q) $ and $Z^{\circ}(\bL)^F$ contains a defect group of $C$.
\item $\bG $ is of type $D_n $, $G \cong D_n(q)$, $ n =2t $, $ t$ odd, and $q \equiv \pm 3\pmod{8}$ or $\bG $ is of type $E_7 $. In these cases, there exists an $F$-stable Levi subgroup $\bL$ of $\bG $, and a $2$-block $C$ of $\bL^F $ such that $B$ and $C$ are Bonnaf\'e-Rouquier correspondents and
the following holds:
$ \bL^F = Z^{\circ} (\bL)^F \times [\bL, \bL]^F $ and $ [\bL, \bL]^F \cong SL_2 (q^t) $.
 Let $C_0$ (respectively $C_1$) be the $2$-block of
$ Z^{\circ} (\bL)^F $ (respectively $ [\bL, \bL]^F $) covered by $C$ and let $P_i $, $i=0,1 $ be a defect group of $C_i $.
Then $C_1 $ is the principal block of $[\bL, \bL]^F $, $P_1 $ is quaternion of order $8$,
$ Z = (Z\cap P_0) \times (Z\cap P_1)$ and $Z\cap P_1 \ne 1 $. In particular, $P_1/ (Z\cap P_1) $ is a Klein $4$-group. Further, if
 $\bG $ is of type $D_n $, then $P_0$ is cyclic of order $2$ or $4$.
\end{enumerate}
\end{proposition}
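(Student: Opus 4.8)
The plan is to reduce everything to the quasi-isolated case handled in Lemma \ref{quasi-iso-ab}, combined with the structural analysis of non-toral Levi subgroups with abelian Sylow $2$-subgroups after quotienting by a central $2$-group carried out in Proposition \ref{2abelian-levi}. First I would dispose of the trivial case: if $B$ is of defect zero then, using the known classification of unipotent $2$-blocks of exceptional groups (cf.\ \cite{En00}) together with the fact that $C_{\bG^*}^{\circ}(s)$ is not a torus, one is forced into case (i). So from now on assume $P \ne 1$.

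\medskip
\noindent\textbf{Reduction via Bonnaf\'e--Rouquier.} Let $\bL$ be an $F$-stable Levi subgroup of $\bG$ minimal with the property that $C_{\bG^*}(s) \leq \bL^*$ for a dual Levi $\bL^*$, and let $C$ be the Bonnaf\'e--Rouquier correspondent of $B$ in $\CO\bL^F$. By \cite[Theorem B', \S 11]{BonRou} the blocks $B$ and $C$ are Morita equivalent, hence have isomorphic defect groups; moreover, since $Z \leq O_2(Z(G))$ and $Z(G) \leq Z(\bG) \leq \bL$, Proposition \ref{bonnrou-cent} shows the Morita equivalence descends to $G/Z$ and $\bL^F/Z$, so $P/Z$ abelian forces the defect groups of the image $\bar C$ of $C$ in $\bL^F/Z$ to be abelian as well. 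Now $C$ is a quasi-isolated block of $\bL^F$ (with respect to the containing reductive group $\bL$) with the same semi-simple label $s$, and $C_{\bL^*}^{\circ}(s) = C_{\bG^*}^{\circ}(s)$ is not a torus. Since $[\bL,\bL]$ is a direct product of simple simply connected groups, one then analyses $C$ on each component.

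\medskip
\noindent\textbf{Case analysis on $[\bL,\bL]$.} If $\bL = \bG$, then $B$ itself is quasi-isolated, and Lemma \ref{quasi-iso-ab} applied to $B$ (and the hypothesis that $C_{\bG^*}^{\circ}(s)$ is not a torus, which rules out the torus outcome in part (a)(i) and the defect-$0$ outcome unless $P=1$) leaves precisely: type $E_7$ with $Z \ne 1$ and $P/Z$ Klein four — this is (ii) — or type $A_n$ with $n \equiv 1 \pmod 4$, which together with Proposition \ref{2abelian-levi}(a) (applied to the non-toral Levi carrying the defect group, namely a dual of $C_{\bG^*}^{\circ}(s)$, which by Lemma \ref{quasi-A-nilp} has all components of type $A_1$) pins down $n+1 = 2t$ with $t$ odd, $q \equiv \pm 3 \pmod 8$, $P$ quaternion of order $8$, giving (iii). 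If $\bL \subsetneq \bG$ is proper non-toral, I would split according to the exceptional type of $\bG$: when $[\bL,\bL]^F \cong E_6(q)$ or $\,^2E_6(q)$ (possible only for $\bG$ of type $E_7$ or $E_8$), the block $C$ restricted to the $E_6$-part is unipotent of defect $0$ by the classification of \cite{En00}, and $Z^{\circ}(\bL)^F$ contains the whole defect group — this is (iv). Otherwise all components of $[\bL,\bL]$ have rank $1$, and I invoke Proposition \ref{2abelian-levi} directly: it says $F$ is transitive on the $t$ components, $[\bL,\bL]^F \cong SL_2(q^t)$ with $t$ odd and $q \equiv \pm 3 \pmod 8$, and $\bL^F = Z^{\circ}(\bL)^F \times [\bL,\bL]^F$; by Proposition \ref{2abelian-levi}(b)(c) the only options for $\bG$ are type $D_n$ with $n = 2t$, untwisted $D_n(q)$, or type $E_7$, and the quaternion-of-order-$8$ structure of $P_1$, the splitting $Z = (Z \cap P_0) \times (Z \cap P_1)$, and $Z \cap P_1 \ne 1$ all come straight out of that proposition. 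The cyclic-of-order-$2$-or-$4$ statement for $P_0$ in type $D_n$ is the order formula $|Z^{\circ}(\bL)^F| = q^t - 1$ from \cite[Prop.~10]{carter78}, whose $2$-part is $4$ or larger only when $q \equiv 1 \pmod 4$ — actually one needs $2$-rank $\le 1$, which holds since $Z^{\circ}(\bL)^F$ is cyclic. Finally, that $C_1$ is the principal block of $SL_2(q^t)$ follows because the only $2$-block of $SL_2(q^t)$ with full (non-cyclic, i.e.\ quaternion) defect is the principal one.

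\medskip
\noindent\textbf{Main obstacle.} The delicate part is ensuring the bookkeeping across the Bonnaf\'e--Rouquier reduction is airtight: one must check that the minimal Levi $\bL$ containing $C_{\bG^*}(s)$ really does contain $Z$, that $C$ is genuinely quasi-isolated in $\bL$ so that Lemma \ref{quasi-iso-ab} and Lemma \ref{quasi-A-nilp} apply to its components, and that the defect group of $C$ is exactly the Sylow $2$-subgroup of the relevant non-toral Levi inside $\bL^*$ (via \cite[Prop.~1.5]{En08} as in the proof of Lemma \ref{quasi-iso-ab}) — i.e.\ that Proposition \ref{2abelian-levi} is being applied to the correct Levi subgroup. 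Handling the exceptional groups $E_7, E_8$ with their $E_6$-type Levi subgroups, and matching the twisted/untwisted dichotomy against the constraint $q \equiv \pm 3 \pmod 8$ (which rules out $\,^2D_n(q)$ and $\,^3D_4(q)$ as noted inside the proof of Proposition \ref{2abelian-levi}), is where most of the care is needed; the $SL_2$-level statements are then routine.
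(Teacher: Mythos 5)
Your overall strategy (reduce via Bonnaf\'e--Rouquier to a quasi-isolated block $C$ of a Levi $\bL^F$, then use Lemma \ref{quasi-iso-ab} and Proposition \ref{2abelian-levi}) is the same skeleton as the paper's proof, but two steps you treat as bookkeeping are in fact the substance of the argument, and as written they are gaps. The pivotal one is your residual branch ``otherwise all components of $[\bL,\bL]$ have rank $1$, and I invoke Proposition \ref{2abelian-levi} directly''. Proposition \ref{2abelian-levi} applies to $\bL$ only if the Sylow $2$-subgroups of $\bL^F/Z$ are abelian, i.e.\ only once one knows that a defect group of $C$ is a \emph{full} Sylow $2$-subgroup of $\bL^F$. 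That is not automatic: a priori (via \cite[Prop.~1.5]{En00}) the defect group is only a Sylow $2$-subgroup of the smaller Levi $\bH$ dual to $C^{\circ}_{\bL^*}(s)$. The paper closes this by showing $C_{\bG^*}(s)=C^{\circ}_{\bG^*}(s)=\bL^*$ (the component group has exponent dividing the odd order of $s$ while $Z(\bG)$ is a $2$-group in types $D_n$ and $E_7$, and quasi-isolatedness of $s$ in $\bL^*$ then forces $C(s)=\bL^*$), so that $s$ is central in $\bL^*$ and $C$ is a linear twist $\hat s\otimes C'$ of a unipotent block, which on a product of type-$A$ components must be the principal block; only then is the defect group the full Sylow $2$-subgroup, and only then do the rank-one components, the quaternion $P_1$, and the principality of $C_1$ follow. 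Your alternative justification that ``$C_1$ is principal because it is the only block of $SL_2(q^t)$ with quaternion defect'' is circular, since knowing $P_1$ is quaternion of order $8$ presupposes exactly this identification. Flagging this as the ``main obstacle'' does not discharge it: it is the core of the proof of case (v).

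The second gap is structural: your analysis of the case $\bL\subsetneq\bG$ is written ``according to the exceptional type of $\bG$'', but classical $\bG$ with $\bL$ proper is precisely where case (v) for $D_n$ and the non-quasi-isolated part of case (iii) for $A_n$ occur ($s$ is quasi-isolated in $\bG^*$ in neither situation). The paper handles classical types by a more direct route that you omit: for $\bG$ classical, $C^{\circ}_{\bG^*}(s)$ is an $F$-stable Levi (odd order of $s$, $2$ the only bad prime), \cite[Prop.~1.5]{En00} identifies $P$ with a Sylow $2$-subgroup of the dual Levi, and Proposition \ref{2abelian-levi} eliminates $B_n$, $C_n$ and yields (iii) and, after the connectedness argument above, (v). In your residual branch you also cite only parts (b),(c) of Proposition \ref{2abelian-levi}, silently excluding outcome (a) (type $A_n$), which is exactly the outcome the non-quasi-isolated $A_n$ blocks would have to land in. Two smaller omissions: in the $E_6$-component case one still needs the short index-$3$ argument showing a defect group of $C$ may be taken inside $Z^{\circ}(\bL)^F$, and for $\bG$ of type $E_8$ the possibility $[\bL,\bL]=\bY\times\bX$ with $\bY$ of type $A_1$ must be ruled out (the paper does this using $Z=1$ and Lemma \ref{quasi-iso-ab}).
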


\begin{proof} Suppose first that $\bG$ is of classical type $A_n $, $B_n$, $C_n $, $D_n$. Since $s$ has odd order and $2$ is the only bad prime for classical groups, $C_{\bG^*}^{\circ} (s)$ is a Levi subgroup of $\bG^*$ which is also $F$-stable since $s \in \bG^{*F}$. Denote by $\bL$ an $F$-stable Levi subgroup of $\bG $ in duality with $C_{\bG^*}^{\circ}(s)$.
By \cite[Prop.~1.5]{En00} we may assume that $P$ is a Sylow $2$-subgroup of $\bL$. So, by Proposition \ref{2abelian-levi}, $\bG $ is not of type $B_n $ or $C_n $. If $\bG$ is of type $A_n$, then by Proposition \ref{2abelian-levi}, case (iii) of the proposition holds.

Suppose that $\bG $ is of type $D_n $. Then $Z(\bG) $ is a $2$-group, hence $C_{\bG^*}(s) /C_{\bG^ *}^{\circ} (s) $ is a $2$-group. On the other hand, the exponent
of $ C_{\bG^*}(s) /C_{\bG^ *}^{\circ} (s) $ divides the order of $s$ which is odd. Hence, $C_{\bG^*}(s) =C_{\bG^*}^{\circ} (s) =\bL^*$ and there is a $2$-block of $\bL^F $, say $C$, which is a Bonnaf\'e-Rouquier correspondent of $B$. Moreover, since $s$ is central in $\bL^*$, there exists a linear representation $\hat s$ of $\bL^F$ with the same order as $s$ and a unipotent block $C'$ of $\bL^F$ such that
the characters of $C$ are of the form $\hat s \otimes \chi $, where $\chi $ is a character of $C'$. Since the principal block
in a group of type $A_n$ is the only unipotent block, it follows from Proposition \ref{2abelian-levi}(b) that case (v) of the proposition holds.

Thus, $\bG$ is of the exceptional type. Let $\bL $ be an $F$-stable Levi subgroup of $\bG$ and $\bL^* $ a Levi subgroup of $\bG^*$ in duality with $\bL$ such that $s$ is a quasi-isolated element of $\bL^* $ and $C_{\bG^*}(s) =C_{\bL^*}(s)$.
Let $C $ be a $2$-block of $\bL^F $ in Bonnaf\'e-Rouquier correspondence with $B$.

Suppose that all components of $\bL $ are of type $A$. Then, by the same argument as in the beginning of the proof, $C_{\bG^*}^{\circ} (s)= C_{\bL^*}^{\circ} (s) $ is a Levi subgroup of $\bL^* $, hence of $\bG^* $ which is also $F$-stable.
Let $\bH \leq \bL \leq \bG $ be an $F$-stable Levi subgroup in duality with $ C_{\bL^*}^{\circ} (s) $. Again, by
\cite[Prop.~1.5]{En00}, a Sylow $2$-subgroup, say $P'$ of $\bH^F $ is a defect group of $C$. By \cite[Theorem~1.3]{kessarmalle}
$P'/Z \cong P/Z$ is abelian. By Proposition \ref{abelian-cyclic-quotient}, $Z\ne 1$, hence $\bG $ is of type $E_7 $. In this case,
 $Z(\bG) $ is of order $2$. So, by the same argument as for the case $D_n$ above, $C_{\bG^*} (s) = C_{\bL^*}^{\circ} (s) =\bL^*$. Now again by the same argument as given above for type $D_n $ and using Proposition \ref{2abelian-levi}(c), we get that case (v) of the proposition holds.

We assume from now on that $\bG$ is of exceptional type and $\bL $ has a component which is not of type $A$.
Let $\Delta $ be the set of components of $[\bL, \bL]$ and let $Orb_F (\Delta)$ be the set of $F$-orbits of $\Delta $.
Then,
\[[\bL, \bL]= \prod_{ \delta \in Orb_F(\Delta)} \prod_{\bX \in \delta} \bX \]
so that
\[ [\bL, \bL] ^ F = \prod_{ \delta \in Orb_F(\Delta)} (\prod_{\bX \in \delta} \bX)^F. \]
Let $E$ be a block of $[\bL, \bL]^F$
covered by $C$ and for each ${\delta} \in Orb_F(\Delta)$, let $E_{\delta}$
be the corresponding component block of $ (\prod_{\bX \in \delta} \bX)^F$ and $D_{\delta} $ a defect group of $E_{\delta}$.

Let $ \iota^* : \bL^* \to [\bL, \bL]^*$ be a map dual to the inclusion $[ \bL, \bL] \to \bL$ chosen to be compatible with $F$. Then
\[[\bL, \bL]^*= \prod_{ \delta \in Orb_F(\Delta)} \prod_{\bX \in \delta} \bX^* \]
where for each $\delta \in Orb_F (\Delta)$, and $\bX \in \delta $, $\bX^*$ is dual to $\bX$.
Let $s\in \bL^{*F}$ be a semi-simple label of $C$ and let $\bar s =\prod_{\delta \in Orb_F(\Delta)} \prod_{\bX \in \delta} \bar s_{\bX}$ be the image of $s$ under $\iota^*$. Then for each $\delta \in Orb_F(\Delta) $, $\prod_{\bX \in \delta} s_{\bX} $ is a semi-simple label of $E_{\delta}$.
Since $s$ is quasi-isolated in $\bG^*$, $\bar s $ is quasi-isolated in $[\bL, \bL]^F$ (see \cite[Prop.~2.3]{Bon2004}), and consequently, $\prod_{\bX \in \Delta} s_{\bX} $ is quasi-isolated in
 $\prod_{\bX \in \delta} \bX^* $.
Further, if $\bX \in \delta$, then through the isomorphism of $ (\prod_{\bY \in \delta} \bY)^F$ with $\bX^{F^{|\delta|}} $ induced by projection onto $\bX$, $E_{\delta}$ is identified with a block of $\bX^{F^{|\delta|}} $ with quasi-isolated semi-simple label $s_{\bX}$.

Let $\delta \in Orb_F(\Delta)$ and $\bX \in \delta $ be such that $\bX$ is not of type $A_n$.
By Lemma \ref{quasi-iso-ab},
$\bX $ is not of type $B_n$, $C_n$ or $D_n$.
If $\bX $ is of type $G_2 $ or $F_4 $, then $ \bL = \bG $, $ Z=1 $ and by Lemma \ref{quasi-iso-ab}, $B$ is unipotent, and $P=1$, so (i) of the proposition holds.

Suppose $\bX $ is of type $E_6 $. Then $\bG $ is of type $E_6$, $ E_7 $ or $E_8$ and by rank considerations, $\delta =\{\bX\} $.
Further, $Z\cap X =1$. So $D_{\delta}$ is abelian and
by Lemma \ref{quasi-iso-ab} , $E_{\delta}$ is a unipotent block of defect $0$.
If $\bG $ is of type $E_6 $, then $ \bG =\bL= \bX $, whence $B$ is a unipotent block with trivial defect groups, that is (i) holds.
Suppose $\bG $ is of type $E_7$. By rank considerations,
$ \bL = Z^{\circ} (\bL) \bX $.
Let $ M= Z^{\circ}(\bL)^F \bX^F $. Then $M$ is a normal subgroup of $ \bL^F $ and $\bL^F/M$ is abelian of order
$| Z^{\circ} (\bL)^F \cap \bX^F| $. In particular $[ \bL^F :M]$ is $1$ or $3$, which means that $D \leq M$. Since $D_{\delta}=1$,
 and since $D$ is a defect group of a block of $M$ covered by $C$ and $Z^{\circ}(\bL)^F$ and $\bX^F$ commute with each other, we may assume that $D \leq Z^{\circ}(\bL) ^F$. So (iv) of the proposition holds.

Now suppose that $\bG $ is of type $E_8 $. Either $ \bL = Z^{\circ} (\bL)\bX$ or $[\bL, \bL] =\bY \times \bX $, where $\bY$ is of type $A_1$. In the former case, we argue as above to conclude that case (iv) holds. Since $Z=1 $, by Lemma \ref{quasi-iso-ab}, $\bL$ has no component of type $A_1$. This rules out the latter case.

 Suppose that $\bX $ is of type $E_7 $. Then $\bG $ is of type $E_7$ or of type $E_8 $ and $\delta $ consists of a single component. Suppose $\bG $ is of type $E_7$. Then $ \bL =\bG$ and hence by Lemma \ref{quasi-iso-ab}, $B$ is quasi-isolated and $P/Z \cong C_2 \times C_2$, that is case (ii) holds. The case that $\bG =E_8 $ is ruled out by Lemma \ref{quasi-iso-ab} since if $\bG$ is of type $E_8$, then $Z=1 $.

Finally, suppose that $\bX $ is of type $E_8$. Then, $\bG =\bL $ is of type $E_8$ and by Lemma~\ref{quasi-iso-ab}, $B$ is unipotent and $P$ is trivial, so case (i) holds.
\end{proof}

{\bf Remark.} Some special cases of the above theorem are given in \cite[Theorem 1.3]{KKL}. We point out that (iii) of \cite[Theorem 1.3]{KKL} is not correct for the classical groups of type $D_n(q)$ and $r=3$: the above analysis shows that if $n=2t$, $q\equiv -3\pmod{8}$, then there do exist non-nilpotent $2$-blocks with elementary abelian defect groups of order $8$ in the simple group of type $D_n(q)$. This does not pose a problem for the conclusion of Theorem 1.1 of \cite{KKL} as by part (ii) of the above proposition such a block has $3$ simple modules, and the conclusion of Theorem 1.1 of \cite{KKL} for groups of type $D_n(q)$ follows by Theorem 5.1 and Proposition 3.2 of \cite{KKL}.

We record the following for later use.

\begin{proposition}\label{nilp-cov-nilp} Suppose that ${\mathbb F} $ has odd characteristic, $\bG$ is simple and simply connected. Let $B$ be a $2$-block of $G$ with semi-simple label $s$ and defect group $P$. Suppose that $Z \leq O_2(Z(G))$ is such that $P/Z$ is abelian and that $C_{\bG^*}^{\circ} (s) $ is a torus, but $C_{\bG^*}(s) $ is not connected. Then, $\bG$ is of type $A_n $ or $ E_6 $. Further, if $\bG $ is of type $E_6 $, then
there exists an $F$-stable Levi subgroup $\bL$ of $\bG $ and a $2$-block $C$ of $\bL^F$ in Bonnaf\'e-Rouquier correspondence with $B$ such that $ [\bL, \bL]^ F $ is a direct product of at most two groups of type $A_2$. In particular, if $\bG $ is of type $E_6$ (so that $Z=1$) and
$P$ is elementary abelian of order $16$ then a defect group $P'$ of $C$ has the form $P'= P_1 \times P_2 $, $P_1\cong P_2 \cong C_2 \times C_2 $ and both $P_1$ and $P_2 $ are invariant under $N_{\bL^F}(P')$.
\end{proposition}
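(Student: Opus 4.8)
The plan is to argue by the type of $\bG$, using the structure of disconnected centralizers of semisimple elements in exceptional groups together with the Bonnaf\'e--Rouquier correspondence. First I would note that since $C_{\bG^*}^\circ(s)$ is a torus but $C_{\bG^*}(s)$ is disconnected, the component group $C_{\bG^*}(s)/C_{\bG^*}^\circ(s)$ is non-trivial; as this is a section of the fundamental group of $[\bG,\bG]$ (equivalently of $Z(\bG^*)/Z^\circ(\bG^*)$-type data), only those types with a non-trivial fundamental group admitting such an $s$ can occur. Combined with the hypothesis $P/Z$ abelian and $Z \leq O_2(Z(G))$, and the fact that $s$ has odd order (so the $2$-part of $Z(\bG)$ must be used up inside $Z$), one eliminates $B_n$, $C_n$, $D_n$, $G_2$, $F_4$, $E_7$, $E_8$ by the same reasoning as in the proof of Proposition \ref{2abelian-simp}: for these types either $C_{\bG^*}(s)$ is automatically connected when $s$ is an odd-order element (as the component group is a $2$-group of exponent dividing $o(s)$), or the Sylow $2$-subgroups fail to become abelian modulo a central $2$-subgroup. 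This leaves $\bG$ of type $A_n$ or $E_6$.

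Next, for the $E_6$ case I would pass to a Bonnaf\'e--Rouquier correspondent. Since $C_{\bG^*}^\circ(s)$ is a torus, $s$ is contained in some proper $F$-stable Levi $\bL^*$ with $C_{\bG^*}(s) = C_{\bL^*}(s)$ and $s$ quasi-isolated in $\bL^*$; let $\bL$ be an $F$-stable Levi of $\bG$ dual to $\bL^*$, and let $C$ be the $2$-block of $\bL^F$ in Bonnaf\'e--Rouquier correspondence with $B$. The key point is to identify $[\bL,\bL]$: since $\bL^*$ is a proper Levi of $E_6$ in which $s$ (of odd order, with $C_{\bL^*}^\circ(s)$ a torus but $C_{\bL^*}(s)$ disconnected) is quasi-isolated, the only way to get a disconnected centralizer inside a Levi of $E_6$ by an odd-order element is for a component of $[\bL,\bL]$ to be of type $A_2$, with the order-$3$ quasi-isolated element of $PGL_3$ (via Lemma \ref{quasi-A-nilp}, whose non-trivial component group is $C_3$) realizing the disconnectedness. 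Rank considerations in $E_6$ then force $[\bL,\bL]$ to be a product of at most two factors of type $A_2$ (their total rank is at most $4$, and a single $A_2$ or two orthogonal $A_2$'s both occur as Levi types in $E_6$). One also checks here that $Z = 1$ since $Z(\bG)$ has order prime to $2$ for simply connected $E_6$ (either $1$ or $3$), so $P/Z = P$.

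Finally, for the last, most concrete assertion, suppose $\bG$ is of type $E_6$ and $P$ is elementary abelian of order $16$. Then the defect group $P'$ of $C$ is a Sylow $2$-subgroup of a suitable Levi (using \cite[Prop.~1.5]{En00} as in the previous proofs) sitting inside $Z^\circ(\bL)^F \cdot [\bL,\bL]^F$; from $[\bL,\bL]^F$ a product of at most two $A_2$-type groups $X_i^{F^{|\delta_i|}} \cong SL_3(\pm q^{\cdot})$ or $PGL_3(\pm q^{\cdot})$, whose Sylow $2$-subgroups are (abelian, hence, by the homocyclic-rank-one analysis in Proposition \ref{2abelian-levi} and order counting) Klein four-groups exactly when the block is the relevant non-principal quasi-isolated block, one gets $P' = P_1 \times P_2$ with each $P_i \cong C_2 \times C_2$ coming from one of the two components (the torus part $Z^\circ(\bL)^F$ contributing trivially to the $2$-part since the total defect is $16 = 4 \cdot 4$). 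The invariance of $P_1$ and $P_2$ under $N_{\bL^F}(P')$ follows because $N_{\bL^F}(P')$ permutes the components of $[\bL,\bL]^F$, and the two $A_2$-factors here are \emph{not} interchanged: they carry distinct semisimple labels $s_{\bX}$ and, more robustly, any $F$-orbit of components was already absorbed into the definition of the $\delta_i$ — the argument of Lemma \ref{dynkin-con}(ii) type (non-transitivity of the relevant Weyl-group normalizer on the two orthogonal $A_2$'s inside $E_6$) shows $N_{\bL^F}(P')$ cannot swap them.

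The main obstacle will be the $E_6$ Levi-combinatorics step: pinning down precisely which $F$-stable Levi subgroups of $E_6$ carry an odd-order quasi-isolated element with disconnected centralizer, and verifying that the resulting semisimple label forces the two $A_2$-components to be $N_{\bL^F}(P')$-stable rather than merely stable up to swapping. I expect this to require a short case analysis of subsystems of $E_6$ of type $A_2$, $A_2 \times A_2$ (and checking $A_2 \times A_2 \times A_2 \subset E_6$ does not arise for a single $s$ with $C^\circ(s)$ a torus), together with the observation that the two orthogonal $A_2$'s in $E_6$ lie in different $W(E_6)$-orbits of such subsystems, so are never conjugate.
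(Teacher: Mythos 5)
Your overall route is the paper's. For the first claim you use exactly the paper's mechanism: $C_{\bG^*}(s)/C_{\bG^*}^{\circ}(s)$ has exponent dividing the odd number $o(s)$ and embeds into a group isomorphic to $Z(\bG)/Z^{\circ}(\bG)$, which is a $2$-group (or trivial) outside types $A_n$ and $E_6$; note this needs no appeal to the abelian-defect hypothesis. For $E_6$ you likewise pass to a Levi $\bL^*$ in which $s$ is quasi-isolated, observe that each component label $s_{\bX}$ is quasi-isolated with toral connected centraliser, and use Lemma \ref{quasi-A-nilp} plus rank considerations to force components of type $A_2$. Two small repairs there: a proper Levi of $E_6$ has semisimple rank at most $5$, not $4$ (the conclusion ``at most two $A_2$'s'' is unaffected), and your bare assertion that ``only $A_2$ with an order-$3$ element works'' needs the extra ingredient the paper uses, namely that the component group has order $3$ (it sits inside $Z(\bG)/Z^{\circ}(\bG)\cong\mu_3$), so that $3\mid o(s_{\bX})$; this is what rules out, for instance, an $A_4$-component with an order-$5$ quasi-isolated label, which your sketch does not address.

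The genuine problem is your justification of the final invariance claim. The invariance of $P_1$ and $P_2$ under $N_{\bL^F}(P')$ has nothing to do with Weyl-group orbits or with the semisimple labels of the two $A_2$-factors: $N_{\bL^F}(P')\leq \bL^F\leq \bL$, and each component of $[\bL,\bL]$ is normal in the connected group $\bL$ (a connected group cannot permute its components nontrivially), so every element of $\bL^F$ normalises each $\bX_i^F$; taking $P_i$ to be the intersection of $P'$ with the corresponding normal subgroup, the invariance is automatic — this is why the paper treats it as immediate from the structure of $\bL^F$. By contrast, your two supporting claims are unsound: there is no reason the two factors carry distinct labels, and the claim that the two orthogonal $A_2$'s lie in different $W(E_6)$-orbits is false (all $A_2$-subsystems of $E_6$ are $W$-conjugate); what would matter is conjugacy under the normaliser in $W$ of the corresponding parabolic subgroup, and even that is irrelevant here because you are conjugating by elements of $\bL^F$, not by elements normalising $\bL$ from outside. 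Finally, your order-$16$ count silently assumes both $A_2$-factors are $F$-stable and each contributes a Klein four group; one should also allow the configuration where $[\bL,\bL]^F$ is a single $A_2$-type group and $Z^{\circ}(\bL)^F$ supplies part of $P'$, though in every configuration the invariance follows from the same normality observation.
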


\begin{proof} We freely use the notation of the proof of Proposition \ref{2abelian-simp}.
Let $\bL$ be a Levi subgroup of $\bG$ in duality with an $F$-stable Levi subgroup of $\bG^* $ containing $C_{\bG}^*(s) $ and such that $s$ is quasi-isolated in $\bL$.
The exponent of $C_{\bG^*} (s) / C_{\bG^*}^{\circ} (s)= C_{\bL^*} (s) / C_{\bL^*}^{\circ} (s) $ is a divisor of the order of $s$ (see \cite[13.15]{DiMi})
and also $C_{\bL^*}(s)/ C_{\bL^*}^{\circ} (s) $ is isomorphic to a subgroup of $ Z(\bL )/Z^{\circ}(\bL) \leq Z(\bG)/Z^{\circ} (\bG)$. Since $s$ has odd order and $Z(\bG) $ is a $2$-group unless $\bG $ is of type $A_n $ or $E_6 $, the first assertion follows.

Now, suppose that $\bG$ is of type $E_6 $, so $C_{\bL^*}(s)/ C_{\bL^*}^{\circ} (s) =Z(\bL )/Z^{\circ}(\bL) $ has order $3$ and $3$ divides the order of $s$. By \cite[Table 3]{Bon2004}, $\bG^* \ne \bL^* $. So, $\bL $ is a proper Levi subgroup of $\bG$ and in particular has semi-simple rank at most $5$. Let $X$ be a component of $[\bL, \bL]$. Then,
$s_{\bX}$ is a quasi-isolated element of $\bX^*$, so as explained in the proof of Proposition \ref{2abelian-simp},
$\bX $ is not of classical type $D_n$. Thus, $\bX $ is of type $A_n$. By Lemma \ref{quasi-A-nilp},
 $n+1 = o(s_{\bX}) $. Since $1 \leq n \leq 5 $ and $o(s_{\bX}) $ is odd and a multiple of $3$, it follows that
 $n=2 $.

Hence all components of $[\bL, \bL]$ are of type $A_2 $ and there are at most two components. Since the $2$-rank of special linear or unitary groups in odd dimension is $2$, the final assertion is immediate from the structure of $\bL^F$.
\end{proof}

\section{Structure Theorem}
\label{main theorem}

Our aim in this section is to prove the following.
\begin{theorem} \label{ab-2-str}  Let $\ell=2 $ and let $G$ be a quasi-simple group. If $B$ is a block of $\CO G$ with abelian defect group $P$, then one (or more) of the following holds:
\begin{enumerate}[(i)]
\item $G/Z(G)$ is one of $A_1(2^a)$, $\,^2G_2(q)$ (where $q \geq 27$ is a power of $3$ with odd exponent), or $J_1$, $B$ is the principal block and $P$ is elementary abelian.
\item $G$ is $Co_3 $, $B$ is a non-principal block, $P \cong C_2 \times C_2 \times C_2 $ (there is one such block).
\item There exists a finite group $\tilde G$ such that $G \unlhd \tilde G $, $Z(G) \leq Z(\tilde G) $ and such that $B$ is covered by a nilpotent block of $\tilde G$.
\item $B$ is Morita equivalent to a block $C$ of   $\CO L $   where $L=L_0 \times L_1$  is  a subgroup of $G$ such that the following holds: The defect groups of $C$ are isomorphic to $P$, $L_0$ is abelian and the block of $\CO L_1 $ covered by $C$ has Klein $4$-defect groups.
In particular, $B$ is Morita equivalent to a tensor product of a nilpotent block and a block with Klein $4$-defect groups.
\end{enumerate}
\end{theorem}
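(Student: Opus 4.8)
The plan is to argue by the classification of finite simple groups, treating one family of quasi-simple groups $G$ at a time and writing $S=G/Z(G)$. Two general reductions organise the work. First, alternative (iii) is insensitive to central $\ell'$-quotients (Proposition~\ref{watanabe} and Lemma~\ref{nilcoveredandquotients}), while alternatives (iii) and (iv) are invariant under Morita equivalence, which by Lemma~\ref{moritaquotient} and Proposition~\ref{bonnrou-cent} descends along central $\ell$-quotients; hence it suffices to prove the conclusion for one convenient quasi-simple cover of $S$ and transport it down to $G$, the natural choice being $\bG_{\mathrm{sc}}^{F}$ when $S$ is of Lie type, with the finitely many groups of exceptional Schur multiplier treated directly alongside the sporadic groups. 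Second, the target is always (iii) or (iv): unless $S$ is one of the groups listed in (i)--(ii) we show that $B$ is either nilpotent-covered or Morita equivalent to the tensor product of a nilpotent block with a block having Klein four defect groups.

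For the sporadic simple groups and the quasi-simple groups of exceptional Schur multiplier I would run through the known block data (defect groups, decomposition matrices, known Morita/Puig types): this produces the non-principal block of $Co_3$ with defect group $C_2\times C_2\times C_2$ as alternative (ii), the principal block of $J_1$ as a case of (i), and shows every other such block is nilpotent-covered. For $A_n$ ($n\geq 5$) a $2$-block has abelian defect group only if it has $2$-weight at most $2$: weight $\leq 1$ forces defect group of order at most $2$, so the block is nilpotent and (iii) holds; a weight-$2$ block has Klein four defect group, so Erdmann's classification of such blocks (\cite{er82} and its integral refinement) gives (iii) or (iv). The covering groups are handled by the analogous bar-partition combinatorics, with the small exceptional covers done individually. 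If $S$ is of Lie type in defining characteristic $2$, its $2$-blocks of positive defect are just the principal block (the rest have defect zero, hence are nilpotent), so an abelian-defect positive-defect block forces abelian Sylow $2$-subgroups; by Walter's classification this occurs only for $S\cong A_1(2^a)$, a case of (i). The Ree groups $\,^2G_2(q)$, excluded from the analysis of Sections~\ref{Leviabelian2Sylow}--\ref{reductive groups}, have elementary abelian Sylow $2$-subgroups of order $8$ and no positive-defect $2$-block besides the principal one, the remaining case of (i); the Suzuki groups and the $\,^2F_4$-groups have non-abelian Sylow $2$-subgroups, so contribute only defect-zero blocks.

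The substance of the proof is the case of $S$ of Lie type in odd defining characteristic with $\ell=2$. After the reduction above I may assume $\bG$ is simple and simply connected, $G=\bG^{F}$, and $Z$ is a central $2$-subgroup of $G$ with $P/Z$ the abelian defect group of the block of $S$ over which $B$ lies, $s$ a semisimple label of $B$. If $C_{\bG^*}^{\circ}(s)$ is a torus, Lemma~\ref{lem:nilp-cov-char} directly yields an overgroup $G\unlhd\tilde G$ with a nilpotent block covering $B$, so (iii) holds. Otherwise I apply Proposition~\ref{2abelian-simp} and dispose of its five cases. In case~(i) of that proposition $P=1$, so $B$ has defect zero and is nilpotent. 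In cases~(ii) and~(iii) the block of $S$ has Klein four defect groups, so Erdmann's classification again gives (iii) or (iv). In case~(iv), the Bonnaf\'e--Rouquier correspondent of $B$ lives in $\bL^{F}$ with $[\bL,\bL]^{F}\cong E_6(q)$ or $\,^2E_6(q)$ and has defect group inside the torus $Z^{\circ}(\bL)^{F}$; since $Z^{\circ}(\bL)^{F}\cap[\bL,\bL]^{F}$ has odd order, the descended block has defect group meeting the normal subgroup $\overline{[\bL,\bL]^{F}}$ trivially, it covers the defect-zero unipotent block of that normal subgroup, and the corresponding quotient is abelian; feeding this into Proposition~\ref{kp} produces a central extension of an abelian group by an $\ell'$-group, hence a nilpotent group, carrying a block Morita equivalent to $B$, so $B$ is nilpotent and (iii) holds. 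In case~(v), $\bL^{F}=Z^{\circ}(\bL)^{F}\times[\bL,\bL]^{F}$ with $[\bL,\bL]^{F}\cong\SL_2(q^{t})$, $q^{t}\equiv\pm 3\pmod 8$, and the block of $[\bL,\bL]^{F}$ covered by the correspondent is its principal block; since $Z$ meets $[\bL,\bL]^{F}$ in its centre, Proposition~\ref{bonnrou-cent} shows $B$ is Morita equivalent to the tensor product of a block of an abelian group (nilpotent) with the principal block of $\PSL_2(q^{t})$, which has Klein four defect group --- precisely alternative (iv), with $L_1=\PSL_2(q^{t})$ (or its Morita representative) and $L_0$ a Sylow $2$-subgroup of the torus factor.

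The main obstacle, I expect, is organisational: carefully transporting the structural conclusions --- nilpotency, nilpotent-coveredness, and Morita type --- between $G$, its simply connected cover $\bG_{\mathrm{sc}}^{F}$, and the intermediate quotients obtained by killing the $2$-part and the $2'$-part of the kernel separately (using Lemma~\ref{moritaquotient}, Lemma~\ref{nilcoveredandquotients}, Proposition~\ref{watanabe} and Proposition~\ref{bonnrou-cent}), and disposing of the groups of exceptional Schur multiplier outside this framework. The only genuinely block-theoretic point beyond this bookkeeping is to pin down, in cases~(iv) and~(v) of Proposition~\ref{2abelian-simp}, that the descended block is \emph{exactly} nilpotent, respectively \emph{exactly} a tensor product of a nilpotent block with a Klein four block; this is where the precise structure recorded in Proposition~\ref{2abelian-simp} --- the direct product decomposition of $\bL^{F}$, the identification of the covered block of $[\bL,\bL]^{F}$, and the odd order of $Z^{\circ}(\bL)^{F}\cap[\bL,\bL]^{F}$ --- is essential.
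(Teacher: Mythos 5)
Your proposal is correct and follows essentially the same route as the paper: a CFSG case division in which the sporadic, alternating and defining-characteristic-$2$ groups (and exceptional covers) are settled by known block data, while the heart of the argument for Lie type in odd characteristic splits according to whether $C^{\circ}_{\bG^*}(s)$ is a torus (Lemma \ref{lem:nilp-cov-char}) or not (Proposition \ref{2abelian-simp}), with the passage to central $2$-quotients handled via Lemma \ref{moritaquotient}, Proposition \ref{bonnrou-cent} and Proposition \ref{watanabe}. Your deviations are local and harmless: for cases (ii)/(iii) of Proposition \ref{2abelian-simp} the paper simply notes that a block of $G$ with Klein four defect group satisfies (iv) trivially with $L=G$ (no appeal to Erdmann's classification), in case (iv) it observes directly that the Bonnaf\'e--Rouquier correspondent has central defect group and is therefore nilpotent rather than routing through Proposition \ref{kp}, and in case (v) the abelian factor $L_0$ is taken to be the full quotient of $Z^{\circ}(\bL)^F$ rather than a Sylow $2$-subgroup of it.
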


\begin{proof} If $P$ is central in $G$, then $B$ is nilpotent and we are in case (iii). Hence we may assume that $P$ is non-central. We consider first the case $|P| \leq 8$ or $P$ is cyclic. If $P$ is cyclic or if $P \cong C_4 \times C_2$, then $B$ is again nilpotent, hence we are in the situation of (iii).
If $P$ is a Klein $4$-group, then we are in case (iv) (with $L_1=L=G$). Thus, we may assume that $P$ is a non-cyclic group of order at least $8$ and that if $|P|=8 $, then $P$ is elementary abelian.


Write $\bar G:= G/Z(G)$. Suppose $\bar G$ is a sporadic group. If $B$ is the principal block, then $\bar G =G =J_1$ and $P$ is elementary abelian of order $8$, hence we are in case (i). The non-principal $2$-blocks of quasi-simple groups with sporadic quotient are listed in~\cite{No}, and the only possibility with $|P|\geq 8 $ is $\bar G =G = Co_3 $ and $P$ elementary abelian of order $8$. Thus, $B$ is as in (ii).

Suppose $\bar G:= G/Z(G)$ is an alternating group $A_n $, $n \geq 5 $. The trivial group and $C_2 \times C_2 $ are the only abelian $2$-groups occurring as defect groups of $2$-blocks of $\bar G$, hence $G \ne \bar G$. If $G$ is a double cover of $\bar G$, then the lift of a Klein $4$-block of $\bar G$ has non-abelian defect groups of order $8$ and the lift of a defect zero block has central defect groups. Suppose that $G$ is an exceptional cover of $A_n$.
If $\bar G= A_6, A_7 $ and $G$ is a $3$- or $6$-fold covering of $\bar G$, then the Sylow $2$-subgroups of $G/Z(G)$ are non-abelian of order $8$ and consequently $G$ has no blocks with $P$ as defect group for which we have not accounted.

Suppose $\bar G$ is a finite group of Lie type in characteristic $2$ not isomorphic to any of $\,^2F_4(2)'$, $B_2(2)' $ or $G_2(2)' $ or $PSp_4(2)'$.
Then by \cite[Proposition 8.7]{CEKL}, the non-trivial defect groups of $2$-blocks are Sylow $2$-subgroups of $G$.
The only possibility is $\bar G= G = SL_2(2^a) $ and $B$ the principal block and we are in case (i).

If $\bar G$ is the Tits group $\,^2F_4(2)'$, then $\bar G=G$ and by~\cite{gap}, $G$ has precisely three blocks, two of defect $0$ and the principal block.
If $\bar G$ is isomorphic to $PSp_4(2)'$, then $\bar G \cong A_6$, a case we have already considered.
If $\bar G \cong G_2(2)'$, then $\bar G \cong \,^2A_2(3)$, hence $\bar G = G \cong \, ^2A_2(3)$, a case which will be handled below.
If $\bar G $ is isomorphic to $PSL_2(4)$, then $\bar G \cong A_5 $ a case that has been handled above.

Suppose that $\bar G$ is a finite group of Lie type in odd characteristic. By~\cite{gap} no faithful $2$-block of an exceptional covering of $\bar G$ has non-central defect groups which are abelian. Hence we may assume that $G$ is a non-exceptional covering of $\bar G$. So we have $G =\bG^F/Z$, where $ \bG $ is a simple and simply-connected group defined over an algebraic closure of the field of $p$ elements for $p$ an odd prime, $F: \bG \to \bG $ is a Steinberg endomorphism, and $Z \leq Z(\bG^F)$. By replacing $G$ by a suitable central extension and $B$ by a suitable (and Morita equivalent) lift with isomorphic defect groups if necessary, we may assume that $Z$ is a $2$-group. Let $\hat B$ be a $2$-block of $ \bG^F $ lifting $B$. Then $\hat B$ has a defect group $\hat P$ such that $Z \leq \hat P $ and $\hat P/Z =P $.

If $\bar G =\,^2G_2(q) $, $q$ a power of $3$, then, $\bar G = G$ and the Sylow $2$-subgroups of $G$ are elementary abelian of order $8$. Further, the principal block is the unique $2$-block of $G$ with maximal defect groups (see for instance \cite[Proposition 15.2] {KKL}), and so we are in case (i).

Hence we may assume that $\bG^F$ is not $\,^2G_2(q)$. Suppose first that a semi-simple label of $\hat B $ has a connected centraliser which is a torus. Then
by Lemma \ref{lem:nilp-cov-char}, there exists a finite group $ H $ and a nilpotent block $\hat E$ of $H$ covering $\hat B $ such that $ Z \leq Z(\bG^F) \leq Z( H ) $. Let $ \tilde B $ be the block of $\tilde G:=H/Z$ corresponding to $\hat E$. Then $\tilde B $ covers $B$ and $\tilde B $ is nilpotent. Since $\bG^F $ is quasi-simple, $Z(G) = Z(\bG^F)/Z \leq Z(H)/Z \leq Z(\tilde G)$. Hence we are in case (iii).

So, we may assume that the connected centraliser of a semi-simple label of $\hat B$ is not a torus. We apply Lemma \ref{2abelian-simp}
to $\hat B$. If $\hat B$ is as in case (ii) or (iii) of Lemma \ref{2abelian-simp}, then case (iv) of the theorem holds.
If $\hat B$ is as in case (i) or (iv) of Lemma \ref{2abelian-simp}, then $ \hat B$ is nilpotent, hence so is $B$ by~\cite[8.2]{pu99} and we are in case (iii) of the theorem.
Suppose $\hat B$ is as in case (v) of Lemma \ref{2abelian-simp}. Let $\bL$ and $\hat C$ be as in Lemma \ref{2abelian-simp} (replacing $C$ by $\hat C$) and let $C$ be the block of $L:=\bL^F/Z $ corresponding to $\hat C$. By Lemma \ref{2abelian-simp}, $L$ is a direct product of $L_0:= Z^{\circ} (\bL)/ (Z \cap P_0) $ and $ L_1:= [\bL, \bL]^F/(Z \cap P_1)$ and the $2$-block of $ L_1 $ covered by $C$ has Klein $4$-defect groups. By Lemma \ref{moritaquotient}, $B$ and $C$ are Morita equivalent. The defect groups of $B$ and $C$ are isomorphic by \cite[Theorem~ 1.3]{kessarmalle}. So, we are in case (iv) of the theorem.
\end{proof}

{\bf Remark.} Whenever case (iv) of the above theorem holds, then setting $W= O_2(Z(G)) $, we have $W\leq \bL^F/Z$ and by the proof of Lemma \ref{moritaquotient},
 the Morita equivalence between $B$ and $C$ may be realised by a bimodule on which $\Delta (W) $ acts trivially.

\medskip

The principal $2$-blocks of the $\,^2G_2(q)$ all have elementary abelian defect groups of order $8$ and by~\cite{lm80} they all have the same decomposition matrices. It seems to be folklore that they are all Morita equivalent, but we are unable to find a reference for this. We can however easily show that there are only finitely many Morita equivalence classes amongst them:

\begin{lemma}
\label{Donovan_for_Ree}
Consider the groups $\,^2G_2(3^{2m+1})$ for $m \geq 1$. There are only finitely many Morita equivalence classes of blocks amongst the principal $2$-blocks of these groups.
\end{lemma}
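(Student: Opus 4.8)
The plan is to exploit the fact that the principal $2$-block of $G_m := {}^2G_2(3^{2m+1})$ has a fixed defect group $D$ (elementary abelian of order $8$), so by general theory the block is controlled by the fusion system $\mathcal{F}$ on $D$ together with certain cohomological data. First I would recall that the Sylow $2$-subgroup $D$ of $G_m$ is elementary abelian of order $8$ and that $N_{G_m}(D)/C_{G_m}(D)$ is independent of $m$; in fact the inertial quotient is a fixed group (isomorphic to a Frobenius group of order $21$ acting on $D \cong C_2^3$), so the Brauer correspondent $b_m := B_0(\CO N_{G_m}(D))$ lies in a single Morita equivalence class as $m$ varies. Thus it suffices to bound the number of Morita equivalence classes of the $B_0(\CO G_m)$ themselves, and for this I would invoke Abelian Defect Group Conjecture-type results, or more directly a result guaranteeing that for this particular small defect group the block is determined up to Morita equivalence over $\CO$ by the fusion system and the associated $2$-cocycle in $H^2(\Out_{\mathcal F}(D); k^\times)$ (or over $\CO$, in $H^2$ with values in $\CO^\times$). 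Since $D$ and the fusion system are fixed and the relevant second cohomology group is finite, there are only finitely many possibilities.

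More concretely, the key steps in order are: (1) identify $D \in \Syl_2(G_m)$ as elementary abelian of order $8$, uniformly in $m$; (2) compute the fusion system of $B_0(\CO G_m)$ on $D$ and observe it is the same fusion system $\mathcal F$ for all $m$ (this follows from the known subgroup structure of the Ree groups — the normaliser of $D$ is a fixed group up to isomorphism, and $D$ is the unique maximal abelian $2$-subgroup so fusion is controlled by $N_{G_m}(D)$); (3) apply the theorem that a block with abelian defect group $D$ and a given fusion system $\mathcal F$ has only finitely many Morita equivalence types, the finiteness coming from the finiteness of $H^2(\Aut_{\mathcal F}(D); \CO^\times)$ (equivalently, the finitely many central extensions and twisted group algebras that can arise). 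Alternatively, and perhaps more cleanly given the tools of this paper, one can cite Usami--Puig or the explicit classification of blocks with elementary abelian defect group of order $8$ and inertial quotient of order $21$, which yields a bounded list.

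The main obstacle will be step (2)–(3): pinning down that the Morita equivalence type of $B_0(\CO G_m)$ is genuinely controlled by the (fixed) local data, rather than just its invariants such as the decomposition matrix (which by \cite{lm80} is already known to be constant). Having equal decomposition matrices does not by itself imply Morita equivalence, so the argument must either route through a structural result on blocks with this specific small defect group — for instance that such a block is necessarily Morita equivalent to a twisted group algebra $\CO_\gamma(D \rtimes E)$ for the inertial quotient $E$ and some $\gamma \in H^2(E; \CO^\times)$, of which there are finitely many — or invoke a known verification of Donovan's conjecture for the group $D \cong C_2^3$ once one knows the fusion is constant. I expect the cleanest route is: the inertial quotient and fusion system being fixed, the block is (by a result on nilpotent-covered blocks or by the Külshammer--Puig structure theorem for blocks over such subgroups) Morita equivalent to one of finitely many twisted group algebras, completing the proof.
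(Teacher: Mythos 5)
There is a genuine gap at your step (3), and it is precisely the step that carries all the weight. The assertion that a block with fixed abelian defect group $D$ and fixed fusion system $\mathcal F$ lies in one of finitely many Morita equivalence classes is not a theorem: it is essentially a fusion-refined form of Donovan's conjecture, open in general and not known for this case. Your proposed fallbacks do not repair this. The K\"ulshammer--Puig ``twisted group algebra'' structure $\CO_\gamma(D\rtimes E)$ is available for blocks with normal defect group, and Puig's theorem (Proposition \ref{puig}) for nilpotent-covered blocks; the principal block of $\,^2G_2(q)$ is neither, and indeed the whole reason these groups appear as the exceptional case (i) of Theorem \ref{ab-2-str} is that no such structural Morita statement is known for them (if $B_0(\,^2G_2(q))$ were Morita equivalent to a twisted group algebra of $D\rtimes E$, the paper would not need this lemma at all). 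Usami--Puig type results and \cite{KKL} give perfect isometries and numerical/weight-conjecture information for elementary abelian defect group of order $8$, not Morita equivalences, and as you yourself note the constancy of the decomposition matrices from \cite{lm80} does not by itself pin down the Morita type. So the argument as planned cannot be completed from the cited tools.

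The paper's proof avoids local control entirely and is much more elementary: by \cite{lm80} all these principal blocks have the same decomposition, hence the same Cartan matrix, whose entries sum to $76$; by the rationality argument in the proof of \cite[1.4]{ke04}, the basic algebra $fB_mf$ descends to an $\mathbb F_2$-algebra $A_m$ with $fB_mf\cong k\otimes_{\mathbb F_2}A_m$, and $\dim_{\mathbb F_2}A_m=76$. A finite-dimensional algebra over the finite field $\mathbb F_2$ of fixed dimension admits only finitely many isomorphism types, so there are finitely many possibilities for $A_m$ and hence for the Morita class of $B_m$. Note this yields finiteness without identifying the Morita classes, which your approach was implicitly trying (and is not currently able) to do; if you want to salvage your plan, you would need to replace step (3) by exactly this kind of bounded-dimension-over-a-finite-field argument, for which the constant Cartan matrix from \cite{lm80} is the essential input.
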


\begin{proof}
By~\cite{lm80} the principal $2$-blocks all have the same decomposition matrices. Write $B_m$ for the principal $2$-block of $\,^2G_2(3^{2m+1})$ and let $fB_mf$ be an associated basic algebra. By the proof of~\cite[1.4]{ke04}, there is an $\mathbb{F}_2$-algebra $A_m$ such that $fB_mf \cong k \otimes_{\mathbb{F}_2} A_m$. Now $A_m$ has dimension equal to the sum of the entries of the Cartan matrix, i.e., $76$. Hence $|A_m|=2^{76}$, and so there are only finitely many possibilities for $A_m$, and hence for the Morita equivalence class of $B_m$.
\end{proof}

\section{Proof of Theorem \ref{homocyclictheorem} }
\label{homocyclic}

Let us keep  the  notation   of Theorem \ref{homocyclictheorem}. In particular, we suppose that
$D$ is an  abelian  $2$-group of rank $2$, so that $D$ is isomorphic to a direct product $C_{2^m} \times C_{2^n}$ of two cyclic subgroups $C_{2^m}$ and $C_{2^n}$. Write $b$ for the unique block of $\CO N_G(D)$ with Brauer correspondent $B$. The following facts are known:

\begin{itemize}
\item If $m \ne n$, then Aut$(D)$ is a $2$-group. Thus $B$ is
a nilpotent block. By the main result of~\cite{puig88}, $B$ is then Morita
equivalent to the group algebra $\CO D$.

\item If $m=n=1$, then $D$ is a Klein $4$-group. This case was completed by Erdmann in~\cite{er82} for blocks defined over $k$, and extended to blocks defined over $\CO$ by Linckelmann in~\cite{lin94}, where it is proved that $B$ is Morita equivalent to the group
algebra $\CO D$, to the group algebra $\CO A_4$ or to the
principal block of the group algebra $\CO A_5$. (Here, $A_n$ denotes the
alternating group of degree $n$.) Therefore, in the following we will
assume that $m=n>1$.


\item If $D \in {\rm Syl}_2(G)$ (and $m=n>1$), then $G$ is
solvable by a theorem of Brauer (Theorem 1 of~\cite{bra64}).

\item In general, $B$ is perfectly isometric to $b$. This is stated without explicit proof in Remark 1.6 of~\cite{puigusami95}, and a proof is given in Satz 3.3 of~\cite{sambalethesis}. We note that this result does not use the classification of finite simple groups. Consequently, if $m=n>1$, then $B$ is either nilpotent (in which case $l(B)=1$ and $k(B)=|D|$) or $l(B)=3$ and $k(B)=(|D|+8)/3$.

\end{itemize}

The following are well-known, but we state them here for convenience:

\begin{lemma}
\label{normal}
Let $D=C_{2^m} \times C_{2^m}$. Then $\Aut(D)$ is a $\{2,3\}$-group, where $\lvert\Aut(D)\rvert_3=3$. If $\theta \in \Aut(D)$ has order three, then $\theta$ transitively permutes the three involutions in $D$.

Let $G$ be a finite group such that $D \lhd G$. Let $B$ be a block of $\CO G$ with defect group $D$ and let $b_D$ be a block of $\CO C_G(D)$ with $(b_D)^G=B$. Write $N_G(D,b_D)$ for the stabiliser of $b_D$ in $N_G(D)$ and $E_B=N_G(D,b_D)/C_G(D)$. Then $B$ is Morita equivalent to the group algebra $\CO (D \rtimes E_B)$.

If $O_2(Z(G)) \neq 1$, then $B$ is nilpotent.
\end{lemma}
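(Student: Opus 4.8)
The statement to prove is Lemma \ref{normal}, which has three parts: (1) that $\Aut(D)$ is a $\{2,3\}$-group with $|\Aut(D)|_3 = 3$ and that an order-$3$ automorphism permutes the three involutions transitively; (2) that if $D \lhd G$ and $B$ is a block of $\CO G$ with defect group $D$, then $B$ is Morita equivalent to $\CO(D \rtimes E_B)$; (3) that $O_2(Z(G)) \neq 1$ forces $B$ nilpotent.

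\medskip

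For part (1), since $D = C_{2^m} \times C_{2^m}$ is homocyclic, $\Aut(D) \cong \GL_2(\mathbb{Z}/2^m\mathbb{Z})$. The order of this group is $2^{4(m-1)} \cdot |\GL_2(\mathbb{F}_2)| = 2^{4(m-1)} \cdot 6 = 2^{4m-3} \cdot 3$, so it is a $\{2,3\}$-group with $3$-part exactly $3$. An element $\theta$ of order $3$ reduces mod $2$ to an order-$3$ element of $\GL_2(\mathbb{F}_2) \cong S_3$, which acts on the three nonzero vectors of $\mathbb{F}_2^2$ as a $3$-cycle. The three involutions of $D$ are $D[2] \setminus \{1\}$, and the natural map $D \to D/2D \cong \mathbb{F}_2^2$ restricts to a bijection $D[2] \to \mathbb{F}_2^2$ that is $\Aut(D)$-equivariant; so $\theta$ permutes the three involutions as a $3$-cycle, i.e.\ transitively. (If one wishes to avoid the reduction argument, one can instead note directly that an order-$3$ automorphism cannot fix any involution, since fixing one involution $z$ and acting on $D/\langle z\rangle \cong C_{2^m} \times C_2$ — whose automorphism group is a $2$-group for $m \geq 2$ — would force $\theta$ to be a $2$-element.)

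\medskip

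For part (2), this is the standard structure theory of blocks with normal defect group. Since $D \lhd G$, we have $C_G(D) \lhd G$ and $D \leq C_G(D)$ with $D \in \Syl_2(C_G(D))$, so $C_G(D) = D \times O_{2'}(C_G(D))$ by a theorem of Burnside, and the block $b_D$ of $\CO C_G(D)$ covered by $B$ is nilpotent with defect group $D$, hence (by Broué--Puig, or just directly) $b_D \cong \CO D$ as $\CO$-algebras. By the theory of blocks with normal defect group (Külshammer, building on Fong--Reynolds and the classification of nilpotent blocks), $B$ is Morita equivalent to a twisted group algebra $\CO_\gamma(D \rtimes E_B)$ for a $2$-cocycle $\gamma$ of $E_B$ with values in $\CO^\times$; and since $E_B \leq \Out(D)$ has order dividing $|\Aut(D)|$ which is a $\{2,3\}$-group, while $\CO$ contains enough roots of unity, one checks the relevant cohomology vanishes — in fact here $E_B$ is cyclic (a subgroup of $\Aut(D)$ acting faithfully with $D$ abelian, and by part (1) the only non-trivial possibilities up to conjugacy for the $3$-part are cyclic of order $3$, and in the present application $E_B$ will be $1$ or $C_3$) so $H^2(E_B, \CO^\times) = 0$ and the cocycle is trivial. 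Hence $B$ is Morita equivalent to $\CO(D \rtimes E_B)$. I would cite the appropriate reference (e.g.\ Külshammer's paper on crossed products and blocks with normal defect group) rather than reprove this.

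\medskip

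For part (3), suppose $Z := O_2(Z(G)) \neq 1$. Since $Z$ is a central $2$-subgroup, $Z \leq D$. Pass to $\bar G = G/Z$ with corresponding block $\bar B$ of defect group $D/Z$: by Proposition \ref{watanabe}, $B$ is nilpotent iff $\bar B$ is. Now $D/Z \lhd \bar G$ still, and the inertial quotient $E_{\bar B}$ embeds into $E_B$; more to the point, applying part (2) (or rather its proof) both to $B$ and to $\bar B$, nilpotency of $B$ is equivalent to $E_B = 1$, and since $E_B \leq \Aut(D)$ acts faithfully on $D$ while fixing the subgroup $Z$ pointwise, an order-$3$ element of $E_B$ would fix the involution(s) of $Z$ — contradicting the transitivity statement in part (1) unless $E_B$ has trivial $3$-part. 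The $2$-part of $E_B$ is trivial automatically (the inertial quotient of a block is a $2'$-group). Hence $E_B = 1$ and $B$ is nilpotent. The main obstacle, such as it is, is organising part (2) cleanly — making sure the twisted-group-algebra reduction is invoked correctly and that the cocycle genuinely vanishes — but since the lemma is flagged as well-known, citing Külshammer's theorem on blocks with normal defect groups together with the observation $H^2(E_B;\CO^\times)=0$ for cyclic $E_B$ suffices, and parts (1) and (3) are then short.
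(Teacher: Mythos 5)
The paper gives no proof of this lemma at all --- it is introduced with ``The following are well-known, but we state them here for convenience'' --- so there is no in-paper argument to compare against; I assess your proposal on its own terms. Your route is the standard one and is sound in outline: $\Aut(D)\cong \GL_2(\mathbb{Z}/2^m\mathbb{Z})$ has order $2^{4m-3}\cdot 3$; an order-three automorphism acts on the $2$-torsion subgroup through its reduction in $\GL_2(\mathbb{F}_2)$, hence as a $3$-cycle on the three involutions; K\"ulshammer's theorem on blocks with normal defect groups gives a Morita equivalence between $B$ and a twisted group algebra $\CO_\gamma(D\rtimes E_B)$, and the twist vanishes because $E_B$ is a $2'$-subgroup of the $\{2,3\}$-group $\Aut(D)$, hence trivial or $C_3$, and $H^2(E_B,\CO^\times)=1$ for cyclic $E_B$ since $\CO^\times$ is $3$-divisible (your blanket claim that inertial quotients are $2'$-groups is valid here precisely because $D$ is abelian, so $DC_G(D)=C_G(D)$); and for the last part, an order-three element of $E_B$ would fix the involutions of $O_2(Z(G))\le D$ pointwise, contradicting the transitivity in the first part, so $E_B=1$ and $B$ is nilpotent.

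Three local slips should be repaired, though none is fatal. First, the natural map $D\to D/D^2$ does \emph{not} restrict to a bijection on the $2$-torsion subgroup when $m\ge 2$: it kills it. The correct statement is that $\Omega_1(D)=\{x^{2^{m-1}}:x\in D\}$ and the $\Aut(D)$-equivariant isomorphism $D/D^2\to\Omega_1(D)$, $xD^2\mapsto x^{2^{m-1}}$, shows the action on $\Omega_1(D)$ factors through reduction modulo $2$. Second, in your parenthetical alternative, $D/\langle z\rangle\cong C_{2^{m-1}}\times C_{2^m}$, not $C_{2^m}\times C_2$; the point you need (its automorphism group is a $2$-group for $m\ge 2$) still holds. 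Third, the claim that $D\in\Syl_2(C_G(D))$, and hence the Burnside decomposition $C_G(D)=D\times O_{2'}(C_G(D))$, is unjustified and false in general (take $G=D\times H$ with $H$ of even order possessing a defect-zero block); what is true, and all you need, is that $D\le Z(C_G(D))$, so $b_D$ is nilpotent with central defect group $D$ --- and in any case K\"ulshammer's theorem applies to the normal-defect-group situation directly, without this detour.
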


We now give  the proof of Theorem \ref{homocyclictheorem}.


\begin{proof}
Let $B$ be a counterexample with $(|G:Z(G)|, |G|)$ minimised in the lexicographic ordering. By the first Fong reduction and minimality, $B$ is quasi-primitive, that is, for any normal subgroup $N$ of $G$, there is a unique block of $N$ covered by $B$. By the second Fong reduction and minimality, $O_{2'}(G)$ is cyclic and central in $G$.

Now suppose that $N:=O_2(G) \neq 1$, so that $N \subseteq D$. Then $B$ covers a unique block $B_C$ of $C:=C_G(N)$, and $B_C$ has defect group $D$. Since $1 \neq N \leq D \cap Z(C)$, the block $B_C$ is nilpotent. Thus by Proposition \ref{kp} and minimality, $C$ is nilpotent. Since $G/C$ is isomorphic to a subgroup of $\Aut(N)$, which is a $\{2,3\}$-group, $G$ is solvable. But we have $O_{2'}(G) \leq Z(G)$, so $C_G(N) \leq NZ(G)$, and $D = N$ since $D$ is abelian. Hence by Lemma \ref{normal} we have a contradiction to minimality.

Hence $O_2(G)=1$, so that $F(G)=O_{2'}(G)=Z(G)=:Z$. Let $N \lhd G$ such that $N/Z$ is a minimal normal subgroup of $G/Z$, and let $B_N$ be the unique block of $N$ covered by $B$. If $B_N$ is nilpotent, then we can use Proposition \ref{kp} again to obtain a contradiction to minimality. Thus we may assume that $B_N$ is not nilpotent, and in particular the defect group $D \cap N$ of $B_N$ is nontrivial. Moreover, we have $D \cap N \cong C_{2^t} \times C_{2^t}$ for some $t \leq m$. This implies that $N/Z$ is the only minimal normal subgroup of $G/Z$, and so $N=F^*(G)$ (the generalised Fitting subgroup).

Assume next that $G$ has a normal subgroup $K$ of index $2$. Let $B_K$ be the unique block of $K$ covered by $B$. Then $B_K$ is $G$-stable, and $B$ is the only block of $G$ covering $B_K$. Moreover, $D \cap K$ is a defect group of $B_K$ and $DK/K \in \Syl_2(G/K)$, so $G=DK$. Hence $2=|G/K|=|DK/K|=|D/D \cap K|$. This implies that $D \cap K$ is a direct product of two non-isomorphic cyclic groups. Hence $\Aut(D \cap K)$ is a $2$-group, and so $B_K$ is nilpotent. By Proposition \ref{kp} and minimality, $K$ is nilpotent. Then $G$ is solvable, a contradiction since $O_2(G)=1$ and $O_{2'}(G) \leq Z(G)$.

Hence $G=O^2(G)$. Let $L_1,\ldots,L_t$ denote the components of $G$. We have seen that these are permuted transitively by $G$, and $L:=L_1 * \cdots * L_t \lhd G$. Let $B_L$ be the unique block of $L$ covered by $B$, and let $B_i$ be the unique block of $L_i$ covered by $B_L$ ($i=1,\ldots,t$). Then $B_L$ has defect group $D \cap L$, and $B_i$ has defect group $D \cap L_i$ ($i=1,\ldots,t$). Thus $D \cap L = (D \cap L_1) \times \cdots \times (D \cap L_t)$, where $D \cap L_1, \ldots,D \cap L_t$ are conjugate in $G$ (since $B_1,\ldots,B_t$ are). This implies that $t \leq 2$. Since $G=O^2(G)$, we must have $t=1$ (by consideration of the kernel of the permutation action). This shows that $G$ has a unique component $L$, so that the layer $E(G)=L$ is quasi-simple and $F^*(G)=L * Z$.

Suppose that $G \neq L$. By the Schreier Conjecture $G/L$ is solvable. Since $G=O^2(G)$, it follows that there is a normal subgroup $N$ of $G$ such that $|G:N|=w$ for some odd prime $w$. Let $B_N$ be the unique block of $N$ covered by $B$. Now $B_N$ is $G$-stable and has defect group $D$. Suppose that $B$ is the unique block of $G$ covering $B_N$. Now $B_N$ has either $1$ or $3$ irreducible Brauer characters, according to whether $B_N$ has inertial index $1$ or $3$ respectively. If the inertial index is $1$, then $B_N$ is nilpotent, and Proposition \ref{kp} and minimality may be applied to obtain a contradiction. Hence $l(B_N)=3$. Similarly $l(B)=3$, since if $l(B)=1$, then $B$ is nilpotent. If $w > 3$, then each irreducible Brauer character in $B_N$ must be $G$-stable, and applying Clifford theory (noting that each simple module extends to $G$ since $G/N$ is cyclic and of odd order) we obtain $l(B)=3w>3$, a contradiction. If $w=3$, then the irreducible Brauer characters in $B_N$ are either all $G$-stable or 
are permuted transitively. If they are all $G$-stable, then as above we have $l(B)=3w=9$, a contradiction. Hence the three irreducible Brauer characters in $B_N$ are permuted transitively and by Clifford theory induce to an irreducible Brauer character, which must lie in $B$, and further we must have $l(B)=1$, a contradiction. Hence $B$ is not the unique block of $G$ covering $B_N$. In this case, since $w$ is an odd prime, every irreducible Brauer character in $B_N$ is $G$-stable, and it follows by Clifford theory (again using the fact that $G/N$ is a cyclic $2'$-group) that $B_N$ is covered by $w$ blocks of $G$ and that each of the three irreducible Brauer characters in $B$ is an extension of a distinct irreducible Brauer character in $B_N$. Hence we have a bijection given by restriction between the irreducible Brauer characters of $B$ and those of $B_N$, and by~\cite[4.1]{HidaKoshitani} $B$ and $B_N$ are Morita equivalent. This gives a contradiction to minimality.

A final application of the second Fong correspondence and minimality show that $Z(G) \leq [G,G]$. Hence we have shown that $G=L$, i.e., $G$ is quasi-simple, and that $Z(G)$ is cyclic of odd order. Proposition \ref{ab-2-str} and Proposition \ref{puig} give an immediate contradiction.
\end{proof}

\begin{corollary}
Let $\ell=2 $, $G$ a finite group  and let  $B$ be a block of  $\CO G$ with abelian defect group $D$ of rank $2$.  If  $D$ is homocyclic of order at least $16$, then there are precisely two Morita equivalence classes of blocks with defect group $D$. If $D$ is a Klein $4$-group, then there are precisely three Morita equivalence classes, and if $D$ is not homocyclic, then $B$ must be nilpotent.
\end{corollary}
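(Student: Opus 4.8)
The plan is to read off the three clauses from the isomorphism type of $D$: an abelian $2$-group of rank $2$ is $C_{2^m}\times C_{2^n}$ with $m\le n$, and the three clauses correspond to $2\le m=n$, to $m=n=1$, and to $m<n$.

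\emph{Homocyclic of order at least $16$.} Here $D\cong C_{2^m}\times C_{2^m}$ with $m\ge 2$, so Theorem \ref{homocyclictheorem} applies and shows that every block with defect group $D$ is Morita equivalent to $\CO D$ or to $\CO(D\rtimes C_3)$; hence there are at most two Morita equivalence classes. I would then check that both are attained and that they are distinct. They are attained because $\CO D$ is a block of $\CO D$ with defect group $D$, and $D$ is a normal abelian Sylow $2$-subgroup of $D\rtimes C_3$, so the principal block of $\CO(D\rtimes C_3)$ (which is in fact all of $\CO(D\rtimes C_3)$, since $O_2(D\rtimes C_3)=D$ lies in every defect group and a short count of irreducible characters rules out a nilpotent block) has defect group $D$. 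They are distinct because $\CO D$ is a local ring, so $l=1$, while the simple modules of $\CO(D\rtimes C_3)$ are inflated from $C_3$, so $l=3$, and the number of simple modules is a Morita invariant; equivalently, $\CO D$ is nilpotent and $\CO(D\rtimes C_3)$ is not, its inertial quotient being $C_3$.

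\emph{Klein four and non-homocyclic.} If $D$ is a Klein four group, the statement is exactly the classification of Erdmann \cite{er82} and Linckelmann \cite{lin94} recalled at the start of this section: $B$ is Morita equivalent to one of $\CO D$, $\CO A_4$, or the principal block of $\CO A_5$. All three occur (take $G=D$, $G=A_4$, $G=A_5$ respectively), and they are pairwise non-equivalent: $\CO D$ is singled out by $l=1$, and the fact that $\CO A_4$ and the principal block of $\CO A_5$ lie in different Morita classes is part of the cited classification. If $D$ is not homocyclic, then $m<n$, so $\Aut(D)$ is a $2$-group, whence the inertial quotient of $B$ is trivial and $B$ is nilpotent (as recalled at the start of the section), and Morita equivalent to $\CO D$ by \cite{puig88}.

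There is no real obstacle here: all of the substance is Theorem \ref{homocyclictheorem} together with the quoted results of Erdmann, Linckelmann and Puig, and what remains is the bookkeeping of realising both Morita types in the homocyclic case and checking pairwise distinctness. The one mildly delicate point is distinguishing $\CO A_4$ from the principal block of $\CO A_5$, but this is already contained in the Klein four classification.
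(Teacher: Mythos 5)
Your argument is correct and is essentially the one the paper intends: the corollary is stated without proof because it follows immediately from Theorem \ref{homocyclictheorem} together with the facts recalled at the start of Section \ref{homocyclic} (nilpotency when $m\neq n$ via \cite{puig88}, and the Klein four classification of \cite{er82,lin94}). Your added bookkeeping — realising both Morita types via $\CO D$ and $\CO(D\rtimes C_3)$ and separating them by the number of simple modules — is exactly the routine verification the paper leaves implicit.
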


\begin{corollary}
Let $D$ be an abelian $2$-group of rank $2$. Then Donovan's conjecture holds for $D$.
\end{corollary}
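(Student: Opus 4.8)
The plan is to read the result off from the preceding corollary, which already lists all Morita equivalence types of blocks with abelian defect group of rank $2$; Donovan's conjecture then amounts to the observation that each of these lists is finite. So I would write $D \cong C_{2^m} \times C_{2^n}$ with $m \geq n \geq 1$ and run through the three cases.

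First, if $m \neq n$, then $\Aut(D)$ is a $2$-group, so any block $B$ with defect group $D$ has trivial inertial quotient and is therefore nilpotent; by Puig's theorem on nilpotent blocks \cite{puig88} such a $B$ is Morita equivalent to $\CO D$, giving a single Morita equivalence class. Second, if $m = n = 1$, then $D$ is a Klein $4$-group and, by the work of Erdmann \cite{er82} and Linckelmann \cite{lin94}, a block with defect group $D$ is Morita equivalent to one of $\CO D$, $\CO A_4$ or the principal block of $\CO A_5$, so there are three classes. Third, if $m = n \geq 2$, then $D$ is homocyclic of order at least $16$ and Theorem \ref{homocyclictheorem} gives that $B$ is Morita equivalent to $\CO D$ or to $\CO (D \rtimes C_3)$, so there are at most two classes.

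Combining the three cases, the number of Morita equivalence classes of blocks with defect group isomorphic to $D$ is finite, which is exactly Donovan's conjecture for $D$ (and, since the classification is obtained over $\CO$, it holds both over $\CO$ and, by reduction, over $k$). I do not expect any real obstacle at this stage: all the substantial input --- in particular the only appeal to the classification of finite simple groups --- is already contained in Theorem \ref{homocyclictheorem}, itself resting on Theorem \ref{ab-2-str} and the structural analysis of $2$-blocks of quasi-simple groups in Sections \ref{reductive groups}--\ref{main theorem}, together with the classical Klein $4$ theorem; the present statement is a purely bookkeeping consequence of those inputs.
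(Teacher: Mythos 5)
Your proposal is correct and is essentially the paper's own argument: the corollary is read off directly from the case analysis already assembled in Section 7 (nilpotency when $m \neq n$, the Erdmann--Linckelmann classification for the Klein four case, and Theorem \ref{homocyclictheorem} for the homocyclic case of order at least $16$), so finiteness of the Morita equivalence classes is immediate. No gap; this matches the paper's intended (bookkeeping) proof.
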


\section{Donovan's conjecture for $2$-blocks with elementary abelian defect groups}
\label{elabsection}

In this section,   by an $\ell $-block of a finite group $G$, we will mean  a  block of $kG$.
The following is immediate from~\cite[1.11]{du04} and Proposition \ref{kp}:

\begin{proposition}
\label{duvel}
Let $P$ be an abelian $\ell$-group for a prime $\ell$. In order to verify Donovan's conjecture for $P$, it suffices to verify that there are only a finite number of Morita equivalence classes of
quasi-primitive blocks $B$ with defect group $D \cong P$ of finite groups $G$ satisfying the following conditions:
\begin{enumerate}[(i)]
\item $F(G)=Z(G)$,
\item $O_{\ell'}(G) \leq [G,G]$,
\item $G= \langle D^g:g \in G \rangle$,
\item every component of $G$ is normal in $G$,
\item if $N \leq G$ is a component, then $N \cap D \neq Z(N) \cap D$,
\item if $N \lhd G$ and $B$ covers a nilpotent block of $N$, then $N \leq Z(G)$.
\end{enumerate}
\end{proposition}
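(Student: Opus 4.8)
The plan is to derive Proposition \ref{duvel} as a mostly formal consequence of D\"uvel's reduction \cite[1.11]{du04} together with the Külshammer--Puig result Proposition \ref{kp}. D\"uvel's theorem \cite[1.11]{du04} already reduces the verification of Donovan's conjecture for a fixed $\ell$-group $P$ to the quasi-primitive blocks with defect group isomorphic to $P$ in groups $G$ satisfying conditions (i)--(v): namely $F(G)=Z(G)$, $O_{\ell'}(G)\le [G,G]$, $G=\langle D^g : g\in G\rangle$, every component of $G$ is normal, and for each component $N$ one has $N\cap D\ne Z(N)\cap D$. So the only thing to add is condition (vi): that we may further assume $B$ covers no nilpotent block of any proper normal subgroup except those contained in $Z(G)$.

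First I would make precise what ``it suffices'' means here. Let $\mathcal{F}(P)$ be the class of blocks with defect group $\cong P$ occurring in \emph{arbitrary} finite groups, and $\mathcal{F}_0(P)$ the subclass of quasi-primitive blocks in groups satisfying (i)--(vi). By D\"uvel, every block in $\mathcal{F}(P)$ is Morita equivalent to one in the class satisfying (i)--(v), so it is enough to bound the number of Morita equivalence classes of quasi-primitive blocks with defect group $\cong P$ in groups satisfying (i)--(v). Thus I would argue: if $B$ is such a block, in a group $G$ satisfying (i)--(v), which does \emph{not} satisfy (vi), then there is a normal subgroup $N\lhd G$ with $N\not\le Z(G)$ such that $B$ covers a nilpotent block $b$ of $N$. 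I would choose such an $N$ (say of minimal order, or just any such $N$) and apply Proposition \ref{kp}: letting $H$ be the stabiliser in $G$ of $b$ and $D$ a defect group of $B$ containing $D\cap N$, Proposition \ref{kp} produces a finite group $L$ with $M\lhd L$, $M\cong D\cap N$, $L/M\cong H/N$, a subgroup $D_L\cong D$ of $L$ containing $M$, a central extension $\tilde L$ of $L$ by an $\ell'$-group, and a block $\tilde B$ of $\CO\tilde L$ Morita equivalent to $B$ with defect group $\tilde D\cong D\cong P$.

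The key point is then that $\tilde B$ lives in a group that is, in the relevant sense, ``smaller'': since $N\not\le Z(G)$ and $D$ is abelian, $M\cong D\cap N$ is a proper subgroup, so $|L|<|G|$ provided $|H/N|\le |G/N|$ and $|D\cap N|<|N|$; more robustly, since $N\not\le Z(G)$ while $b$ is nilpotent with defect $D\cap N\le Z(N)$... actually I would be careful here: nilpotency of $b$ forces $D\cap N\le Z(N)$ is not literally true, but $b$ being nilpotent and covered by $B$ does give control. The honest way to organise the induction is: $B$ is Morita equivalent to $\tilde B$, and $\tilde B$ (after passing through the D\"uvel reduction and Fong--Reynolds/Fong reductions again) is Morita equivalent to a block in a group with smaller order, or with $|G:Z(G)|$ smaller. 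Iterating this process and noting that each step strictly decreases a well-founded invariant (e.g.\ $(|G:Z(G)|,|G|)$ in lexicographic order, exactly as in the proof of Theorem \ref{homocyclictheorem}), I would conclude that every block in $\mathcal{F}(P)$ is Morita equivalent to a block in a group satisfying all of (i)--(vi). Hence the number of Morita equivalence classes in $\mathcal{F}(P)$ is bounded by the number in $\mathcal{F}_0(P)$, which is the assertion.

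The main obstacle I anticipate is bookkeeping rather than any deep idea: one must verify that after applying Proposition \ref{kp} and then re-running D\"uvel's reduction (which itself passes through Fong reductions), the chosen well-founded invariant genuinely decreases, and that the hypothesis ``$B$ covers a nilpotent block $b$ of $N$ with defect group $D\cap N$ and stabiliser $H$'' in Proposition \ref{kp} is actually met --- in particular that $b$ has defect group exactly $D\cap N$, which follows from general block theory (a block of a normal subgroup covered by $B$ has a defect group of the form $D\cap N$ up to conjugacy) provided $b$ is chosen $G$-conjugate appropriately. A secondary subtlety is that Proposition \ref{kp} only gives a central extension $\tilde L$ by an $\ell'$-group; but Donovan's conjecture is insensitive to central $\ell'$-extensions (a block of $\tilde L$ dominating a block of $L$ has the same defect groups and is Morita equivalent via the obvious idempotent truncation), so this causes no real trouble. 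I would spell this last point out in one sentence and otherwise keep the argument at the level of a reduction, as in the statement, since \cite{du04} does the heavy lifting.
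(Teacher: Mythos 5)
Your proposal is correct and follows exactly the route the paper intends: the paper states Proposition \ref{duvel} as an immediate consequence of D\"uvel's reduction \cite[1.11]{du04} (which yields conditions (i)--(v)) combined with Proposition \ref{kp} to enforce (vi), which is precisely the combination you spell out. Your extra bookkeeping (noting that $N\not\le Z(G)$ together with $F(G)=Z(G)$ forces $D\cap N<N$, so the Morita-equivalent block produced by Proposition \ref{kp} lives in a group that is smaller in a well-founded sense, central $\ell'$-extensions being harmless) is exactly the detail the paper leaves implicit.
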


Applying Proposition \ref{KK} we can reduce further to:

\begin{corollary}
\label{elabreduction}
Let $P$ be an  elementary abelian $\ell$-group for a prime $\ell$. In order to verify Donovan's conjecture for $P$, it suffices to verify that there are only a finite number of Morita equivalence classes of blocks $B$ with defect group $D \cong P$ of finite groups $G$ satisfying the following conditions:
\begin{enumerate}[(i)]
\item $G$ is a central product $G_1 * \cdots * G_t$ of quasi-simple groups;
\item the block $b_i$ of $G_i$ covered by $B$ is not nilpotent.
\end{enumerate}
\end{corollary}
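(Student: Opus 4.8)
The target is Corollary \ref{elabreduction}, deriving it from Proposition \ref{duvel} together with Proposition \ref{KK}. The plan is to start from a block $B$ of $G$ satisfying conditions (i)--(vi) of Proposition \ref{duvel} with defect group $D \cong P$ elementary abelian, and to massage $G$ using Proposition \ref{KK} so that what remains is exactly a central product of quasi-simple groups on which $B$ covers non-nilpotent blocks. By Proposition \ref{duvel}, it suffices to bound the number of Morita equivalence classes for such $G$, so throughout we are allowed to replace $(G,B)$ by a block of another group that is Morita equivalent to $B$ (and has defect group $\cong P$), provided this replacement lands us among groups of the special shape of Corollary \ref{elabreduction}.

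\textbf{Step 1: reduce to $G = E(G) Z(G)$.} By condition (i) of Proposition \ref{duvel}, $F(G) = Z(G)$; since $O_\ell(G) \le F(G) = Z(G)$ and $O_\ell(G)$ is an $\ell$-subgroup of the abelian defect-containing part, condition (i) forces $O_\ell(G) \le Z(G)$, and in fact, because $P$ is elementary abelian and $Z(G) \cap D$ is the "central" part, one sees $F^*(G) = E(G) Z(G)$ with $E(G) = L_1 * \cdots * L_t$ a central product of the components, each normal in $G$ by condition (iv). I would first argue that $G/F^*(G)$ acts on $F^*(G)$, and set $N := F^*(G) = E(G)Z(G) \lhd G$. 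The key point is that $G = N D$: indeed $G/N$ embeds in $\Out(E(G)) \times (\text{action on }Z(G))$; by condition (iii), $G$ is generated by conjugates of $D$, and since $C_G(F^*(G)) \le F^*(G)$, any $\ell'$-part of $G/N$ would contradict the structure — more carefully, one invokes that $D$ is abelian so $DN/N$ is an abelian (indeed elementary abelian) $\ell$-subgroup of $G/N$, and by condition (iii) plus coprime-action arguments on the $\ell'$-part of $\Out$, $G = ND$. (This kind of argument also appears implicitly in the proof of Theorem \ref{homocyclictheorem} and in \cite{du04}.)

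\textbf{Step 2: apply Proposition \ref{KK} to strip off $N = F^*(G)$.} Now $N \unlhd G$ with $G = ND$, $D$ elementary abelian. By condition (vi), $B$ does not cover a nilpotent block of $N$ unless $N \le Z(G)$; if $N \le Z(G)$ then $D = D \cap Z(G)$ is central, $B$ is nilpotent, and there is one Morita class — trivially finite — so we may assume $B$ covers a non-nilpotent (in particular non-trivial-defect) block $b$ of $N$. Replacing $G$ by the stabiliser of $b$ and $B$ by the Fong--Reynolds correspondent changes neither the Morita class nor the defect group, so we may assume $b$ is $G$-stable. Proposition \ref{KK} then yields an elementary abelian $\ell$-group $Q$ such that $B$ is Morita equivalent to a block $C$ of $k(N \times Q)$ with defect group $(D \cap N) \times Q \cong D$. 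Since $N = E(G)Z(G)$ with $Z(G)$ central, the block $b$ of $N$ factors (up to the usual central-quotient considerations, using that $Z(G)$ is central) through $E(G)$; absorbing $Z(G)$ and $Q$ into the abelian factor and writing $E(G) = G_1 * \cdots * G_t$, we recover $C$ as a block of a central product of quasi-simple groups (the $G_i$) times an abelian group. The abelian direct factor can be folded into one of the $G_i$ or handled by Proposition \ref{KK} once more; in any case the block $b_i$ of $G_i$ covered by $C$ is non-nilpotent, because a nilpotent $b_i$ would make $b$ (which has $D \cap N$ as defect group) cover a nilpotent block on the component $G_i$, and transitivity of $G$ on the components together with condition (v) would then force $D \cap G_i = Z(G_i) \cap D$, contradicting non-nilpotence of $b$. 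This gives conditions (i) and (ii) of Corollary \ref{elabreduction}.

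\textbf{Main obstacle.} The routine parts are the Clifford-theoretic bookkeeping. The genuinely delicate step is Step 2: ensuring that after applying Proposition \ref{KK}, the block $C$ really does sit on a \emph{central product of quasi-simple groups} (times an abelian group that can be absorbed), rather than on $N \times Q$ with $N$ still carrying the central torus $Z(G)$ in an inconvenient way. One must check that passing to $N \times Q$ and then identifying $N$ with (a quotient/central extension of) $E(G)$ does not spoil the defect group being $\cong P$ nor the non-nilpotency of the component blocks; here the hypotheses that $P$ is \emph{elementary} abelian (so all the $\ell$-groups in sight split as direct products) and conditions (v)--(vi) of Proposition \ref{duvel} are exactly what make the argument close. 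I would present this carefully, noting that Proposition \ref{KK} is stated for $G$-stable blocks and requires $G = ND$, both of which Step 1 arranges.
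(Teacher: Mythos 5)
Your overall strategy (Proposition \ref{duvel} plus Proposition \ref{KK} plus the Schreier conjecture) is the same as the paper's, but Step 1 contains a genuine gap. You want to apply Proposition \ref{KK} once, with $N=F^*(G)=E(G)Z(G)$, and for that you assert $G=ND$. Since $D$ is an $\ell$-group, $G=ND$ is literally equivalent to $G/F^*(G)$ being an $\ell$-group, and this does not follow from the hypotheses you cite. Condition (iii) of Proposition \ref{duvel} ($G=\langle D^g : g\in G\rangle$) only yields $O^{\ell'}(G)=G$, i.e.\ that the solvable group $G/F^*(G)$ has no non-trivial $\ell'$-quotient; a solvable group generated by conjugates of an $\ell$-subgroup need not be an $\ell$-group (for $\ell=2$, think of $S_3$ generated by its transpositions). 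The phrase ``coprime-action arguments on the $\ell'$-part of $\Out$'' does not supply the missing implication, so the hypothesis $G=ND$ of Proposition \ref{KK} is not available for your choice of $N$.

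The paper avoids exactly this point by handling the solvable quotient $G/F^*(G)$ in two stages: condition (iii) kills $\ell'$-quotients, and Proposition \ref{KK} is applied only to normal subgroups $M$ of $\ell$-power index (where $G=MD$ holds automatically, since $DM/M$ is then a Sylow $\ell$-subgroup of the $\ell$-group $G/M$, and quasi-primitivity gives stability of the covered block), allowing one to assume $O^{\ell}(G)=G$; then a solvable group $G/F^*(G)$ with $O^{\ell}(G)=O^{\ell'}(G)=G$ must be trivial, so $G=F^*(G)$ and the conditions of Proposition \ref{duvel} (in particular (vi)) give the two properties in the corollary. If you replace your Step 1 by this iterated stripping of $\ell$-power quotients, the rest of your Step 2 (identifying the components, absorbing the abelian central part, and using condition (vi) to see the component blocks are non-nilpotent) goes through essentially as you wrote it.
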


\begin{proof}
It suffices to consider groups $G$ of the form given in Proposition \ref{duvel}. If $G$ has this form, then by the Schreier conjecture there is $N \lhd G$ with $N$ a central product of quasi-simple groups and $G/N$ solvable. By condition (iii) in Proposition \ref{duvel} we have $O^{\ell'}(G)=G$. By Proposition \ref{KK} we may assume that $O^{\ell}(G)=G$. Hence, since $G/N$ is solvable, we may assume that $G=N$ and the result follows from the conditions listed in Proposition \ref{duvel}.
\end{proof}

\begin{theorem}
\label{elabDonovan}
Donovan's conjecture holds for elementary abelian $2$-groups.
\end{theorem}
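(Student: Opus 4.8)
The plan is to combine Corollary~\ref{elabreduction} with the structural classification of blocks of quasi-simple groups in Theorem~\ref{ab-2-str}. By Corollary~\ref{elabreduction}, it suffices to bound the number of Morita equivalence classes of blocks $B$ of $kG$ with elementary abelian defect group $D$, where $G=G_1*\cdots*G_t$ is a central product of quasi-simple groups and each $G_i$ contributes a non-nilpotent covered block $b_i$. The first step is to reduce to a single factor: using Proposition~\ref{KK} (applied inside $G$ with $N=G_1\times\cdots$ suitably arranged, or rather factor-by-factor via Lemma~\ref{bimodswithZinkernel}), the block $B$ is Morita equivalent to a tensor product $kQ\otimes_k b_1\otimes_k\cdots\otimes_k b_t$ over an elementary abelian complement $Q$, so it is enough to bound the Morita classes of the individual non-nilpotent blocks $b_i$ of quasi-simple groups with elementary abelian defect group, together with the (bounded, since $|D|$ varies but each $b_i$ has elementary abelian defect dividing $D$) combinatorial data of how the factors assemble. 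Since $D$ is a fixed elementary abelian $2$-group, $t$ and the defect groups $D\cap G_i$ range over a finite set, so the whole problem collapses to: \emph{there are only finitely many Morita equivalence classes of non-nilpotent $2$-blocks of quasi-simple groups with a fixed elementary abelian defect group.}

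For that, I would run through the cases of Theorem~\ref{ab-2-str} for a quasi-simple $G$ with elementary abelian defect group $P$. Case (iii) (nilpotent-covered) is excluded since we assumed $b_i$ non-nilpotent — wait, more precisely a nilpotent-covered block is by Proposition~\ref{puig} Morita equivalent to its Brauer correspondent in $\CO N_G(P)$, which for abelian $P$ is $\CO(P\rtimes E)$ for an $\ell'$-group $E\le\Aut(P)$; since $\Aut(P)$ is finite there are only finitely many such, so even if such blocks slip through they contribute finitely many classes. Cases (i) and (ii) give finitely many groups ($A_1(2^a)$, $\,{}^2G_2(q)$, $J_1$, $Co_3$) — for $A_1(2^a)$ the principal block is Morita equivalent to $\CO(C_2^a\rtimes\text{(torus)})$-type structure, but in characteristic $2$ one must be careful; however the relevant point is that by Lemma~\ref{Donovan_for_Ree} the ${}^2G_2(q)$ principal blocks fall into finitely many classes, and $J_1$, $Co_3$ are single groups; the $A_1(2^a)=\mathrm{SL}_2(2^a)$ principal blocks are handled by Erdmann's classification of tame blocks or directly since they are Morita equivalent to $k[D\rtimes F]$ where $F$ is cyclic of order $2^a-1$ acting freely — again finitely many as $|D|$ is bounded. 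Case (iv) reduces $B$ to a block of $L_0\times L_1$ with $L_0$ abelian and the $L_1$-block having Klein four defect groups; such blocks were classified by Erdmann/Craven-Eaton-Kessar-Linckelmann into three Morita classes, and $L_0$ contributes only its abelian block algebra, so finitely many classes here too.

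The main obstacle I expect is the bookkeeping around the difference between blocks over $\CO$ and blocks over $k$, and the fact that Donovan's conjecture for elementary abelian $2$-groups is being asserted over $k$ only (as flagged in the introduction, because of reliance on \cite{ku95} for the tame/Klein-four classification). So I would be careful to phrase everything in terms of $kG$-blocks throughout, invoking the $k$-version of Puig's theorem and the $k$-classification of Klein four blocks. A secondary subtlety is case (iv) when $D$ itself is elementary abelian: one needs $P=P_0\times P_1$ with $P_1\cong C_2\times C_2$ and $P_0$ elementary abelian contained in $L_0$, and to observe that the block of $kL_0$ with defect group $P_0$ central is just a matrix algebra over $kP_0$ (a single Morita class), so $B\sim kP_0\otimes_k(\text{one of three Klein-four blocks})$ — finitely many. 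Assembling: each of the finitely many cases yields finitely many Morita classes, and since the decomposition of $D$ into the pieces $D\cap G_i$ and $Q$ ranges over a finite set, the total is finite. I would then conclude by citing Corollary~\ref{elabreduction} that Donovan's conjecture holds for elementary abelian $2$-groups.
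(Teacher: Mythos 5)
Your overall scaffolding (Corollary~\ref{elabreduction} plus the case analysis of Theorem~\ref{ab-2-str}, with Lemma~\ref{Donovan_for_Ree}, Puig's Proposition~\ref{puig} and the Klein four classification supplying finiteness in the individual cases) matches the paper, but there is a genuine gap at the assembly step. Corollary~\ref{elabreduction} leaves you with a block of a \emph{central product} $G=(G_1\times\cdots\times G_t)/Z$, where $Z$ is a (possibly diagonally embedded) central $2$-subgroup, and your claim that the problem ``collapses'' to bounding Morita classes of the non-nilpotent blocks $b_i$ of the quasi-simple factors, up to bounded ``combinatorial data of how the factors assemble,'' is not justified. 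Knowing the Morita class of each $b_i$ does not determine, even up to finitely many possibilities, the Morita class of the block of $(G_1\times\cdots\times G_t)/Z$: Morita equivalences do not in general pass through central quotients, so one cannot argue factor-by-factor and then divide by $Z$. Proposition~\ref{KK} does not help here either; it concerns extensions $G=ND$ by a $p$-group on top, not quotients by central $2$-subgroups, and in the paper it is only used inside Corollary~\ref{elabreduction} to remove the solvable quotient.

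What the paper actually does, and what is missing from your sketch, is twofold. First, in case (iv) of Theorem~\ref{ab-2-str} the Morita equivalence to a block of $A_i\times L_i$ is realised by a bimodule on which the central $2$-subgroup $Z_i$ acts the same on both sides (the Remark after Theorem~\ref{ab-2-str}); this is exactly what allows Lemma~\ref{moritaquotient} (in its product form, Lemma~\ref{bimodswithZinkernel}) to transport the equivalence down to $G=E/Z$, replacing $B$ by a block of $H/Z$ with $H=H_1\times\cdots\times H_t$. Second, and crucially, after relabelling one checks that every factor whose block is not nilpotent-covered has trivial $2$-part in its centre (the Klein four factors have $O_2(Z(H_i))=1$, and the groups in cases (i)--(ii) have centres of odd order, verified in the Atlas), so the whole of $Z$ sits inside the product $U$ of the nilpotent-covered factors. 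Then one does \emph{not} apply Puig's theorem factorwise: a tensor product of nilpotent-covered blocks is nilpotent-covered, this property survives the quotient by the central $2$-subgroup $Z$ by Lemma~\ref{nilcoveredandquotients}, and Proposition~\ref{puig} is applied to the quotient block $C_{U/Z}$ as a whole, reducing it to a block with normal elementary abelian defect group. Without this localisation of $Z$ inside the nilpotent-covered part and the stability of nilpotent-coveredness under central quotients, your argument never controls the central product, which is the technical heart of the proof; the rest of your case analysis (Lemma~\ref{Donovan_for_Ree} for the Ree groups, boundedness for $SL_2(2^a)$, $J_1$, $Co_3$, and the Klein four tensor factors) is in line with the paper.
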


\begin{proof}
Let $P$ be an elementary abelian $2$-group.

We may assume initially that we have a block $B$ of a group $G$ as in Corollary \ref{elabreduction}, so that $G$ is a central product $G_1 * \cdots * G_t$ of quasi-simple groups. By taking an appropriate central extension of $G$ by a group of odd order, we may assume that $G=E/Z$, where $E \cong G_1 \times \cdots \times G_t$ and $Z \leq Z(G)$ is a $2$-group. Write $Z_i=O_2(Z(G_i))$.

Let $B_i$ be the (unique) block of $G_i$ covered by $B$. Note that $B_i$ has elementary abelian defect groups, and no $B_i$ is nilpotent. Let $D$ be a defect group for $B$ (with $D \cong P$). Then $D_i:=D \cap G_i$ is a defect group for $B_i$. Since $B_i$ is not nilpotent, $|D_i/Z_i| >2$,

Let $B_E$ be the unique block of $E$ corresponding to $B$. Then $B_E \cong B_1 \otimes \cdots \otimes B_t$ and $B_E$ has defect group $D_E \cong D_1 \times \cdots \times D_t$ (in particular $D_E$ is elementary abelian), with $D_E/Z \cong D$.

Then, $B_i$ and $G_i$ belong to one (or more) of the classes (i)--(iv) in Theorem \ref{ab-2-str}. We will define a new group $H$ containing a copy of $Z$ and a block $C$ of $H$ such that $C$ is Morita equivalent to $B_E$ via a bimodule satisfying the conditions in Lemma \ref{moritaquotient}.

Suppose first that $G_i$ satisfies (i), (ii) or (iii) of Theorem \ref{ab-2-str}. Then write $H_i=G_i$ and $C_i=B_i$.

Suppose $B_i$ and $G_i$ are as in (iv) of Theorem \ref{ab-2-str}. Then there is a Morita equivalence with a block $C_i$ of a finite group $H_i$ containing $Z_i$, such that $H_i \cong A_i \times L_i$ where $A_i$ is abelian and $C_i$ covers a block of $L_i$ with Klein $4$-defect groups. Further this Morita equivalence is realised by a bimodule $M_i$ such that $z_im_i=m_iz_i$ for all $z_i \in Z_i$ and all $m_i \in M_i$.

in case (iv) of Theorem \ref{ab-2-str}.

We now have a Morita equivalence satisfying the conditions of Lemma \ref{moritaquotient} from $B_E$ to a block $C$ of the direct product $H=H_1 \times \cdots \times H_t$, where $C$ covers the block $C_i$ of $H_i$, $Z \leq Z(H)$, and $C_i$ is as in (i)--(iv). By Lemma \ref{moritaquotient} this gives a Morita equivalence between $B$ and the unique block $C_{H/Z}$ of $H/Z$ corresponding to $C$. Thus it suffices to assume that $H=E$ and $B_E=C$.

Relabelling as necessary to account for the direct product in case (iv), and noting that blocks of abelian groups are necessarily nilpotent, we may write $H$ as a direct product of groups $H_i$, with block $C_i$ of $H_i$ covered by $C$ satisfying one or more of the following:
\begin{enumerate}[(a)]
\item $C_i$ is a nilpotent-covered block;
\item $C_i$ has defect groups $C_2 \times C_2$ and $O_2(Z(H_i))=1$;
\item $H_i$ and $C_i$ are as in (i) or (ii) of Theorem \ref{ab-2-str}.
\end{enumerate}

Note that we may assume $O_2(Z(H_i))=1$ in case (b), since otherwise $C_i$ is nilpotent and so belongs to case (a). By checking in~\cite{atlas} we see that in case (c) $Z(H_i)$ has odd order.

It follows that $Z$ is contained in the direct product of factors of type (a), i.e., we may express $G$ as a direct product $(U/Z) \times V \times W$, where $U$ is a direct product of groups satisfying (a), $V$ is a direct product of groups satisfying (b), and $W$ is a direct product of groups satisfying (c). Further $B \cong C_{U/Z} \otimes C_V \otimes C_W$, where $C_U$, $C_V$, $C_W$ are the blocks of $U$, $V$, $W$ resp.\ covered by $C$, and $C_{U/Z}$ is the unique block of $U/Z$ corresponding to $C_U$.

Now a tensor product of nilpotent-covered blocks is nilpotent-covered, and by Lemma \ref{nilcoveredandquotients} a quotient of such a block by a central $2$-subgroup is also nilpotent-covered.
Hence $C_{U/Z}$ is nilpotent-covered. So by Proposition \ref{puig}, $C_{U/Z}$ is Morita equivalent to a block with normal elementary abelian defect group, of which there are only finitely many possibilities for Morita equivalence classes.

$C_V$ is a tensor product of a bounded number of blocks with Klein $4$-defect groups, and so there are only a finite number of possibilities for the Morita equivalence class of $C_V$.

By Lemma \ref{Donovan_for_Ree} there are only a finite number of Morita equivalence classes of blocks of groups satisfying (i) and (ii) of Theorem \ref{ab-2-str} with elementary abelian defect groups of order at most $|P|$, and of course the number of factors in $W$ is bounded in terms of $|P|$, hence there are only finitely many possibilities for the Morita equivalence class of $C_W$.

Since $B \cong C_{U/Z} \otimes C_V \otimes C_W$, the result follows.
\end{proof}

\section{On the weak Donovan conjecture for abelian $2$-blocks}
\label{weakdonovansection}

A weak version of  Donovan's conjecture is the following.

\begin{conjecture}  Let $D$ be a finite $\ell$-group.  There is a bound   on the Cartan invariants of blocks of finite groups with defect group $D$   which depends only on $D$.
\end{conjecture}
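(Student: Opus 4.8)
The plan is to attack this \emph{via} the standard two-step strategy for Donovan-type statements: reduce to quasi-simple groups, then use the classification of finite simple groups. It should be said at the outset that the conjecture as stated is Brauer's Problem~22 in its ``weak Donovan'' form and is open for general $D$; what follows is therefore a programme, of which the present paper carries out in full exactly one case, namely $\ell=2$ with $D$ abelian.

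\textbf{Step 1 (reduction to quasi-simple groups).} First I would invoke D\"uvel's reduction~\cite{du04}, combined with Proposition~\ref{kp}, in the spirit of Proposition~\ref{duvel} and Corollary~\ref{elabreduction}: to bound the Cartan invariants of blocks with defect group $D$ it suffices to treat blocks $B$ with defect group $D$ of a finite group $G$ which is, up to a central extension by an $\ell'$-group and a central quotient, a direct product $G_1\times\cdots\times G_t$ of quasi-simple groups, the block $b_i$ of $G_i$ covered by $B$ being non-nilpotent. A nilpotent constituent contributes a block Morita equivalent to $\CO((D\cap N)\rtimes E)$, whose Cartan entries are at most $|D|$; the number $t$ of non-nilpotent quasi-simple constituents is bounded in terms of $|D|$, since each carries a defect group of order $>1$ dividing $|D|$; and the Cartan matrix of the resulting (twisted) tensor product has entries equal to products of the Cartan entries of the factors. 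Hence it is enough to bound, for every quasi-simple group $X$, the Cartan invariants of blocks of $X$ whose defect group has order dividing $|D|$.

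\textbf{Step 2 (quasi-simple groups).} Here I would divide into cases according to the isomorphism type of $X/Z(X)$, using CFSG: sporadic groups and other ``small'' cases are handled by explicit computation; for covers of alternating groups one reduces to the known block theory of symmetric groups; for groups of Lie type in the defining characteristic the defect group is a full Sylow subgroup and one needs only a bound on the Cartan invariants of $\CO X$. The genuinely difficult case --- and the main obstacle --- is groups of Lie type in non-defining characteristic: using the Bonnaf\'e--Rouquier Morita equivalence~\cite{BonRou} (and its behaviour under central quotients, Proposition~\ref{bonnrou-cent}) together with Jordan decomposition of blocks, one reduces to quasi-isolated and ultimately to unipotent blocks, but a \emph{uniform} bound on the Cartan invariants of unipotent blocks of the exceptional groups for arbitrary $D$ is not currently available, which is precisely why Problem~22 is still open in this generality.

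\textbf{Step 3 (the abelian $2$-case, as realised in this paper).} When $D$ is abelian and $\ell=2$ the obstacle in Step~2 disappears thanks to the structure theorem, Theorem~\ref{ab-2-str}: a block of a quasi-simple group with abelian $2$-defect group is either one of finitely many exceptional blocks (Cartan invariants bounded outright); or nilpotent-covered, hence by Proposition~\ref{puig} Morita equivalent to its Brauer correspondent in $N_G(D)$, a block with normal defect group whose Cartan entries are at most $|D|$; or Morita equivalent to a tensor product of a nilpotent block and a block with Klein $4$ defect groups, all of whose Cartan invariants are bounded in terms of $|D|$. Feeding this back through Step~1 yields the desired bound for every abelian $2$-group $D$. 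For abelian $D$ at odd primes, and for non-abelian $D$, one would substitute the corresponding classification input in Step~2; the missing ingredient remains a uniform Cartan bound for cross-characteristic blocks of groups of Lie type, and I expect that --- not the reduction machinery --- to be where the real work lies.
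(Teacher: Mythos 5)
The statement you were asked to prove is recorded in the paper as a \emph{conjecture} --- it is Brauer's Problem~22, the weak form of Donovan's conjecture --- and the paper offers no proof of it in general. What the paper actually proves is Theorem~\ref{weak-donovan}, the case $\ell=2$ with $D$ abelian, and its proof is two lines: cite D\"uvel's reduction of Problem~22 to quasi-simple groups (\cite[Theorem 3.2]{du04}, with the remark that those reductions stay within the class of abelian defect groups), cite \cite[Theorem 3.9]{lm80} for the Ree groups, and read the bound off from Theorem~\ref{ab-2-str}. You correctly identify that the general statement is open, and your Step~3 is essentially the paper's argument for the provable case; your Steps~1 and~2 are a reasonable, if more elaborate, reconstruction of what the paper treats as a black box via D\"uvel. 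So on the part of the statement that admits a proof at all, your route coincides with the paper's.

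There is one concrete gap in your Step~3. Case (i) of Theorem~\ref{ab-2-str} is \emph{not} ``finitely many exceptional blocks'': the Ree groups $\,^2G_2(q)$ form an infinite family whose principal $2$-blocks all have defect group $C_2\times C_2\times C_2$, so whenever $8$ divides $|D|$ infinitely many blocks fall under this case and a uniform bound on their Cartan invariants has to be supplied separately. The paper does this by invoking Landrock--Michler \cite[Theorem 3.9]{lm80} (the Cartan invariants of these principal blocks are at most $8$); Lemma~\ref{Donovan_for_Ree} would also suffice, since it bounds the number of Morita classes among them. Without one of these inputs your argument fails at exactly this family. The remaining cases are as you describe: nilpotent-covered blocks are disposed of by Proposition~\ref{puig}, and the tensor-product case by the known Cartan matrices of Klein-four blocks together with the boundedness of Cartan entries of blocks with normal abelian defect group.
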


\begin{theorem} \label {weak-donovan} The weak Donovan conjecture holds for $2$-blocks with abelian defect groups.
\end{theorem}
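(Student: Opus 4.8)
The plan is to reduce, by a sequence of now-standard steps, to blocks of quasi-simple groups, and then to invoke the structure theorem (Theorem \ref{ab-2-str}) together with the observation that each of the four possibilities in that theorem has Cartan invariants bounded in terms of the defect group. First I would recall that, by D\"uvel's reduction (\cite{du04}), together with Proposition \ref{kp} and the Fong reductions — exactly as packaged in Proposition \ref{duvel} — it suffices to bound the Cartan invariants of blocks $B$ with abelian defect group $D$ of finite groups $G$ in which $F(G)=Z(G)$, $O_{\ell'}(G)\le[G,G]$, $G=\langle D^g:g\in G\rangle$, every component of $G$ is normal, each component $N$ satisfies $N\cap D\ne Z(N)\cap D$, and no proper normal subgroup covered non-trivially carries a nilpotent block covered by $B$. (One must check that this reduction, which in Section \ref{elabsection} is quoted for $k$, applies verbatim here: the Cartan matrix of a block over $\CO$ agrees with that of the corresponding block over $k$, and passage to central extensions by $\ell'$-groups, the two Fong reductions, and Proposition \ref{kp} all preserve the Cartan matrix up to the data recorded there; Proposition \ref{kp} in particular replaces $B$ by a Morita equivalent block $\tilde B$, so its Cartan matrix is unchanged.)

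Next I would use the Schreier conjecture and condition (iii) to write such a $G$ as $E/Z$ with $E=G_1\times\cdots\times G_t$ a direct product of quasi-simple groups and $Z\le Z(G)$, and reduce the bounding problem for $B$ to the corresponding problem for the block $B_E\cong B_1\otimes\cdots\otimes B_t$ of $E$, since the Cartan matrix of $B$ is obtained from that of $B_E$ by folding along $Z$ and hence its entries are bounded by (a fixed multiple of) those of $B_E$; equivalently, one may bound $t$ in terms of $|D|$ and bound each Cartan matrix $C_{B_i}$ separately, the Cartan matrix of a tensor product being the Kronecker product. Thus it remains to bound the Cartan invariants of a non-nilpotent $2$-block $B_i$ with abelian defect group $D_i$ of a quasi-simple group $G_i$, where $|D_i|\le|D|$. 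Here I would invoke Theorem \ref{ab-2-str}. In cases (i) and (ii) the defect group is elementary abelian of order $\le 8$, so the Cartan invariants are bounded outright (there are only finitely many such blocks, or one cites \cite{lm80}). In case (iii) the block is nilpotent, hence $l(B_i)=1$ and the single Cartan invariant is $|D_i|$. In case (iv), $B_i$ is Morita equivalent to a block $C$ of $L_0\times L_1$ whose Cartan matrix is the Kronecker product of that of a nilpotent block of $L_0$ — a single entry equal to $|D_0|$ — with that of a block with Klein four defect groups, whose Cartan matrix is one of the finitely many explicitly known possibilities (\cite{er82}); so again the entries are bounded by a function of $|D_i|$.

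The main obstacle, and the place requiring the most care, is the bookkeeping in the two reduction steps: one must verify that each reduction (D\"uvel's theorem, the Fong reductions, Proposition \ref{kp}, passage to $E/Z$ and to the tensor factors) either preserves the Cartan matrix exactly or changes it in a way controlled by $|D|$, and in particular that the number $t$ of quasi-simple factors is bounded in terms of $|D|$ — which follows because each $D_i=D\cap G_i$ is non-trivial (indeed $|D_i/Z_i|>2$, as $B_i$ is non-nilpotent with abelian defect), so $t\le\log_2|D|$. Granting this, the argument is complete: every $2$-block with abelian defect group $D$ has Cartan invariants bounded by a function of $|D|$ alone.
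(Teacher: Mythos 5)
Your endgame---bounding the Cartan invariants case by case through Theorem \ref{ab-2-str}, with \cite{lm80} covering the Ree groups---is exactly what the paper does. The problem is your reduction to quasi-simple groups. The paper does not route through Proposition \ref{duvel} at all: it cites \cite[Theorem 3.2]{du04}, which is D\"uvel's reduction of the Cartan-bound (weak Donovan) conjecture \emph{itself} to quasi-simple groups, so no bookkeeping is required beyond the remark that D\"uvel's reductions stay within the realm of abelian defect groups. You instead invoke Proposition \ref{duvel}, which is the reduction of the Morita-finiteness statement, and assert that it ``applies verbatim'' once Cartan matrices are tracked through each step; that assertion is essentially the content of D\"uvel's Theorem 3.2 and cannot be extracted for free from Proposition \ref{duvel} as stated, so as written this part of your argument is a promissory note rather than a proof.

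More concretely, your second step fails as written. From the conditions of Proposition \ref{duvel} you claim, via the Schreier conjecture and condition (iii), that $G=E/Z$ with $E$ a direct product of quasi-simple groups and $Z\le Z(G)$. But condition (iii), $G=\langle D^g: g\in G\rangle$, only gives $O^{2'}(G)=G$, i.e.\ it kills odd-order quotients; it does not kill $2$-quotients, so $G$ may still have a non-trivial solvable group on top of $Z(G)E(G)$ (for instance a quotient of order $2$ generated by the image of $D$). In the elementary abelian case the paper removes the $2$-part using Proposition \ref{KK} (see the proof of Corollary \ref{elabreduction}), but Proposition \ref{KK} requires elementary abelian defect groups and is a statement over $k$, so it is unavailable for a general abelian $D$; and handling such index-$2$ or more general solvable extensions is genuinely non-trivial---compare the effort spent on exactly this point in the proofs of Theorem \ref{homocyclictheorem} and Theorem \ref{eldef16}. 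So you would either need to supply an argument bounding the Cartan invariants of $B$ in terms of those of the covered block of $Z(G)E(G)$ with all relevant indices controlled by $|D|$, or, as the paper does, simply quote \cite[Theorem 3.2]{du04} and dispense with the intermediate group-theoretic reduction. (A minor slip besides: in case (i) of Theorem \ref{ab-2-str} the defect group need not have order at most $8$---for $A_1(2^a)$ it has order $2^a$---though your parenthetical remark that for fixed $D$ there are only finitely many such groups, together with \cite{lm80} for the Ree groups, repairs this.)
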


\begin{proof}  By \cite[Theorem 3.2]{du04}, it suffices to prove that  the weak Donovan conjecture holds for all abelian  defect $2$-blocks  of quasi-simple groups  (note that  the reductions employed in    \cite{du04}  all work in the realm of  abelian defect groups).   By \cite[Theorem 3.9]{lm80} the Cartan  invariants  of  the principal $2$-blocks  of  the Ree groups  are at most  $8$.  The  proof follows by
Theorem \ref{ab-2-str}.
\end{proof}


\section{On numerical invariants for $2$-blocks with elementary abelian defect groups of order $16$}
\label{elabinvariants}

It is shown in~\cite{ks13} that if a block $B$ has elementary abelian defect groups of order $16$, then $k(B)$ is either $8$ or $16$, and that in all but one case, $k(B)$, $k_0(B)$ and $l(B)$ are determined given the action of the inertial quotient on $D$ and certain cocycles. It remains to show that if the inertial quotient has order $15$, then $k(B)=16$ (and consequently $l(B)=15$, $k_0(B)=16$).

\begin{lemma}
\label{inertialquotients}
Let $B$ be a block of a finite group $G$ with elementary abelian defect group $D$ of order $16$ and inertial index $15$. Let $N \lhd G$ have odd prime index, and let $b$ be a $G$-stable block of $N$ covered by $B$. Let $B_D$ be a block of $C_G(D)$ with $(B_D)^G=B$ and $b_D$ be a block of $C_N(D)$ with $(b_D)^G=B$.
\begin{enumerate}[(i)]
\item If $C_G(D) \leq N$, then $N_N(D,b_D)/C_N(D) \leq N_G(D,B_D)/C_G(D)$ and $|N_G(D,B_D):N_N(D,b_D)|$ divides $|G:N|$.
\item If $C_G(D) \not\leq N$, then $N_N(D,b_D)/C_N(D) \cong N_G(D,B_D)/C_G(D)$.
\end{enumerate}
Either $B$ is the unique block of $G$ covering $b$, or there are $|G:N|$ blocks covering $b$.
\end{lemma}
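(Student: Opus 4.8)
The plan is to exploit that $|G:N|$ is an \emph{odd} prime. Since $D$ is a $2$-group, $DN/N=1$, so $D\le N$; and since $b$ is $G$-stable with $|G:N|$ prime to $\ell=2$, $D$ is also a defect group of $b$. Hence $C_N(D)=C_G(D)\cap N$ and $N_N(D)=N_G(D)\cap N$, so both $[C_G(D):C_N(D)]$ and $[N_G(D):N_N(D)]$ divide $|G:N|$. I would then fix the roots compatibly: a root $b_D$ of $b$ in $C_N(D)$ and a root $B_D$ of $B$ in $C_G(D)$ (blocks with $(b_D)^N=b$ and $(B_D)^G=B$, both with defect group $D$) chosen so that $B_D$ covers $b_D$, which is possible by the standard compatibility of Brauer pairs of $b$ with those of a block covering it. Since $D\le C_N(D)\lhd C_G(D)$ and $B_D$ has defect group $D$, the standard fact that a block whose defect group lies in a normal subgroup covers a unique block of that subgroup shows $b_D$ is the \emph{only} block of $C_N(D)$ covered by $B_D$. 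In particular any automorphism of $C_G(D)$ fixing $B_D$ fixes $b_D$; as every $c\in C_G(D)$ acts on $C_G(D)$ by an inner automorphism, each such $c$ fixes both $B_D$ and $b_D$.

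For (i), if $C_G(D)\le N$ then $C_G(D)=C_N(D)$, so $B_D$ is already a block of $C_N(D)$ and $b_D=B_D$. Hence $N_N(D,b_D)=N_N(D,B_D)=N_G(D,B_D)\cap N$, so $N_N(D,b_D)/C_N(D)=(N_G(D,B_D)\cap N)/C_G(D)$ is a subgroup of $N_G(D,B_D)/C_G(D)$, and the index of the inclusion is $[N_G(D,B_D):N_G(D,B_D)\cap N]=[N_G(D,B_D)N:N]$, which divides $|G:N|$.

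For (ii), if $C_G(D)\not\le N$ then $C_G(D)N/N$ is a nontrivial subgroup of the group $G/N$ of prime order, so $C_G(D)N=G$, and by Dedekind's modular law $N_G(D)=C_G(D)N_N(D)$. Writing $g=cn$ with $c\in C_G(D)$, $n\in N_N(D)$ and using that $c$ fixes $B_D$, we get $B_D^g=B_D^n$, so $N_G(D,B_D)=C_G(D)\bigl(N_N(D)\cap N_G(D,B_D)\bigr)$ and therefore $E_B:=N_G(D,B_D)/C_G(D)\cong\bigl(N_N(D)\cap N_G(D,B_D)\bigr)/C_N(D)$. Since $B_D$ covers only $b_D$, any element fixing $B_D$ fixes $b_D$, so $N_N(D)\cap N_G(D,B_D)\le N_N(D,b_D)$ and hence $E_B$ embeds into $E_b:=N_N(D,b_D)/C_N(D)$. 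For the reverse embedding I would use the $N$-equivariance of the Brauer-pair correspondence (so that the maximal $b$-pair $(D,b_D)$ is sent to the maximal $B$-pair $(D,B_D)$): this gives $N_N(D,b_D)\le N_G(D,B_D)$, and since $N_N(D,b_D)\cap C_G(D)=C_N(D)$ it follows that $E_b$ embeds into $N_G(D,B_D)/C_G(D)=E_B$. Two finite groups each embedded in the other are isomorphic, so $E_B\cong E_b$. I expect this reverse embedding $E_b\le E_B$ to be the main technical point: the embedding $E_B\hookrightarrow E_b$ is immediate once one knows $B_D$ covers a unique block, but the opposite containment needs the compatibility of the $b$- and $B$-Brauer-pair structures under covering, equivalently the fact that here $B_D$ is the unique root of $B$ lying over $b_D$. (As a fallback adapted to the lemma's hypotheses one could instead use that $E_B$ has order $15$ and that the only $2'$-subgroup of $\Aut(D)\cong\GL_4(2)\cong A_8$ of order divisible by $15$ has order $15$, forcing $E_b=E_B$.)

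For the last assertion, since $b$ is $G$-stable it is a block idempotent of $\CO N$ lying in $Z(\CO G)$, so $\CO Gb$ is the sum of exactly the blocks of $G$ covering $b$ and is a crossed product of the indecomposable algebra $\CO Nb$ with the cyclic group $G/N$ of prime order $|G:N|$, which is invertible in $\CO$ since $|G:N|$ is odd. By the standard Clifford theory of blocks over a normal subgroup of prime index, such a crossed product is either indecomposable or splits into exactly $|G:N|$ blocks, giving the stated dichotomy.
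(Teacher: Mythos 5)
There is a genuine gap in your argument for (ii). The ``standard fact'' you invoke --- that a block whose defect group lies in a normal subgroup covers a unique block of that subgroup --- is false. For example, with $\ell=2$ let $H=(C_7\rtimes C_3)\times D$ with $D$ a $2$-group central in $H$ and $N_0=C_7\times D$ of index $3$: each non-principal block of $H$ has defect group $D\leq N_0$ (indeed central), yet covers three blocks of $N_0$. This is exactly the shape of $C_N(D)\lhd C_G(D)$ in the lemma, so the possibility that $B_D$ covers $|G:N|$ distinct $C_G(D)$-conjugates of $b_D$ (equivalently $C_G(D)\not\leq N_G(D,b_D)$) cannot be dismissed a priori. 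It is precisely the case the paper has to exclude, and it does so by a real argument: if $B_D$ covered $|G:N|$ conjugates of $b_D$, then by Fong--Reynolds and Harris--Kn\"orr $N$ would possess $|G:N|$ blocks covered by $B$, contradicting the $G$-stability of $b$. Without the uniqueness of the covered block, both of your embeddings collapse, since elements of $C_G(D)$ (respectively of $N_N(D)\cap N_G(D,B_D)$) then only permute the covered blocks rather than fix $b_D$.

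Your reverse embedding is also not justified as written: the appeal to ``$N$-equivariance of the Brauer-pair correspondence'' conceals the dual issue that $b_D$ is in general covered by $|G:N|$ distinct blocks of $C_G(D)$ (this is the case that actually occurs), so an element of $N_N(D,b_D)$ fixes $b_D$ but could a priori move $B_D$. The paper instead proves $N_G(D,B_D)\leq N_G(D,b_D)=C_G(D)N_N(D,b_D)$, deduces $|N_N(D,b_D):C_N(D)|\geq |N_G(D,B_D):C_G(D)|=15$, and then forces equality from the hypothesis that the inertial index is $15$ together with the maximality of $C_{15}$ among odd-order subgroups of $GL_4(2)$; the block counts then follow from Harris--Kn\"orr. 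Your main line never uses the inertial-index hypothesis, which is a warning sign; your parenthetical ``fallback'' is essentially the paper's actual argument and should be the main argument, but it still requires the Harris--Kn\"orr step above to obtain the containment $N_G(D,B_D)\leq N_G(D,b_D)$ in the first place. Part (i) and the final dichotomy (standard Clifford theory for a stable block under a cyclic $\ell'$-quotient of prime order) are fine.
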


\begin{proof}
Part (i) is immediate.

(ii) Recall that the blocks of $C_N(D)$ with defect group $D$ correspond to inflations of irreducible characters in blocks of defect zero of $C_N(D)/D$ (and similarly for $C_G(D)$).
Let $\theta$ be the canonical character of $b_D$.
Suppose that $C_G(D) \not\leq N$. So $|C_G(D):C_N(D)|\ne 1$.

Consider first the case $C_G(D) \leq N_G(D,b_D)$. Then $\theta$ extends to $|G:N|$ irreducible characters of $C_G(D)$, each inflated from a block of defect zero of $C_G(D)/D$.
Hence $b_D$ is covered by $|G:N|$ blocks of $C_G(D)$. Let $\theta_1$ be the irreducible character of $B_D$ extending $\theta$. We have $N_G(D,B_D) \leq N_G(D,b_D)$, so $|N_N(D,b_D):C_N(D)| \geq |N_G(D,B_D):C_G(D)|$. Since $C_{15}$ is a maximal subgroup of odd order of $GL_4(2)$, it follows that $|N_N(D,b_D):C_N(D)| = |N_G(D,B_D):C_G(D)|$. The same is true for each block of $C_G(D)$ covering $b_D$. It follows that $N_G(D)$ possesses $|G:N|$ blocks covering $(b_D)^{N_N(D)}$, and so $G$ possesses $|G:N|$ blocks covering $b$ by~\cite{hk85}.

Now consider the case $C_G(D)$ is not in $N_G(D,b_D)$. Then $B_D$ covers $|G:N|$ many $C_G(D)$-conjugates of $b_D$. We have $|N_G(D,B_D):N_G(D,b_D)|=|G:N|$, so $|N_N(D,b_D):C_N(D)| = |N_G(D,B_D):C_G(D)|$. The same is true for each block of $C_N(D)$ covered by $B_D$. It follows that $N_N(D)$ possesses $|G:N|$ blocks covered by $(B_D)^{N_G(D)}$, and so $N$ possesses $|G:N|$ blocks covered by $b$ by~\cite{hk85}, contradicting the $G$-stability of $b$.
\end{proof}

When it comes to reducing the desired result to the consideration of quasi-simple groups, we will be unable to rule out the case that we have a quasi-simple normal subgroup of index $3$. We must consider this situation in more detail. This is the object of the next results.

For a group $G$ and a subgroup $X$ of $G$, denote by $\Aut(G)_X$ the subgroup of $\Aut(G)$ which leaves $X$ invariant.
Denote by $\overline{ \Aut(G)_X} $ the image of $\Aut(G)_X$ in $\Aut(X)$ through the restriction map.
 Denote by $\Aut_G(X)$ the subgroup of $\Aut(X)$ of automorphisms induced by conjugation by elements of $G$. So $\Aut_G(X) $ is naturally isomorphic to $N_G(X)/C_G(X)$.

\begin{proposition}\label{class-elem-auto} Let $q$ be an odd prime power and $n$ a natural number.
Let $G=SL_n(q)/Z_0$ (respectively $SU_n(q)/Z_0$), where $Z_0 $ is a central subgroup of $SL_n(q) $ (respectively $SU_n(q)$) and suppose that $Z(G)=O_{2'}(G) $. Let $P \leq G $ be a defect group of a $2$-block of $G$
and suppose that $P$ is elementary abelian of order $2^t \geq 8$. Set $u=t+2$ if $n$ is even and $u=t+1$ if $n$ is odd.
Then $\overline{\Aut (G)_P} = \Aut_G(P)$ and $\Aut_G(P)$ is a subquotient of $ S_{u}$, where $S_u$ denotes the symmetric group on $u$ letters.
\end{proposition}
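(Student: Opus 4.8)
The plan is to show that, up to conjugacy, $P$ is the Sylow $2$-subgroup of an $F$-stable maximal torus of the ambient classical group --- concretely, a ``sign group'' attached to a partition of $n$ into odd parts --- and then to read off both $\Aut_G(P)$ and $\overline{\Aut(G)_P}$ from the normaliser of that torus, disposing of field and graph automorphisms by inspection.

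First I would reduce to the simply connected group. As $Z(G)=O_{2'}(G)$ has odd order, $P\cap Z(G)=1$; after replacing $Z_0$ by the Sylow $2$-subgroup of $Z(\SL_n(q))$ (respectively $Z(\SU_n(q))$) --- which alters $G$ only by an odd central quotient --- I may assume $G=H/Z_0$ with $H=\SL_n(q)$ or $\SU_n(q)$ and $Z_0=O_2(Z(H))$, and let $\hat B$ be the $2$-block of $H$ corresponding to $B$, with defect group $\hat P\supseteq Z_0$ and $\hat P/Z_0\cong P$. Then $\hat P$ has exponent at most $4$, since $\hat P/Z_0$ is elementary abelian and $|Z_0|\le 2$. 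By Proposition \ref{2abelian-simp}, no $2$-block of $H$ whose semisimple label has non-toral connected centraliser has a defect group that becomes elementary abelian of order $\ge 8$ modulo a central $2$-subgroup; hence the label of $\hat B$ has toral connected centraliser, and by Lemma \ref{lem:nilp-cov-char}, the analysis of Section \ref{reductive groups}, and the Fong--Srinivasan description of the blocks of $\GL_n(q)$ and $\GU_n(q)$ (Bonnaf\'e--Rouquier equivalences preserving defect groups), $\hat P$ is conjugate to a Sylow $2$-subgroup of an $F$-stable maximal torus $T^F$ of $H$, with $T$ indexed by a partition $n=e_1+\dots+e_r$. The exponent bound forces all $e_i$ odd (so that $(q-1)_2=2$ in the linear and $(q+1)_2=2$ in the unitary case), whence $\hat P=\{\,\mathrm{diag}(\varepsilon_1 I_{e_1},\dots,\varepsilon_r I_{e_r}):\varepsilon_i=\pm1,\ \prod_i\varepsilon_i^{e_i}=1\,\}$ lies in the split diagonal torus and is elementary abelian of order $2^r$ or $2^{r-1}$. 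Counting now gives $r\le u$: if $n$ is odd then $Z_0=1$, $P=\hat P$, and the determinant relation is nontrivial, so $r=t+1=u$; if $n$ is even then $|Z_0|=2$, $|\hat P|=2^{t+1}$, and $r\le t+2=u$.

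Granting this, $C_H(\hat P)=\big(\prod_i \GL_{e_i}(q)\big)\cap H$ (respectively $\big(\prod_i\GU_{e_i}(q)\big)\cap H$) acts trivially on $\hat P$, and $N_H(\hat P)/C_H(\hat P)\cong\prod_e S_{m_e}$, the group of permutations of the $m_e$ blocks of a given size $e$ (each such permutation being realised by a block-monomial element of $H$ after correcting the determinant inside $C_H(\hat P)$). Thus $\Aut_H(\hat P)$ is the image of $\prod_e S_{m_e}$ in $\Aut(\hat P)$, a subquotient of $S_r\le S_u$; passing to $G=H/Z_0$ only takes a further quotient, so $\Aut_G(P)=N_G(P)/C_G(P)$ is a subquotient of $S_u$. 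For $\overline{\Aut(G)_P}=\Aut_G(P)$, the inclusion ``$\supseteq$'' is clear, and for ``$\subseteq$'' I would use that $\Aut(G)$ is generated by inner-diagonal, field and graph automorphisms: in standard form the field automorphisms, and in the linear case the transpose--inverse graph automorphism, fix every sign matrix and so act trivially on $P$, while an inner-diagonal automorphism fixing $P$ is conjugation by a block-monomial element, which --- after the same determinant correction --- lies in $N_G(P)$ and hence induces an element of $\Aut_G(P)$.

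The step I expect to be the main obstacle is pinning down $\hat P$ when $n$ is even: there $Z_0=C_2$, the defect group $\hat P$ has order $2^{t+1}$, and a priori $P$ need only lift to an abelian group of exponent $4$, with the ambient maximal torus possibly involving non-split pieces carrying elements of order $4$. Ruling this out --- showing from the block theory of $\GL_n(q)$ and $\GU_n(q)$ together with Sections \ref{Leviabelian2Sylow}--\ref{reductive groups} that all parts $e_i$ are odd and $\hat P$ is genuinely the elementary abelian sign group, so that the single extra factor of $2$ contributed by $Z_0$ is exactly what makes $u=t+2$ and not $t+1$ --- along with disposing of the finitely many small configurations where $O_2$ of a linear or unitary factor strictly exceeds its central involution (for instance $\GL_2(3)$, $\GU_2(3)$), is where the real work lies.
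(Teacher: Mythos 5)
Your overall strategy is the same as the paper's: identify $P$, up to conjugacy, with the image of the diagonal ``sign group'' $T_0$ attached to a decomposition of $n$ into $u$ odd parts, get the $S_u$-bound from the permutation action of the normaliser on the corresponding eigenspaces (equivalently, on the diagonal blocks), and get $\overline{\Aut(G)_P}=\Aut_G(P)$ from the fact that diagonal, field and graph automorphisms centralise $T_0$ pointwise. The one substantive difference is how the shape of $P$ is obtained. The paper does not re-derive it: it quotes the statements and proofs of \cite[Lemmas 12.4, 13.4]{KKL} for exactly this, whereas you sketch a derivation from Proposition \ref{2abelian-simp}, Lemma \ref{lem:nilp-cov-char} and the $\ell=2$ block theory of $\GL_n(q)$ and $\GU_n(q)$ (nilpotent covering block with toral defect group, intersection with $\SL_n(q)$ \`a la Kn\"orr, then an exponent argument forcing all parts odd). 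That is precisely the step you flag as ``where the real work lies'', and it is precisely what the citation supplies; your sketch looks like it would go through, but as written it is the unfinished part of your argument, while everything after it is in essence the paper's proof. Concretely: the paper proves the embedding of $N_{\SL_n(q)}(T_0)/C_{\SL_n(q)}(T_0)$ into $S_u$ by an explicit argument intersecting $-1$-eigenspaces of elements of $T_0$ (using $u\geq 4$, which follows from $|P|\geq 8$) --- this is the detailed version of your assertion that the normaliser is block-monomial --- and it proves the first assertion by lifting $\sigma\in\Aut(G)_P$ to $\SL_n(q)$ via simplicity of $\PSL_n(q)$ and observing that the subgroup of $\Aut(\SL_n(q))$ generated by diagonal, field and graph automorphisms fixes $T_0$ elementwise.

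Two small points to tighten. First, in your argument for $\overline{\Aut(G)_P}\subseteq\Aut_G(P)$ you only discuss automorphisms that are themselves inner-diagonal or pure field/graph maps; an arbitrary $\sigma\in\Aut(G)_P$ is a mixed product whose individual factors need not stabilise $P$. Phrase it as the paper does: write $\sigma=\iota\rho$ with $\iota$ inner and $\rho$ in the subgroup generated by diagonal, field and graph automorphisms; since that whole subgroup centralises $T_0$, the factor $\iota$ stabilises $T_0$ and $\sigma|_{P}=\iota|_{P}\in\Aut_G(P)$. Second, the precise congruence on $q$ plays no role in either assertion, so the discrepancy between your $(q\mp 1)_2=2$ and the congruence quoted in the paper's proof from \cite{KKL} does not affect the conclusion; what matters, and what both arguments rest on, is that $P$ is the image of the elementary abelian sign group on $u$ odd blocks.
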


\begin{proof} Let us first consider the case that $G= SL_n(q)/Z_0$. By the statements and proofs of \cite[Lemmas 12.4, 13.4]{KKL},
 $t$ is even, $ q \equiv -3\pmod{8}$. Moreover, replacing $P$ by a $G$-conjugate if necessary, $P$ has the following structure:
Consider $GL_n(q)$ in its natural matrix representation. There is a decomposition
\[n = n_1 + \cdots + n_u, \]
into odd natural numbers $n_i $ such that denoting by $a_i $ the diagonal element of $GL_n(q)$ with entry $-1$ in positions $n_1 + \cdots +n_{i-1} + 1, \ldots , n_1 + \cdots +n_{i} $ and entry
$1$ elsewhere, and by $T_0$ the subgroup of $GL_n(q)$ generated by the elements $a_ia_j$, $1\leq i, j\leq u $, $P =(T_0Z_0)/Z_0$
(here $n_0 $ is to be taken to be $0$).

Let $\sigma \in \Aut (G)_P$. Since $PSL_n(q)$ is simple (as $n \geq 3 $), $\sigma $ lifts to an automorphism of $SL_n(q)$. We denote the lift of $\sigma $ also by $\sigma$.
Since $ T_0 $ is the unique Sylow $2$-subgroup of the inverse image of $P$ in $SL_n(q)$, $\sigma \in
\Aut (SL_n(q))_{T_0} $. Thus, in order to prove the first assertion, it suffices to prove that $ \overline {\Aut (SL_n(q))_{T_0}} = \Aut_ {SL_n(q)}(T_0)$.

Let $H$ be the subgroup of diagonal matrices of $GL_n(q)$. Let $q=p^r $, $p$ a prime and let $ \varphi : SL_n(q) \to SL_n(q)$ be the automorphism which raises every matrix entry to the $p$-th power. Let $ \tau : SL_n(q) \to SL_n(q) $ be the transpose inverse map.
By the structure of the automorphism groups of $SL_n(q)$ (see for instance \cite[Theorems 2.5.12, 2.5.14] {GLS3}),
$\Out(SL_n(q))$ is generated by the images of $\tau $, $\varphi $ and $\Aut_H(SL_n(q))$ in $\Out(SL_n(q))$
On the other hand, $\tau $, $\varphi $ and all elements of $\Aut_H(SL_n(q)) $ act as the identity on $T_0 $. Thus, $ \overline {\Aut (SL_n(q))_{T_0}} = \Aut_ {SL_n(q)}(T_0)$ and the first assertion is proved.

It has been shown above that there is a surjective homomorphism from $\Aut (SL_n(q))_{T_0} $ to $ \Aut(G)_P $ and that
 $\Aut (SL_n(q))_{T_0} = \Aut_ {SL_n(q)}(T_0)$. Thus, in order to
prove the second assertion, it suffices to prove that $ \Aut (SL_n(q))_{T_0} $ is isomorphic to a subgroup of $S_u $.

Let $V$ be an ${\mathbb F_q}$-vector space underlying the natural matrix representation of $GL_n(q)$ and for each $i$, $1\leq i \leq u $, let $V_i $ be the $-1$ eigenspace of $a_i $. So $ V = \oplus_{1\leq i\leq u } V_i $ and $dim (V_i) = n_i $, $1\leq i\leq u $. Since $T_0 $ is generated by pairs of involutions $a_ia_j $, the $-1 $ eigenspaces of elements of $T_0 $ are precisely of the form $ \oplus _{i \in I } V_i$, where $I$ ranges over subsets of even cardinality of $\{ 1, \ldots, u \} $.

Let $ g \in N_{SL_n(q)}(T_0) $, and let $i, j, k \in \{ 1,\ldots, u \}$ be pairwise distinct (this is possible since $|P| \geq 8 $ implies that $ u \geq 4 $). Since $V_i \oplus V_j $ is the $-1 $-eigenspace of $a_ia_j$, $\,^g(V_i \oplus V_j) $ is the $-1$ eigenspace of $\,^g(a_ia_j) \in T_0 $. Hence, $\,^g(V_i \oplus V_j) $ (and similarly $\,^g(V_j \oplus V_k) $) is a direct sum of an even number of the $V_i$'s. Since $ \, ^gV_i =
\, ^g( V_i\oplus V_j ) \cap \, ^g( V_i\oplus V_k) $, and since for any two subsets $J, K $ of $\{1, \ldots, u \} $, $(\oplus_{t\in J} V_t) \cap (\oplus _{t\in K} V_t)= \oplus_{t\in J\cap K} V_t$ we have that $\,^gV_i $ is also a direct sum of some of the $V_i$'s, say $\,^gV_i = \oplus_{t\in I'} V_t $. So, $V_i= \oplus_{t\in I'} \, ^{g^{-1} } V_t $. But by the same argument as before, applied to $g^{-1}$, it follows that $ I'$ consists of a single element. Hence,
for any $g \in N_{SL_n(q)}(T_0)$ and any $i$, $1\leq i \leq u$, $\,^g V_i= V_j $ for some $j$, $1\leq j \leq u $.
Further, again by considering triples of three indices $i, j, k $ one sees that $ g\in C_{SL_n(q)}(T_0)$ if and only if
$\,^gV_i = V_i $ for all $ i\in I $. Thus, $ \Aut _{SL_n(q)}(T_0) \cong N_{SL_n(q)}(T_0)/C_{SL_n(q)}(T_0)$ is isomorphic to a subgroup of $S_u$ as required.

This proves the proposition in case $G$ is a quotient of $SL_n(q)$. The case of $SU_n(q)$ is similar and we omit the details.
\end{proof}

\begin{proposition}\label{elem16-3} Let $H$ be a finite group with $Z:= Z(H) =O_{2'}(H)$ cyclic of odd order and $G$ a quasi-simple group such that $Z \leq G \unlhd H $ and $H/Z \leq \Aut(G/Z)$, $[H:G]=3 $. Let $A$ be a $2$-block of $H$ and $B$ an $H$-stable $2$-block of $G$ covered by $A$.
Suppose that the defect groups of $A$ and $B$ are elementary abelian of order $16$. Then the inertial index of $A$ is not $15$.
\end{proposition}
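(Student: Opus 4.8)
The plan is to argue by contradiction: suppose $A$ has inertial index $15$. Since $B$ is $H$-stable and covered by $A$, and the defect groups of both are elementary abelian of order $16$, the first step is to pin down the possible structure of $G$ (equivalently of $G/Z$) from Theorem~\ref{ab-2-str}. The block $B$ of the quasi-simple group $G$ is not nilpotent (a nilpotent block has inertial index $1$, and since $B$ is $H$-stable of index $3$ in $A$, Lemma~\ref{inertialquotients} would force the inertial index of $A$ to be $1$ or $3$, not $15$). So $B$ falls into case (i), (ii) or (iv) of Theorem~\ref{ab-2-str} applied to the quasi-simple group $G$; since defect groups have order $16$, cases (i) and (ii) are excluded (the defect groups there have order $8$), and in case (iv) the defect group would have a Klein $4$-direct factor coming from $L_1$ with $L_0$ abelian --- but for $G$ quasi-simple the relevant analysis is Proposition~\ref{2abelian-simp} and Proposition~\ref{nilp-cov-nilp}. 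Tracing through these, an elementary abelian defect group of order $16$ of a non-nilpotent $2$-block of a quasi-simple group $G=\mathbf{G}^F/Z$ with $\mathbf{G}$ simple simply connected, $Z=Z(G)$ of odd order, forces (via Proposition~\ref{nilp-cov-nilp} and its final assertion, after excluding the torus-centralizer cases that give nilpotent blocks) $G/Z$ to be of type $A_n(q)$ or ${}^2A_n(q)$ for suitable $n$, or of type $E_6$ --- and the $E_6$ case is ruled out because there $Z(G)$ has order dividing $3$ and the resulting defect group structure does not match, or alternatively because $[H:G]=3$ and $\Out$ considerations will not produce inertial index $15$. So we reduce to $G/Z \cong PSL_n(q)$ or $PSU_n(q)$ with $q$ odd, $q\equiv -3\pmod 8$, and $P$ elementary abelian of order $16$.

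With $G=SL_n(q)/Z_0$ or $SU_n(q)/Z_0$ identified, the second step is to invoke Proposition~\ref{class-elem-auto}: with $t=4$ we get $u=t+2=6$ if $n$ is even and $u=t+1=5$ if $n$ is odd, and the proposition tells us that $\overline{\Aut(G)_P}=\Aut_G(P)$ and that $\Aut_G(P)$ --- hence every subgroup of $\Aut(G)$ stabilizing $P$, restricted to $P$ --- is a subquotient of $S_6$ (resp.\ $S_5$). The key numerical point is that $H/Z$ acts on $P$ faithfully through its image in $\Aut(G/Z)$, and the inertial quotient of $A$ is a subgroup of $N_H(P,B_P)/C_H(P) \leq \Aut_H(P) \leq \overline{\Aut(H)_P}$. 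Because $\overline{\Aut(G)_P}=\Aut_G(P)$, the action of $H$ on $P$ factors through $G$ (the extra index-$3$ piece $H/G$ acts on $P$ through automorphisms already induced by $G$, by Proposition~\ref{class-elem-auto}), so the inertial quotient of $A$ is in fact a subquotient of $S_6$ or $S_5$.

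The final step is the arithmetic contradiction: the inertial index $15$ means the inertial quotient contains an element of order $15$, i.e.\ an element of order $15$ acting on $P\cong C_2^4$, equivalently a cyclic subgroup $C_{15}\leq GL_4(2)\cong A_8$. But $C_{15}$ is not a subquotient of $S_5$ (which has order $120$, not divisible by $9$ but that is irrelevant --- rather $S_5$ has no element of order $15$ since $15=3\cdot 5$ needs a $3$-cycle and a $5$-cycle on disjoint points, impossible in $S_5$), and likewise $S_6$ has no element of order $15$ (a $3$-cycle plus a $5$-cycle needs $8$ points). Hence $\Aut_G(P)$, being a subquotient of $S_5$ or $S_6$, contains no element of order $15$, so the inertial index of $A$ cannot be $15$, the desired contradiction.

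The main obstacle I anticipate is the first step: cleanly extracting from Theorem~\ref{ab-2-str}, Proposition~\ref{2abelian-simp} and Proposition~\ref{nilp-cov-nilp} that the only quasi-simple groups with a non-nilpotent $2$-block of elementary abelian defect group of order $16$ are the relevant $SL_n$/$SU_n$ quotients (and definitively excluding $E_6$ and any other type), so that Proposition~\ref{class-elem-auto} actually applies. The final group-theoretic fact --- that neither $S_5$ nor $S_6$ has a subquotient isomorphic to $C_{15}$ --- is routine: any subquotient of order divisible by $15$ would have an element of order $15$ or at least a quotient with one, and a direct inspection of cycle types in $S_5,S_6$ rules this out since an element of order $15$ requires disjoint cycles of lengths $3$ and $5$, needing at least $8$ moved points.
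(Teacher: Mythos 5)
Your overall route is the same as the paper's (use Lemma \ref{inertialquotients} to force the inertial quotient of $B$ to contain $C_5$, reduce via the classification/structure results to the linear and unitary case, then apply Proposition \ref{class-elem-auto} and the fact that neither $S_5$ nor $S_6$ has an element of order $15$), and that final arithmetic step is exactly what the paper does. However, there are two genuine gaps. First, your dismissal of case (i) of Theorem \ref{ab-2-str} -- ``the defect groups there have order $8$'' -- is false: case (i) contains $A_1(2^a)$, and for $a=4$ the group $PSL_2(16)$ has elementary abelian Sylow $2$-subgroups of order $16$ whose principal block has inertial index $15$. This is precisely the dangerous case, and it cannot be excluded on defect-group grounds; the paper excludes it using the hypotheses $[H:G]=3$ and $H/Z\leq \Aut(G/Z)$, since $\Out(SL_2(2^4))$ is a $2$-group (cyclic of order $4$), so no such overgroup $H$ exists. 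Your argument as written simply skips the one case where $C_{15}$ genuinely occurs as an inertial quotient of the quasi-simple group itself.

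Second, the exceptional types, in particular $E_6$, are not handled. Your two suggested dismissals both fail: the defect group structure \emph{does} match (Proposition \ref{nilp-cov-nilp} is stated exactly for $P$ elementary abelian of order $16$ in type $E_6$), and index-$3$ extensions of $E_6(q)$-type groups do exist (diagonal/field automorphisms of order $3$), so ``$\Out$ considerations'' do not rule this out. The paper's argument here is substantive: since $B$ is not nilpotent and $\bar G$ is not of type $A_n$, Proposition \ref{nilp-cov-nilp} gives a Bonnaf\'e--Rouquier Morita equivalence from $B$ to a block whose defect group is a direct product of two rank-$2$ factors each invariant under the normaliser, while Lemma \ref{inertialquotients} forces $\Aut_G(D)$ to contain a $C_5$, which leaves no proper nontrivial direct factor of $D$ invariant; by \cite{ks13} these two facts are incompatible. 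This is the step you yourself flagged as the ``main obstacle,'' and it is genuinely needed (as is the observation, via Proposition \ref{2abelian-levi}, that symplectic/orthogonal types would give nilpotent blocks, contradicting the $C_5$ in the inertial quotient). Without these two pieces the proof is incomplete, although the remainder of your argument (the reduction of the linear/unitary case to $S_5$, $S_6$ having no element of order $15$) is correct and agrees with the paper.
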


\begin{proof} Note that since $H/Z \leq \Aut(G/Z) $, $Z= Z(G)$. Let $ D \cong C_2 \times C_2 \times C_2 \times C_2 \leq G$ be a defect group of $A$ and of $B$. Note that by Lemma \ref{inertialquotients}, the inertial quotient of $B$ contains a subgroup isomorphic to $C_5 $ and hence $\Aut_G(D)$ contains a subgroup of order $5$.

Again, we go through the various possibilities for $G $ and $\bar G=G/Z$. If $\bar G$ is an alternating or sporadic group, then $G$ does not have a block with defect group $D$.
If $\bar G$ is a finite group of Lie type in characteristic $2$, not isomorphic to any of $\,^2F_4(2)'$, $ B_2(2)'$ or $PSp_4(2)$ then $\bar D:= D Z(G)/Z(G) \cong D $ is a
Sylow $2$-subgroup of $\bar G$, hence $\bar G =PSL_2(2^4)$. But $ \bar G \ne SL_2 (2^4 ) $ as $\Out (SL_2(2^4)) $ is a $2$-group.
The cases that $\bar G $ is isomorphic to one of $\,^2F_4(2)'$, $ B_2(2)'$ or $PSp_4(2)$ can be handled as in the proof of Theorem \ref{ab-2-str}, as can the case that $G$ is an exceptional extension of $\bar G$.

Suppose that $\bar G$ is a finite group of Lie type in odd characteristic and that $G$ is a non-exceptional extension of $\bar G$. By Proposition \ref{2abelian-levi}, if $\bar G$ is a symplectic or orthogonal group, then $B$ is nilpotent, a contradiction. If $\bar G$ is a projective special linear or
unitary group, then since neither $S_5 $ nor $S_6$ contain an element whose order is divisible by $15$, we get a contradiction by Lemma \ref{class-elem-auto}.

Thus $\bar G$ is of exceptional type. Since $B$ is not nilpotent and $\bar G$ is not of type $A_n$, by Proposition~\ref{nilp-cov-nilp}, $B$ is Morita equivalent to a block $C$ of a finite group $L$ with defect group $D'\cong D $ such that $D'$ is a product of two factors of rank $2$ each of which is invariant in $N_L(D')$.
It has been shown above that $\Aut_G(D)$ contains a subgroup of order $5$. In particular, $\Aut_G(D)$ does not leave invariant any proper non-trivial direct factor of $D$ and by \cite{ks13}, $B$ is not Morita equivalent to a block whose inertial quotient does leave a non-trivial factor of $D$ invariant.
 \end{proof}

\begin{theorem} \label{eldef16}
Let $B$ be a block of a finite group $G$ with elementary abelian defect group $D$ of order $16$ and inertial quotient $C_{15}$. Then $k(B)=k_0(B)=16$ and $l(B)=15$.
\end{theorem}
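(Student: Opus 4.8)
The plan is to assume, towards a contradiction, that $B$ is a block with $k(B)=8$ — the only alternative to $k(B)=16$ by~\cite{ks13}, which also shows that $k(B)=16$ forces $k_0(B)=16$ and $l(B)=15$ — chosen so that $|G|$ is minimal. One records at the outset three facts: $k(B)$, $k_0(B)$ and $l(B)$ are Morita invariants; the inertial quotient, being a subgroup of $\Aut(D)=\GL_4(2)$ of order $15$, acts on $D\cong\mathbb{F}_{16}$ as a Singer cycle, hence irreducibly; and $\CO[D\rtimes C_{15}]$ is the group algebra of a Frobenius group with $k=16$. The bulk of the argument is a reduction of this minimal counterexample to a block of a (nearly) quasi-simple group, following the template of the proofs of Theorems~\ref{homocyclictheorem} and~\ref{ab-2-str} and the reductions of~\cite{ks13} (where the vanishing of the relevant cocycle, $H^2(C_{15},k^\times)=0$, means no twisting obstruction arises).

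The two Fong reductions and minimality make $B$ quasi-primitive with $O_{2'}(G)=Z(G)$ cyclic; Proposition~\ref{KK} lets us assume $O^2(G)=O^{2'}(G)=G$; and if $O_2(G)\ne1$ then the block of $C_G(O_2(G))$ covered by $B$ is nilpotent, so Proposition~\ref{kp} replaces $B$ by a Morita-equivalent block with a normal defect group, which is then Morita equivalent to $\CO[D\rtimes C_{15}]$, giving $k(B)=16$, a contradiction. Thus $O_2(G)=1$ and $F(G)=Z(G)$. Taking $N\lhd G$ with $N/Z(G)$ minimal normal in $G/Z(G)$, the covered block $B_N$ is non-nilpotent (else Proposition~\ref{kp} applies again), so $D\cap N\ne1$ and $F^*(G)=E(G)Z(G)$ with the components transitively permuted. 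Since a $2$-block with defect group of order $\le2$ is nilpotent, each $D\cap L_i$ has order $\ge4$; as $|D|=16$ and the $D\cap L_i$ are conjugate, there are at most two components, and two is impossible because then $\Aut_G(D)$ would permute a decomposition $D=D_1\times D_2$ into Klein four groups, giving $\Aut_G(D)\hookrightarrow\Aut(D_1)\wr C_2\cong S_3\wr C_2$, which has no element of order $15$. So $G$ has a unique component $L$, which is quasi-simple, $F^*(G)=LZ(G)$, $G/LZ(G)\hookrightarrow\Out(L/Z(L))$ is solvable, and $B_L$ is a non-nilpotent block of $L$ with defect group $D\cap L$.

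If $G=LZ(G)$, I would apply Theorem~\ref{ab-2-str} to $B_L$: case (ii) is impossible since there $|D\cap L|=8<16$; in case (iv), $B$ is Morita equivalent to the tensor product of a block of an abelian group (with defect group of order $|D|/4=4$, hence $k=4$) and a Klein four block (with $k=4$), so $k(B)=16$; in case (iii), $B$ is nilpotent-covered, hence by Proposition~\ref{puig} Morita equivalent to $\CO[D\rtimes C_{15}]$, so $k(B)=16$; and in case (i) the only group with $|D|=16$ is $\PSL_2(16)=A_1(2^4)$, with $B$ the principal block and $k(B)=16$. Each case contradicts $k(B)=8$. If $G\ne LZ(G)$, then since $G=O^2(G)$ there is $N\lhd G$ with $LZ(G)\le N$ and $G/N$ cyclic of odd prime order $w$; one argues as in the proof of Theorem~\ref{homocyclictheorem}, using Lemma~\ref{inertialquotients} (the inertial quotient of the covered block $B_N$ is again $C_{15}$ unless $w\in\{3,5\}$, in which cases it can drop to a proper subgroup, where~\cite{ks13} determines $k(B_N)$), Clifford theory to compare $l(B)$ with $l(B_N)$, and the Hida--Koshitani criterion~\cite{HidaKoshitani} to obtain a Morita equivalence $B\sim B_N$ contradicting minimality whenever the irreducible Brauer characters of $B_N$ are $G$-stable — the single configuration this does not settle being a quasi-simple normal subgroup of index $3$, which is excluded by Proposition~\ref{elem16-3}.

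The main obstacle is precisely this index-$3$ case: neither Lemma~\ref{inertialquotients} nor~\cite{HidaKoshitani} resolves it, since the inertial quotient could a priori drop from $C_{15}$ to $C_5$ or the three irreducible Brauer characters could fuse; one must instead use the explicit description of $\Aut_G(D)$ for $\PSL_n(q)$ and $\PSU_n(q)$ in Proposition~\ref{class-elem-auto} (an element of order $15$ in $\Aut_G(D)$ makes $\Aut_G(D)$ too big to sit in $S_5$ or $S_6$), together with Proposition~\ref{nilp-cov-nilp} and Theorem~\ref{ab-2-str} for the exceptional types — this being the content of Proposition~\ref{elem16-3}. A secondary point to watch throughout is that every Fong- and Clifford-theoretic step preserves both $|D|=16$ and the inertial quotient $C_{15}$; the former is routine, and the latter holds because $15$ is odd, so any index dividing a power of $2$ in the relevant chain of inertial quotients must be $1$. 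Once the contradiction is reached, $k(B)=16$, and then $k_0(B)=16$ and $l(B)=15$ by~\cite{ks13}.
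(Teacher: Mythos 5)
Your proposal is correct in outline and takes essentially the same route as the paper: a minimal counterexample with $k(B)=8$ (via~\cite{ks13}), the Fong reductions, the transitive/irreducible action of the inertial quotient $C_{15}$ on $D$ to force $O_2(G)\in\{1,D\}$ and then a single quasi-simple component, Clifford-theoretic counting together with Lemma~\ref{inertialquotients} for normal subgroups of odd prime index with the index-$3$ configuration excluded by Proposition~\ref{elem16-3}, and finally cases (i), (iii), (iv) of Theorem~\ref{ab-2-str} (with $\PSL_2(16)$, Proposition~\ref{puig}, and the $4\times 4$ tensor-product count) in the quasi-simple case. The only deviations are presentational: the paper disposes of the normal-defect-group case and the prime-index cases $w\geq 5$ by direct counts of ordinary and Brauer characters rather than via Proposition~\ref{kp} or the Hida--Koshitani criterion (which, as in Theorem~\ref{homocyclictheorem}, applies only in the configuration where $b$ is covered by $w$ blocks and restriction is a bijection on Brauer characters, not merely when the Brauer characters of the covered block are $G$-stable), but these do not affect the substance of the argument.
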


\begin{proof}
It is clear from Brauer's second main theorem that $k(B)-l(B)=1$.

Let $B$ be a counterexample to $k(B)=16$ with $(|G:Z(G)|,|G|)$ minimised in the lexicographic ordering. Hence by~\cite{ks13} $k(B)=8$.

By the first Fong reduction and minimality, $B$ is quasi-primitive. By the second Fong reduction and minimality, $O_{2'}(G)$ is cyclic and central in $G$.

Suppose that $N \lhd G$ with $N \cap D \neq 1$. Since $N_G(D)$ acts transitively on the non-trivial elements of $D$, it follows that $D \leq N$. In particular, if $N=O_2(G)$, then $D=O_2(G)$. But then $k(B)=16$, contradicting our choice of $B$. Hence $O_2(G)=1$.

By Proposition \ref{kp}, if $N \lhd G$ with $N \cap D = 1$, then by minimality $N \leq Z(G)$.

If $N=O^2(G) \neq G$, then since $B$ is quasi-primitive there is a unique block $b$ of $N$ covered by $B$ (and $B$ is the unique block of $G$ covering $b$). But then $D \cap N$ is a defect group of $b$ and $DN/N \in \Syl_2(G/N)$, so by the above $D \cap N = 1$. But then $N \leq Z(G)$, a contradiction. Hence $O^2(G)=G$.

Let $N$ be a normal subgroup of $G$ minimal subject to strictly containing $Z(G)$. Then $D \leq N$, and $N=F^*(G)$. Let $L_1,\ldots,L_t$ denote the components of $G$. We have seen that these are permuted transitively by $G$, and $L:=L_1 * \cdots * L_t \lhd G$. Then $D \leq L$. Let $B_L$ be the unique block of $L$ covered by $B$, and let $B_i$ be the unique block of $L_i$ covered by $B_L$ ($i=1,\ldots,t$). Then $B_L$ has defect group $D \cap L$, and $B_i$ has defect group $D \cap L_i$ ($i=1,\ldots,t$). Thus $D \cap L = (D \cap L_1) \times \cdots \times (D \cap L_t)$, where $D \cap L_1, \ldots,D \cap L_t$ are conjugate in $G$ (since $B_1,\ldots,B_t$ are). This implies that $t \in \{1,2,4\}$. However the existence of an element of order $15$ transitively permuting the non-trivial elements of $D$ then forces $t=1$. Hence $L$ is quasi-simple and $F^*(G)=Z(G)L$.

By the Schreier conjecture, $G/F^*(G)$ is solvable. Suppose $N \leq G$ with $|G:N|=w$, where $w$ is prime. Then $w$ is odd. Let $b$ be the unique block of $N$ covered by $B$. Note that since $C_{15}$ is a maximal odd order subgroup of $GL_4(2)$, the inertial quotient of $b$ must be a subgroup of $C_{15}$, and further if $w >5$, then it must be $C_{15}$, in which case $k(b)=16$ by minimality. By~\cite{ks13}, in any case $k(b)=8$ or $16$.

Suppose first that $B$ is the unique block of $G$ covering $b$. Consider the action of $G$ on the irreducible characters of $b$. If $w \geq 11$, there is a fixed $\theta \in \Irr(b)$, and by Clifford theory $k(B)\geq w$, a contradiction. Suppose $w=7$. Since $k(b)=16$, there are at least two fixed $\theta \in \Irr(b)$, and so $k(B)>14$, a contradiction.

Suppose $w=5$ and that $k(b)=16$. We must turn to Brauer characters to obtain a contradiction. We have $l(b)=15$. Then either $l(B)>25$ or $l(B) =3$, according to whether there are fixed points or not. In either case we have a contradiction.

Suppose that $w=5$ and that $k(b)=8$. Then $l(b)=3$, from which it follows that $l(B)=15$, a contradiction.

The case $w=3$ is ruled out by Proposition \ref{elem16-3}.

Suppose that $B$ is not the unique block of $G$ covering $b$. Then by Lemma \ref{inertialquotients}, $B$ is one of $w$ blocks covering $b$, and $b$ has inertial index $15$. Hence $k(b)=16$ by minimality. Write $t$ for the number of $G$-orbits of $\Irr(b)$. Then by Clifford theory the number of irreducible characters in blocks of $G$ covering $b$ is $(16-tw)w+t$. On the other hand, there are $w$ blocks covering $b$, each with $8$ irreducible characters. Hence $t=\frac{8w}{w^2-1}$. But $\frac{8w}{w^2-1}$ is only an integer when $w=3$. Again, the case $w=3$ is ruled out by Proposition \ref{elem16-3}.

We have shown that $G$ is quasi-simple with centre of odd order. Then we are in one of the cases (i), (iii) or (iv) of Theorem \ref{ab-2-str}. In case (i), $G \cong PSL_2(16)$, where it is indeed the case that the principal block has inertial index $15$. Checking using~\cite{gap}, we see that $k(B)=16$ in this case. In case (iii), by Proposition \ref{puig}, $B$ is Morita equivalent to a block of $N_G(D)$, and we see that $k(B)=16$. In case (iv) we again have $k(B)=16$, since Morita equivalence preserves the defect of a block and a block with Klein $4$-defect group has four irreducible characters, and we are done.
\end{proof}


\section{Donovan's conjecture for groups of the form $C_{2^m} \times C_{2^m} \times C_2$ for $m \geq 3$}
\label{homocyclicplusabit}
In this section,   by an $\ell $-block of a finite group $G$, we will mean  a  block of $kG$.
\begin{theorem} \label{2m2m2}
Let $D=C_{2^m} \times C_{2^m} \times C_2$, where $m \geq 3$. Then Donovan's conjecture holds for $D$.
\end{theorem}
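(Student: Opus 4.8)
The plan is to reduce, exactly as in the proofs of Theorem \ref{homocyclictheorem} and Theorem \ref{eldef16}, to the case of a quasi-simple group, and then invoke the structural results of Section~\ref{reductive groups} together with Theorem \ref{ab-2-str}. First I would take a block $B$ of $kG$ with defect group $D \cong C_{2^m} \times C_{2^m} \times C_2$. By the two Fong reductions we may assume $B$ is quasi-primitive and that $O_{2'}(G)$ is cyclic and central. If $O_2(G) \neq 1$, then since $\Aut(D)$ is a $\{2,3\}$-group (the relevant automizer again transitively permutes at most three ``involution-type'' factors, so the argument is the same flavour as Lemma \ref{normal}) one argues via Proposition \ref{kp} that $G$ is solvable, and the number of Morita equivalence classes of blocks of $2$-solvable groups with a fixed abelian defect group is finite by known results; so we may assume $O_2(G)=1$, hence $F^*(G) = E(G)Z(G)$. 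Writing $N/Z(G)$ for a minimal normal subgroup, if $B$ covers a nilpotent block of $N$ we finish by Proposition \ref{kp}, so we may assume not; then as in the earlier proofs the components are permuted transitively and $D \cap L_i$ are conjugate, which — because the rank of $D$ is only $3$ and the ``$C_2$'' factor cannot be broken up symmetrically — forces a single component, and one reduces (using the Schreier conjecture and Clifford theory over cyclic $2'$-quotients, together with \cite[4.1]{HidaKoshitani}) to $G$ quasi-simple with $Z(G)$ cyclic of odd order.

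Next I would pass, as in the proof of Theorem \ref{ab-2-str}, to $G = \bG^F/Z$ with $\bG$ simple simply connected in odd characteristic (the sporadic, alternating, and defining-characteristic cases are dispatched by the same tables as there, since $D$ has order $2^{2m+1}$ with $m\geq 3$, so $|D| \geq 128$, and very few simple groups have abelian Sylow $2$-subgroups or abelian-defect blocks of that size). Here the key point is that $D/O_2(Z(G))$-abelianity is automatic since $Z(G)$ has odd order, so $\hat B$ on $\bG^F$ itself has abelian defect group $\hat P \cong D$. If the connected centraliser of a semisimple label is a torus, Lemma \ref{lem:nilp-cov-char} gives a nilpotent-covered block and we are in the finitely-many-classes situation via Proposition \ref{puig}. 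Otherwise Proposition \ref{2abelian-simp} applies: but every case there produces a defect group that is either trivial, a Klein $4$-group, quaternion of order $8$, or of the form $P_0 \times P_1$ with $P_1$ quaternion of order $8$ and $P_0$ cyclic — in particular the $2$-rank is at most $2$ in the interesting non-nilpotent cases, and a group of rank $2$ can never be isomorphic to $C_{2^m}\times C_{2^m}\times C_2$ which has rank $3$. Hence the non-torus case forces nilpotency of $\hat B$, and therefore of $B$.

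So the upshot will be that for every quasi-simple $G$, a block with defect group $\cong C_{2^m}\times C_{2^m}\times C_2$ ($m\geq 3$) is nilpotent-covered (or we are in one of the bounded exceptional families of Theorem \ref{ab-2-str}(i)–(ii), which for $|D| \geq 128$ do not actually occur, since the relevant Sylow $2$-subgroups there have order $8$). Running the reduction backwards through Proposition \ref{kp}, Lemma \ref{nilcoveredandquotients} and Proposition \ref{puig} — exactly as in the proof of Theorem \ref{elabDonovan} — every block with defect group $D$ is Morita equivalent to its Brauer correspondent in $N_G(D)$, of which there are only finitely many Morita equivalence classes since $N_G(D)/C_G(D)$ embeds in the finite group $\Aut(D)$. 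I expect the main obstacle to be the bookkeeping in the reduction to quasi-simple groups — specifically, handling the index-$2$ and odd-prime-index normal subgroup cases so as to conclude $G = O^2(G) = O^{2'}(G) = E(G)$ and $Z(G)$ cyclic of odd order — but this is entirely parallel to the corresponding arguments already carried out in the proofs of Theorem \ref{homocyclictheorem} and Theorem \ref{eldef16}, so it should go through with only notational changes, the one genuinely new observation being the rank obstruction that rules out all the non-nilpotent outcomes of Proposition \ref{2abelian-simp}.
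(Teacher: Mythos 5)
Your endgame is essentially the paper's: once one is reduced to a quasi-simple group (or to the class of blocks produced by a D\"uvel-type reduction), Theorem \ref{ab-2-str} finishes, and your invariant-factor/rank observation ruling out the non-nilpotent structured cases (no group of the form $D_0\times C_2\times C_2$ can be isomorphic to $C_{2^m}\times C_{2^m}\times C_2$, which has exactly one cyclic invariant factor of order $2$) is correct. The genuine gap is in the reduction, precisely at the step you dismiss as ``only notational changes''. For a normal subgroup $N$ of index $2$ one has $G=ND$ and $D\cap N$ of index $2$ in $D$, and $D\cap N$ is isomorphic either to $C_{2^m}\times C_{2^{m-1}}\times C_2$ or to $C_{2^m}\times C_{2^m}$. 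The first case does run parallel to the rank-$2$ proof ($\Aut(D\cap N)$ is a $2$-group because the three cyclic factors are pairwise non-isomorphic --- and this is exactly where the hypothesis $m\geq 3$ is used; your proposal never locates where $m\geq 3$ enters, which is a symptom of the missing case). In the second case the block of $N$ covered by $B$ need not be nilpotent at all, and nothing in the proofs of Theorems \ref{homocyclictheorem} or \ref{eldef16} covers it: the paper handles it by writing $G=N\rtimes Q$ with $D=(D\cap N)\times Q$, applying the splitting theorem of \cite{kk96} to get $B$ Morita equivalent to a block of $N\times Q$, and then quoting the just-proved Theorem \ref{homocyclictheorem} for the block of $N$. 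Neither \cite{kk96} nor Theorem \ref{homocyclictheorem} occurs as an ingredient anywhere in your proposal, so this subcase is simply absent.

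The other reduction steps also do not transfer ``with only notational changes''. In the $O_2(G)\neq 1$ step, an automorphism of $D$ of order $3$ fixes an involution of $D$ (it acts nontrivially only on the homocyclic part), so a nontrivial central $2$-subgroup no longer forces nilpotency as in Lemma \ref{normal}; moreover $O_2(G)$ could be elementary abelian of rank $3$, in which case $G/C_G(O_2(G))$ embeds in $\mathrm{GL}_3(2)$, which is not soluble, so your conclusion ``$G$ is solvable'' does not follow as stated. Likewise, the odd-prime-index Clifford analysis in Theorems \ref{homocyclictheorem} and \ref{eldef16} rests on numerical input ($l(b)\in\{1,3\}$ from the perfect isometry with the Brauer correspondent, respectively $k(b)\in\{8,16\}$ from \cite{ks13}) which is not available for $D=C_{2^m}\times C_{2^m}\times C_2$. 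The paper's proof avoids all of this by starting from Proposition \ref{duvel}: its conditions (in particular $F(G)=Z(G)$ and $G=\langle D^g: g\in G\rangle$), together with $O^2(G)=G$ obtained from the index-$2$ analysis above and the Schreier conjecture, give at once that $G/Z(G)$ is a direct product of simple groups with a single relevant component, after which either $O_2(Z(G))=1$ or $D=P\times O_2(Z(G))$ (otherwise $B$ is nilpotent by Proposition \ref{watanabe}) and Theorem \ref{ab-2-str} concludes. So the strategy is right, but you need the \cite{kk96}/Theorem \ref{homocyclictheorem} argument for the homocyclic intersection case and a route (such as Proposition \ref{duvel}) that bypasses the character-counting steps you were planning to imitate.
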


\begin{proof}
It suffices to consider blocks $B$ of groups $G$ satisfying the conditions in Proposition \ref{duvel}. Let $D$ be a defect group for $B$, and write $D=P \times Q$, where $P \cong C_{2^m} \times C_{2^m}$ and $Q \cong C_2$.

We show that we may further assume that $O^2(G)=G$. Suppose that $N \lhd G$ with $|G:N|=2$. Since $B$ is quasi-primitive it follows that $G=ND$. If $N \cap D \cong P$, then $G=N \rtimes Q$ and by~\cite{kk96}, $B$ is Morita equivalent to a block of $N \times Q$ with defect group $D$. It then follows by Theorem \ref{homocyclictheorem} that there are only two Morita equivalence classes of such blocks. Hence $N \cap D \cong C_{2^m} \times C_{2^{m-1}} \times C_2$. Since $m \geq 3$, it follows that $\Aut(N \cap D)$ is a $2$-group, so that $B$ covers a nilpotent block of $N$, and we are done in this case by Proposition \ref{kp}. Hence we may suppose that $O^2(G)=G$. It follows by Schreier's conjecture that we may assume that $G/Z(G)$ is a direct product of simple groups. Further, we may take $O_2(Z(G))=1$ or $D = P \times O_2(Z(G))$, as otherwise $B$ would correspond to a nilpotent block of $G/O_2(Z(G))$, and would itself be nilpotent by Proposition \ref{watanabe}. Further, it is clear that we may also take $G$ to 
have a single component, i.e., that $G$ is quasi-simple. The result follows by Theorem~\ref{ab-2-str}.
\end{proof}

\begin{center} ACKNOWLEDGEMENTS \end{center}

The first two authors thank the Friedrich-Schiller-Universit\"at Jena for their hospitality during their visits. The fourth author is supported by the German Academic Exchange Service (DAAD), the German Research Foundation (DFG) and the Carl Zeiss Foundation.


\begin{thebibliography}{99}

\bibitem{Anclassodd} J. An, \emph{$2$-weights for classical groups},
J. Reine Angew. Math. \textbf{439} (1993), 159--204.

\bibitem{Bon2004} C. Bonnaf{\'e}, {\em Quasi-isolated elements in reductive
groups}, Comm. Algebra {\bf 33} (2005), 2315--2337.

\bibitem{Bon2006} C. Bonnaf{\'e}, {\em Sur les caract\`eres des groupes r\'eductifs finis \`a centre non connexe: applications aux groupes sp\'eciaux lin\'eaires et unitaires}, Ast{\'e}risque {\bf306} (2006).


\bibitem{BonRou} C.~Bonnaf\'e and R.~Rouquier, \emph{Cat\'egories d\'eriv\'ees et vari\'et\'es
 de Deligne-Lusztig}, Publ. Math. Inst. Hautes \'Etudes Sci. \textbf{97} (2003),
 1--59.

\bibitem{bra64} R.~Brauer, \emph{Some applications of the theory of blocks of characters of finite groups. II}, J. Algebra \textbf{1} (1964), 307--334.

\bibitem{CaEn93} M.~Cabanes and M.~Enguehard,
{\em Unipotent blocks of finite reductive groups of a given type},
Math. Z. {\bf 213} (1993), 479--490.

\bibitem{carter78} R.~W.~Carter, {\em Centralisers of semisimple elements in the finite classical groups} Proc. London Math. Soc. {\bf 42} (1981), 1--41.

\bibitem{atlas} J.~H.~Conway, R.~T.~Curtis, S.~P.~Norton, R.~A.~Parker and
R.~A.~Wilson, \emph{Atlas of Finite Groups}, Oxford University Press, Eynsham, 1985.

\bibitem{CEKL} D.~A.~Craven, C.~W.~Eaton, R.~Kessar and M.~Linckelmann,
{\em The structure of blocks with a Klein four defect group}, Math. Z. \textbf{268} (2011), 441--476.

\bibitem{DiMi} F.~ Digne and J.~ Michel, \emph{Representations of Finite Groups of Lie Type}.
 LMS Student Texts, 21. Cambridge University Press, Cambridge, 1991.

\bibitem{du04} O.~D\"{u}vel, \emph{On Donovan's conjecture}, J. Algebra \textbf{272} (2004), 1--26.

\bibitem{En00} M.~Enguehard, \emph{Sur les $l$-blocs unipotents des groupes r\'eductifs finis
 quand $l$ est mauvais}, J. Algebra \textbf{230} (2000), 334--377.


\bibitem{En08} M. Enguehard, \emph{Vers une d\'ecomposition de Jordan des blocs des groupes r\'eductifs finis}, J. Algebra \textbf{319} (2008), 1035--1115.

\bibitem{er82}
K.~Erdmann, \emph{Blocks whose defect groups are Klein four groups: a correction}, J. Algebra \textbf{76} (1982), 505--518.

\bibitem{gap} The GAP group, GAP - Groups, algorithms, and programming, version 4.4, http://www.gap-system.org, 2005.

\bibitem{GLS3} {D.~Gorenstein, R.~Lyons, and R.~Solomon}, {The classification of finite simple groups. Number 3}, Mathematical Surveys and Monographs
{\bf 40}, American Mathematical Society, Providence, RI, 1998.

\bibitem{hk85} M.~E.~Harris and R.~Kn\"orr, \emph{Brauer correspondence for covering blocks of finite groups}, Comm. Algebra \textbf{13} (1985), 1213-1218.

\bibitem{HidaKoshitani} A.~Hida and S.~Koshitani, \emph{Morita equivalent blocks in non-normal subgroups and $p$-radical blocks in finite groups}, J. London Math. Soc. (2) \textbf{59} (1999), 541--556.

\bibitem{ke04} R.~Kessar, \emph{A remark on Donovan's conjecture}, Arch. Math. \textbf{82} (2004), 391--394.

\bibitem{KKL} R.~Kessar, S.~Koshitani and M.~Linckelmann, \emph{Conjectures of
Alperin and Brou\'e for $2$-blocks with elementary abelian defect groups of order $8$}, J. Reine Angew. Math. \textbf{671} (2012), 85--130.

\bibitem{kl10} R.~Kessar and M.~Linckelmann, \emph{On stable equivalences and blocks with one simple module}, J. Algebra \textbf{323} (2010), 1607--1621.

\bibitem{kessarmalle} R.~Kessar and G.~Malle, \emph{On quasi-isolated blocks and Brauer's height zero conjecture},   Ann. of Math. \textbf{178}   (2013), 321--384 .

\bibitem{kk96} S.~Koshitani and B.~K\"ulshammer, \emph{A splitting theorem for blocks}, Osaka J. Math. \textbf{33} (1996), 343--346.

\bibitem{ku95} B.~K\"ulshammer, \emph{Donovan's conjecture, crossed products and algebraic group actions}, Israel J. Math. \textbf{92} (1995), 295--306.

\bibitem{kp} B.~K\"ulshammer and L.~Puig, \emph{Extensions of nilpotent blocks}, Invent. Math. \textbf{102} (1990), 17--71.

\bibitem{ks13} B.~K\"ulshammer and B.~Sambale, \emph{The $2$-blocks of defect $4$}, Represent. Theory \textbf{17} (2013), 226--236.

\bibitem{lm80} P.~Landrock  and G.~Michler, \emph{ Principal 2-blocks of the simple groups of Ree type}, Trans. Amer. Math. Soc. \textbf{260} (1980), no. 1, 83--111.

\bibitem{lin94} M.~Linckelmann, \emph{The source algebras of blocks with a Klein four defect group}, J. Algebra \textbf{167} (1994), 821--854.

\bibitem{No} F.~Noeske, \emph{ADGC for Sporadic Groups}, \url{http://www.math.rwth-aachen.de/~Felix.Noeske/tabular.pdf}

\bibitem{puig88} L.~Puig, \emph{Nilpotent blocks and their source algebras}, Invent.
 Math. \textbf{93} (1988), 77--116.

\bibitem{pu91} L.~Puig, \emph{Une correspondance de modules pour les blocs \`{a} groupes de d\'{e}faut ab\'{e}li\'{e}ns}, Geom. Dedicata \textbf{37} (1991), 9--43.

\bibitem{pu99} L.~Puig, {On the Local Structure of Morita and Rickard Equivalences between Brauer Blocks}, Birkh\"auser Verlag, Basel, 1999.

\bibitem{pu01} L.~Puig, \emph{Source algebras of $p$-central group extensions}, J. Algebra \textbf{235} (2001), 359--398.

\bibitem{pu10} L.~Puig, \emph{Nilpotent extensions of blocks}, Math. Z. \textbf{269} (2011), 115-136


\bibitem{puigusami95}
L.~Puig and Y.~Usami, \emph{Perfect isometries for blocks with abelian defect groups and cyclic inertial quotients of order $4$}, J. Algebra \textbf{172} (1995), 205--213.

\bibitem{sambalethesis}
B.~Sambale, \emph{$2$-Bl\"{o}cke mit metazyklischen und minimal nichtabelschen Defektgruppen}, PhD thesis, Friedrich-Schiller-Universit\"{a}t Jena, 2010.

\bibitem{Wa94} A.~Watanabe, \emph{On nilpotent blocks of finite groups}, J. Algebra \textbf{163} (1994), 128--134.

\end{thebibliography}
\end{document}